\author{Dilip Raghavan}
\thanks{First author partially supported by National University of Singapore 
research grant number R-146-000-161-133.}
\address{Department of Mathematics \\
National University of Singapore\\
Singapore 119076}
\email{raghavan@math.nus.edu.sg}
\urladdr{http://www.math.toronto.edu/raghavan}
\author{Saharon Shelah}
\date{\today}
\subjclass[2010]{03E50, 03E05, 03E35, 54D80}
\keywords{Rudin-Keisler order, ultrafilter, P-point}
\title[On embedding certain partial orders]{On embedding certain partial orders into the P-points under RK  and Tukey reducibility}
\def\polhk#1{\setbox0=\hbox{#1}{\ooalign{\hidewidth
    \lower1.5ex\hbox{`}\hidewidth\crcr\unhbox0}}}
\newtheorem{Theorem}{Theorem}
\newtheorem{Claim}[Theorem]{Claim}
\newtheorem{Lemma}[Theorem]{Lemma}
\newtheorem{Cor}[Theorem]{Corollary}
\newtheorem{Question}[Theorem]{Question}
\newtheorem*{mth}{Main Theorem}
\theoremstyle{definition}
\newtheorem{Def}[Theorem]{Definition}
\theoremstyle{remark}
\newtheorem{remark}[Theorem]{Remark}
\newcommand{\restrict}{\upharpoonright}
\newcommand{\forallbutfin}{{\forall}^{\infty}}
\newcommand{\existsinf}{{\exists}^{\infty}}
\renewcommand{\c}{\mathfrak{c}}
\newcommand{\p}{{\mathfrak{p}}}
\renewcommand{\[}{\left[}
\renewcommand{\]}{\right]}
\renewcommand{\P}{\mathbb{P}}
\newcommand{\Q}{\mathbb{Q}}
\newcommand{\lc}{\left|}
\newcommand{\rc}{\right|}
\newcommand\ZFC{\mathrm{ZFC}}
\newcommand\FIN{\mathrm{FIN}}
\newcommand\MA{\mathrm{MA}}
\newcommand\PFA{\mathrm{PFA}} 
\newcommand\CH{\mathrm{CH}}
\newcommand{\BS}{{\omega}^{\omega}}
\DeclareMathOperator{\cre}{\mathrm{\mathcal{C}\mathcal{R}}}
\DeclareMathOperator{\nor}{nor}
\DeclareMathOperator{\cf}{cf}
\newcommand{\Pset}{\mathcal{P}}
\newcommand{\BB}{\mathcal{B}}
\newcommand{\A}{{\mathscr{A}}}
\newcommand{\C}{{\mathscr{C}}}
\newcommand{\D}{{\mathscr{D}}}
\newcommand{\DD}{\mathcal{D}}
\newcommand{\GG}{{\mathcal{G}}}
\newcommand{\U}{{\mathcal{U}}}
\newcommand{\VV}{{\mathcal{V}}}
\newcommand{\cube}{{\[\omega\]}^{\omega}}
\newcommand{\F}{{\mathcal{F}}}
\newcommand{\T}{{\mathcal{T}}}
\newcommand{\Ra}{\mathbb{R}}
\newcommand{\XX}{\mathscr{X}}
\newcommand{\YY}{\mathscr{Y}}
\begin{document}
\begin{abstract}
The study of the global structure of ultrafilters on the natural numbers with respect to the quasi-orders of Rudin-Keisler and Rudin-Blass reducibility was initiated in the 1970s by Blass, Keisler, Kunen, and Rudin.
In a 1973 paper Blass studied the special class of P-points under the quasi-ordering of Rudin-Keisler reducibility.
He asked what partially ordered
sets can be embedded into the P-points when the P-points are equipped with this ordering.
This question is of most interest under some hypothesis that guarantees the existence of many P-points, such as Martin's axiom for
$\sigma$-centered posets. 
In his 1973 paper he showed under this assumption that both ${\omega}_{1}$ and the reals can be embedded.
This result was later repeated for the coarser notion of Tukey reducibility.
We prove in this paper that Martin's axiom for $\sigma$-centered posets implies that every partial order of size at most continuum can be embedded into the P-points both under Rudin-Keisler and Tukey reducibility.
\end{abstract}
\maketitle
\section{Introduction} \label{sec:intro}
The analysis of various quasi-orders on the class of all ultrafilters on $\omega$ provides a great deal of information about the global structure of this class.
An early example of such global information was the proof that $\beta \omega \setminus \omega$ is not homogeneous, obtained through an analysis of what later became known as the Rudin-Frol{\'{\i}}k order (see \cite{betannothomo}).
This ordering and the weaker Rudin-Keisler ordering were analyzed in \cite{maryellenorder} to obtain more information about the topological types in $\beta\omega \setminus \omega$.
An analysis of the stronger Rudin-Blass order eventually led to the isolation of the principle of near coherence of filters, a principle which postulates a kind of global compatibility between ultrafilters on $\omega$, and has applications to diverse areas of mathematics (see \cite{originalncf1, originalncf2, originalncf3}).
Larson~\cite{medialpaul} is a recent application of a slightly stronger principle than near coherence to measure theory. 
Recall the following definitions:
\begin{Def} \label{def:rkandrb}
Let $\F$ be a filter on a set $X$ and $\GG$ a filter on a set $Y$.
We say that $\F$ is \emph{Rudin-Keisler (RK) reducible to} $\GG$ or
\emph{Rudin-Keisler(RK) below} $\GG$, and we write $ \F \; {\leq}_{RK} \; \GG$, if there is a map $f: Y \rightarrow X$ such that for each $a \subset X$, $a \in \F$ iff ${f}^{-1}(a) \in \GG$. $\F$ and $\GG$ are \emph{RK equivalent}, written $\F \; {\equiv}_{RK} \; \GG$, if $\F \; {\leq}_{RK} \; \GG$ and $\GG \; {\leq}_{RK} \; \F$.

We say that $\F$ is \emph{Rudin-Blass (RB) reducible to} $\GG$ or \emph{Rudin-Blass (RB) below} $\GG$, and we write $ \F \; {\leq}_{RB} \; \GG$, if there is a finite-to-one map $f: Y \rightarrow X$ such that for each $a \subset X$, $a \in \F$ iff ${f}^{-1}(a) \in \GG$. RB equivalence is defined analogously to RK equivalence.
\end{Def}
In this paper we restrict ourselves only to ultrafilters on $\omega$.
If $\F$ and $\GG$ are ultrafilters on $\omega$, then $\F \; {\equiv}_{RK} \; \GG$ if and only if there is a permutation $f:\omega\to\omega$ such that
$\F = \{a\subset\omega:f^{-1}(a)\in \GG\}$.  For this reason, ultrafilters that are RK equivalent are sometimes said to be \emph{(RK) isomorphic}.
If $f: \omega \rightarrow \omega$ is a function such that $\forall b \in \GG \[f''b \in \F\]$, then in the case when $\F$ and $\GG$ are ultrafilters on $\omega$, $f$ already witnesses that $\F \; {\leq}_{RK} \; \GG$. 
 
Kunen~\cite{rknonlinear} was the first to construct two ultrafilters $\U$ and $\VV$ on $\omega$ such that $\VV \; {\nleqslant}_{RK} \; \U$ and $\U \; {\nleqslant}_{RK} \; \VV$ using only the axioms of $\ZFC$.
His techniques actually showed in $\ZFC$ alone that the class of ultrafilters on $\omega$ has a fairly complicated structure with respect to the ordering ${\leq}_{RK}$.

It is also well-known that certain special classes of ultrafilters can be characterized using the Rudin-Keisler order.
Recall the following notions.
\begin{Def} \label{def:selectiveandppoint}
An ultrafilter $\U$ on $\omega$ is \emph{selective} if, for every function $f:\omega\to\omega$, there is a set $A \in \U$ on which $f$ is either one-to-one or constant.
$\U$ is called a \emph{P-point} if, for every $f:\omega\to\omega$, there is $A \in \U$ on which $f$ is finite-to-one or constant.
\end{Def}
It is easy to see that an ultrafilter $\U$ on $\omega$ is a P-point iff for any collection $\{{a}_{n}: n \in \omega\}$ there exists $a \in \U$ such that $\forall n \in \omega\[a \; {\subset}^{\ast} \; {a}_{n}\]$.
Here ${\subset}^{\ast}$ denotes the relation of containment modulo a finite set: $a \; {\subset}^{\ast} \; b$ iff $a \setminus b$ is finite.
Selective ultrafilters are minimal in the Rudin-Keisler ordering, meaning that any ultrafilter that is RK below a selective ultrafilter is RK equivalent to that selective ultrafilter.
This minimality in fact characterizes the selective ultrafilters.
P-points are minimal in the Rudin-Frol{\'{\i}}k.
Observe that ${\leq}_{RK}$ and ${\leq}_{RB}$ coincide for the class of P-points.

Rudin~\cite{walterppoints} proved in 1956 that P-points exist if the Continuum Hypothesis ($\CH$ henceforth) is assumed, and he used this to show that $\CH$ implies the non-homogeneity of $\beta\omega\setminus\omega$.
P-points were also independently considered by several other people in a more model-theoretic context.
The question of whether P-points always exist was settled in a landmark paper of Shelah in 1977 (see \cite{PIF}), where the consistency of their non-existence was proved.

Blass considered the structure of the class of P-points in \cite{blassrk} with respect to the Rudin-Keisler order.
As the existence of P-points is independent of $\ZFC$, it makes sense to consider this structure only when some hypothesis that allows us to build P-points with ease is in hand.
If this hypothesis is relatively mild and moreover has the status of a ``quasi-axiom'', then it may be considered the ``right axiom'' under which to investigate the class of P-points.
In \cite{blassrk}, Blass used Martin's axiom for $\sigma$-centered posets.
Recall that a subset $X$ of a forcing notion $\P$ is \emph{centered} if any finitely many elements of $X$ have a lower bound in $\P$.
A forcing notion $\P$ is called \emph{$\sigma$-centered} if $\P = {\bigcup}_{n \in \omega} {\P}_{n}$, where each ${\P}_{n}$ is centered.
\emph{Martin's axiom for $\sigma$-centered posets}, denoted \emph{$\MA(\sigma-\textrm{centered})$}, is the following statement: for every $\sigma$-centered poset $\P$ and every collection $\XX$ of fewer than $\c = {2}^{{\aleph}_{0}}$ many dense subsets of $\P$, there is a filter $G \subset \P$ such that $\forall D \in \XX\[G \cap D \neq 0\]$.
$\MA(\sigma-\textrm{centered})$ is a mild hypothesis; it is implied both by $\CH$ and by forcing axioms such as the Proper Forcing Axiom ($\PFA$).
It has some status as a ``quasi-axiom'' because it is a forcing axiom for a class of very well-behaved posets, and last but not least, it allows us to build P-points in a generic manner.
For these reasons it is arguable that $\MA(\sigma-\textrm{centered})$ is the right axiom under which to study the global structure of the P-points.

We should point out that $\MA(\sigma-\textrm{centered})$ is equivalent to the statement that $\p = \c$.
A family $F \subset \cube$ is said to have the \emph{finite intersection property (FIP)} if for any ${a}_{0}, \dotsc, {a}_{k} \in F$, ${a}_{0} \cap \dotsb \cap {a}_{k}$ is infinite.
$\p$  is the minimal cardinal $\kappa$ such that there is a family $F \subset \cube$ of size $\kappa$ with the FIP, but for which there is no $b \in \cube$ such that $\forall a \in F\[b \; {\subset}^{\ast} \; a\]$.

Among other results, Blass~\cite{blassrk} showed that $\MA(\sigma-\textrm{centered})$ implies that both ${\omega}_{1}$ and $\Ra$ (the real numbers ordered as usual) can be embedded into the P-points under the Rudin-Keisler ordering.
He posed the following question in his paper\footnote{Question 4 of \cite{blassrk} asks explicitly only about ordinals; but given the other results in that paper, the more general question is implicit.}:
\begin{Question}[Blass, 1973] \label{q:mainq}
 Assuming $\MA(\sigma-centered)$, what partial orders can be embedded into the P-points with respect to the Rudin-Keisler ordering?
\end{Question}
Some of Blass' results from \cite{blassrk} were reproved much later for the case of Tukey reducibility of ultrafilters.
The general notion of Tukey reducibility between directed quasi-orders arose with the Moore-Smith theory of convergence in topological spaces.
We say that a quasi-order $\langle D, \leq \rangle$ is \emph{directed} if any two members of $D$ have an upper bound in $D$.
A set $X \subset D$ is \emph{unbounded in $D$} if it doesn't have an upper bound in $D$.
A set $X \subset D$ is said to be \emph{cofinal in $D$} if $\forall y \in D\, \exists x \in X \left[y \leq x \right]$.
Given directed sets $D$ and $E$, a map $f: D \rightarrow E$ is called a \emph{Tukey map} if the image of every unbounded subset of $D$ is unbounded in $E$.
A map $g: E \rightarrow D$ is called a
\emph{convergent map} if the image of every cofinal subset of $E$ is cofinal in $D$.
It is not difficult to show that there is a Tukey map $f: D \rightarrow E$ if and only if there is a convergent $g: E \rightarrow D$.
\begin{Def} \label{def:tukeyred}
We say that $D$ is \emph{Tukey reducible} to $E$, and we write $D \; {\leq}_{T} \; E$ if there is a convergent map $g: E \rightarrow D$.
We say that $D$ and $E$ are \emph{Tukey equivalent} or have the same \emph{cofinal type} if both $D \; {\leq}_{T} \; E$ and $E \; {\leq}_{T} \; D$ hold.
\end{Def}
The topological significance of these notions is that if $ D \; {\leq}_{T} \; E$, then any $D$-net on a topological space contains an $E$-subnet. 

If $\U$ is any ultrafilter on $\omega$, then $\langle \U, \supset \rangle$ is a directed set.
When ultrafilters are viewed as directed sets in this way, Tukey reducibility is a coarser quasi order than RK reducibility.
In other words, if $\U \; {\leq}_{RK} \; \VV$, then $\U \; {\leq}_{T} \; \VV$.
In contrast with Kunen's theorem discussed above it is unknown whether it is possible to construct two ultrafilters on $\omega$ that not Tukey equivalent using only $\ZFC$.
For any $\XX, \YY \subset \Pset(\omega)$, a map $\phi: \XX \rightarrow \YY$ is said to be \emph{monotone} if $\forall a, b \in \XX\[a \subset b \implies \phi(a) \subset \phi(b)\]$, and $\phi$ is said to be \emph{cofinal in $\YY$} if $\forall b \in \YY \exists a \in \XX \[\phi(a) \subset b\]$.
It is a useful and easy fact that if $\U$ and $\VV$ are ultrafilters on $\omega$, then $\U \; {\leq}_{T} \; \VV$ iff there exists a $\phi: \VV \rightarrow \U$ that is monotone and cofinal in $\U$.

The order ${\leq}_{T}$ on the class of ultrafilters and particularly on the class of P-points has been studied recently in \cite{milovich}, \cite{tukey}, and \cite{dt}.
Dobrinen and Todorcevic~\cite{dt} showed that ${\omega}_{1}$ can be embedded into the P-points under the Tukey order, and Raghavan (unpublished) showed the same for $\Ra$.
These results rely on the fact, discovered by Dobrinen and Todorcevic~\cite{dt}, that if $\U$ and $\VV$ are P-points and $\U \; {\leq}_{T} \; \VV$, then there is always a continuous monotone map $\phi: \Pset(\omega) \rightarrow \Pset(\omega)$ such that $\phi \restrict \VV: \VV \rightarrow \U$ is cofinal in $\U$.
We will need a refinement of this fact for our construction in this paper.
This refinement will be proved in Lemma \ref{lem:localmap}.

These results of Dobrinen and Todorcevic~\cite{dt} and Raghavan rework Blass' arguments from \cite{blassrk} in the context of the Tukey ordering, and motivate us to ask the analogue of Question \ref{q:mainq} for this ordering also.
The main aim of this paper is to treat Question \ref{q:mainq} as well as its Tukey analogue.
We will prove the following theorem.
\begin{mth} \label{thm:mth}
 Assume $\MA(\sigma-\textrm{centered})$.
 Then there is a sequence of P-points $\langle {\U}_{\[a\]}: \[a\] \in \Pset(\omega) \slash \FIN \rangle$ such that
 \begin{enumerate}
  \item
  if $a \; {\subset}^{\ast} \; b$, then ${\U}_{\[a\]} \; {\leq}_{RK} \; {\U}_{\[b\]}$;
  \item
  if $b \; {\not\subset}^{\ast} \; a$, then ${\U}_{\[b\]} \; {\not\leq}_{T} \; {\U}_{\[a\]}$.
 \end{enumerate}
\end{mth}
Here $\FIN$ is the ideal of finite sets in the Boolean algebra $\Pset(\omega)$, and $\Pset(\omega) \slash \FIN$ is the quotient algebra.
For each $a \in \Pset(\omega)$, $\[a\]$ denotes the equivalence class of $a$ in $\Pset(\omega) \slash \FIN$.
Thus the theorem says that $\Pset(\omega) \slash \FIN$ with its natural partial order embeds into the class of P-points with respect to both Rudin-Keisler and Tukey reducibility.
It is well-known that every partial order of size at most $\c$ can be embedded into $\Pset(\omega) \slash \FIN$.
\begin{Cor} \label{cor:maincor}
 Under $\MA(\sigma-\textrm{centered})$ any partial order of size at most $\c$ embeds into the P-points both under RK and Tukey reducibility.
\end{Cor}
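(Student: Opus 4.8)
The plan is to deduce the corollary formally from the Main Theorem together with the classical fact, recalled just above, that $\langle \Pset(\omega)/\FIN, {\subset}^{\ast} \rangle$ is universal for partial orders of size at most $\c$. Fix a partial order $\langle P, \preceq \rangle$ with $|P| \le \c$. First I would invoke that universality to fix an order embedding $e : P \to \Pset(\omega)/\FIN$; that is, an injection with the property that $p \preceq q$ if and only if $e(p) \le e(q)$, where $[a] \le [b]$ abbreviates $a \; {\subset}^{\ast} \; b$. Next I would take the sequence $\langle \U_{[a]} : [a] \in \Pset(\omega)/\FIN \rangle$ of P-points provided by the Main Theorem and define $\Phi : P \to \{\text{P-points}\}$ by $\Phi(p) = \U_{e(p)}$. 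The claim is that this single map $\Phi$ embeds $\langle P, \preceq \rangle$ into the P-points simultaneously under ${\leq}_{RK}$ and under ${\leq}_{T}$.

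The verification amounts to checking, for $e(p) = [a]$ and $e(q) = [b]$, the chain of equivalences $p \preceq q \iff \U_{[a]} \; {\leq}_{RK} \; \U_{[b]} \iff \U_{[a]} \; {\leq}_{T} \; \U_{[b]}$. For the forward implications, suppose $p \preceq q$; then $a \; {\subset}^{\ast} \; b$, so clause (1) of the Main Theorem gives $\U_{[a]} \; {\leq}_{RK} \; \U_{[b]}$, and since RK reducibility implies Tukey reducibility (as noted in the introduction) also $\U_{[a]} \; {\leq}_{T} \; \U_{[b]}$. For the reverse direction I argue contrapositively. Note that clause (2), stated for arbitrary sets, reads: whenever $x \; {\not\subset}^{\ast} \; y$ one has $\U_{[x]} \; {\not\leq}_{T} \; \U_{[y]}$. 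So if $p \not\preceq q$, then $a \; {\not\subset}^{\ast} \; b$, and taking $x = a$ and $y = b$ yields $\U_{[a]} \; {\not\leq}_{T} \; \U_{[b]}$; invoking once more that RK reducibility implies Tukey reducibility, this also gives $\U_{[a]} \; {\not\leq}_{RK} \; \U_{[b]}$. Both biconditionals therefore hold, so $\Phi$ is an order embedding for each of the two reducibilities at once.

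Finally, since $e$ is injective and reflects order, the same application of clause (2) shows that $\preceq$-incomparable elements of $P$ are carried by $\Phi$ to P-points that are Tukey-incomparable, hence RK-incomparable and in particular neither RK- nor Tukey-equivalent; thus $\Phi$ is a genuine embedding rather than a mere monotone map, and it is injective as a map of sets. I do not expect any real obstacle in this deduction, as all the work has been placed into the Main Theorem and into the universality of $\Pset(\omega)/\FIN$. The only point demanding a little care is the reverse direction: one must instantiate the two set-variables of clause (2) in the correct order and then use the implication from RK to Tukey reducibility to pull the Tukey non-reduction back to a non-reduction in the RK order.
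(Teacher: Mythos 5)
Your proposal is correct and is exactly the deduction the paper intends: compose a fixed order embedding of $\langle P, \preceq \rangle$ into $\Pset(\omega)/\FIN$ (the universality fact the paper cites as well known) with the sequence $\langle \U_{\[a\]} \rangle$ from the Main Theorem, using clause (1) for the forward implications and clause (2) (after renaming the variables, as you note) together with the fact that RK reducibility implies Tukey reducibility for the reverse ones. The paper leaves this verification implicit, and your write-up supplies it without any gap.
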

As far as we are aware, Corollary \ref{cor:maincor} is the first new piece of information on Question \ref{q:mainq} since Blass' work in \cite{blassrk}.
Since there are only $\c$ many functions from $\omega$ to $\omega$ and also only $\c$ many continuous functions from $\Pset(\omega)$ to $\Pset(\omega)$, any given P-point can have at most $\c$ many ultrafilters below it both with respect to RK and Tukey reducibility.
Therefore Corollary \ref{cor:maincor} is the best possible result for partial orders having a greatest element.
However it does not settle which partial orders of size greater than $\c$ can be embedded into the P-points (see Section \ref{sec:questions} for further discussion of what remains open).

Theorem \ref{thm:main} is proved using the technique of normed creatures pioneered by Shelah and his coauthors.
While this method is usually used for getting consistency results in set theory of the reals (see \cite{creaturebook}), it is a flexible method that can also be used for carrying out constructions from forcing axioms.
The method we develop in this paper for building ultrafilters is likely to be applicable to questions that ask whether certain classes of P-points can be distinguished from each other.
For instance, the questions posed at the end of \cite{equinumerosity} about interval P-points are likely to be amenable to our methods.
We can also modify the methods in this paper to shed a bit more light on Blass' original Question \ref{q:mainq}.
We have been able to prove the following theorem, which will be exposed in a future publication.
\begin{Theorem} \label{thm:futurepaper}
 Assume $\MA(\sigma-\textrm{centered})$.
 The ordinal ${\c}^{+}$ can be embedded into the P-points both under RK and Tukey reducibility.
\end{Theorem}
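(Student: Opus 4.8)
The plan is to build, by transfinite recursion on $\alpha < \c^{+}$, a chain of P-points $\langle {\U}_{\alpha} : \alpha < \c^{+} \rangle$ that is strictly increasing under both $\leq_{RK}$ and $\leq_{T}$. First observe that the Main Theorem (Theorem \ref{thm:mth}) cannot be invoked directly: its index set $\Pset(\omega)/\FIN$ has size $\c$, so every chain it produces has length an ordinal of cardinality at most $\c$, hence strictly below $\c^{+}$. Observe next that the counting remark following Corollary \ref{cor:maincor} --- that any fixed P-point has at most $\c$ many P-points $\leq_{RK}$ or $\leq_{T}$ below it --- poses no obstruction: in a chain of length $\c^{+}$ each ${\U}_{\alpha}$ has only $|\alpha| \leq \c$ predecessors, so no single ultrafilter is asked to dominate more than $\c$ others. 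Thus the length $\c^{+}$ is consistent with the counting bound, and the genuine task is to carry out a recursion of length $\c^{+}$ that adds one P-point at a time.

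At stage $\alpha$ I would maintain a coherent system of Rudin--Keisler projections: for $\beta < \gamma < \alpha$, maps $f_{\gamma\beta} : \omega \to \omega$ witnessing ${\U}_{\beta} \leq_{RK} {\U}_{\gamma}$ with $f_{\gamma\beta} \circ f_{\delta\gamma} = f_{\delta\beta}$ modulo ${\U}_{\delta}$, together with the inductive hypothesis that ${\U}_{\gamma} \not\leq_{T} {\U}_{\beta}$. To construct ${\U}_{\alpha}$ I must meet three families of requirements: (i) that ${\U}_{\alpha}$ be a P-point; (ii) that ${\U}_{\alpha}$ project onto every ${\U}_{\beta}$ with $\beta < \alpha$ via maps cohering with the $f_{\gamma\beta}$; and (iii) that ${\U}_{\alpha} \not\leq_{T} {\U}_{\beta}$ for every $\beta < \alpha$. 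For the negative requirements (iii) I would use the refinement in Lemma \ref{lem:localmap} of the Dobrinen--Todorcevic fact: any Tukey reduction of ${\U}_{\beta}$ to ${\U}_{\alpha}$ is witnessed by a continuous monotone $\phi : \Pset(\omega) \to \Pset(\omega)$ with $\phi \restrict {\U}_{\alpha}$ cofinal in ${\U}_{\beta}$. Since there are only $\c$ such maps and only $|\alpha| \leq \c$ many $\beta < \alpha$, the requirements (i)--(iii) number at most $\c$ in all. I would then meet them by an inner normed-creature recursion of length $\c$, at each step invoking $\MA(\sigma-\textrm{centered})$ (equivalently $\p = \c$) to extend the current approximation past the next requirement while preserving positive norm, exactly as in the proof of Theorem \ref{thm:mth} but with the fixed base $\Pset(\omega)/\FIN$ replaced by the chain built so far. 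The strictness needed for an embedding then falls out of (ii) and (iii): ${\U}_{\beta} \leq_{RK} {\U}_{\alpha}$ gives ${\U}_{\beta} \leq_{T} {\U}_{\alpha}$, while ${\U}_{\alpha} \not\leq_{T} {\U}_{\beta}$ rules out equivalence, so $\beta < \alpha$ yields both ${\U}_{\beta} <_{RK} {\U}_{\alpha}$ and ${\U}_{\beta} <_{T} {\U}_{\alpha}$.

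The hard part will be the limit stages $\alpha$ of cofinality $\c$. For limits of smaller cofinality it suffices to sit $\leq_{RK}$-above a cofinal subchain, since $\leq_{RK}$ propagates along the chain; but when $\cf(\alpha) = \c$ I must produce a single P-point ${\U}_{\alpha}$ sitting $\leq_{RK}$-above an entire $\leq_{RK}$-increasing chain of $\c$ many P-points, with a coherent projection onto each, and simultaneously defeat all $\c$ candidate Tukey maps into every earlier ${\U}_{\beta}$. Upper bounds for $\leq_{RK}$-chains of length $\c$ are not automatic, so the creature machinery must do real work here: the conditions must approximate ${\U}_{\alpha}$ together with its $\c$ projection maps at once, and the fusion argument must interleave the coherence demands of (ii) with the diagonalizations of (iii) without the norms collapsing. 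Managing this interaction --- retaining enough norm to continue while honoring a cofinal family of projection constraints at a cofinality-$\c$ limit --- is the crux, and is precisely where the flexibility of the normed-creature method, rather than a direct appeal to the Main Theorem, becomes indispensable.
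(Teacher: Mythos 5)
First, a point of comparison that is really a non-comparison: the paper contains no proof of Theorem \ref{thm:futurepaper}. It is announced as a result ``which will be exposed in a future publication,'' so there is no argument in this paper to measure yours against. Judged on its own terms, your proposal is an accurate assessment of the shape of the problem --- the Main Theorem cannot be applied because every chain in $\Pset(\omega)\slash\FIN$ has length $<{\c}^{+}$, the counting bound is not violated, and the recursion must add one P-point at a time with coherent projections --- but it is a plan rather than a proof, and the step you yourself flag as ``the crux'' is precisely the entire mathematical content of the theorem and is left unaddressed.

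Concretely, three things prevent ``exactly as in the proof of Theorem \ref{thm:main} but with the base replaced by the chain built so far'' from being a proof. (a) Every component of the paper's framework is calibrated to a construction of total length $\c$: conditions in ${\P}_{0}$ require $\lc {\A}_{p} \rc < \c$ and filters ${\DD}_{p,A}$ with bases ${\F}_{p,A}$ of size $< \c$, and the Representation Lemma meets only $< \c$ many dense sets. In a recursion of length ${\c}^{+}$ the ultrafilters with index $< \c$ acquire bases of size $\c$ (indeed become ultrafilters) long before the recursion ends, at which point the Representation Lemma, and with it every density argument, fails as stated; some new framework (e.g.\ creatures acting on finite suborders of the chain) is needed, and its norm and shift lemmas must be re-proved. (b) Lemma \ref{lem:killingtukey} defeats a candidate Tukey reduction by shrinking the creature approximations to \emph{both} ultrafilters simultaneously; once the lower ultrafilter ${\U}_{\beta}$ is complete you may only \emph{choose} $X \in {\U}_{\beta}$, not build it, and you give no reason a suitable $X$ exists. (A minor slip in the same place: to kill ${\U}_{\alpha} \; {\leq}_{T} \; {\U}_{\beta}$ you must enumerate monotone maps $\phi: {\U}_{\beta} \rightarrow {\U}_{\alpha}$ cofinal in ${\U}_{\alpha}$, i.e.\ witnesses that ${\U}_{\alpha}$ reduces to ${\U}_{\beta}$; you state the reduction in the opposite direction.) (c) At a limit of cofinality $\c$ --- already at stage $\c$ itself --- you must produce one P-point lying RK-above a completed, coherently projecting RK-chain of $\c$ many P-points while diagonalizing against all downward Tukey reductions. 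Nothing in the paper produces an RK-upper bound for $\c$ many completed P-points; Lemmas \ref{lem:cofomegalimitcondition} and \ref{lem:unccoflimitcondition} concern lower bounds for $<\c$-sequences of \emph{conditions}, a far easier task. Saying that ``the creature machinery must do real work here'' and that the fusion ``must interleave the coherence demands with the diagonalizations without the norms collapsing'' names the gap; it does not fill it.
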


We end this introduction by fixing some notational conventions that will apply to the entire paper.
$A \subset B$ iff $\forall x\[x \in A \implies x \in B\]$, so the symbol ``$\subset$'' does not denote proper subset.
``$\forallbutfin x$ \ldots'' abbreviates the quantifier ``for all but finitely many $x$ \ldots'' and ``$\existsinf x$ \ldots'' stands for ``there exist infinitely many $x$ such that \dots''.
Given sets $X$ and $Y$, ${X}^{Y}$ denotes the collection of all functions from $Y$ to $X$.
Given a set $a$, $\Pset(a)$ denotes the power set of $a$.
$\cube$ refers to the collection of all infinite subsets of $\omega$, and ${\[\omega\]}^{< \omega}$ is the collection of all finite subsets of $\omega$.
A filter $\F$ on $\omega$ is required to be both \emph{proper}, meaning $0 \notin \F$, and \emph{non-principal}, meaning that $\forall F \in {\[\omega\]}^{< \omega}\[\omega \setminus F \in \F\]$.
Finally $A \; {\subset}^{\ast} \; B$ means $A \setminus B$ is finite and $A \; {=}^{\ast} \; B$ means $A \; {\subset}^{\ast} \; B$ and $B \; {\subset}^{\ast} \; A$.
\section{The construction} \label{sec:main}
We will build a set of ultrafilters $\{{\U}_{A}: A \in \XX\}$, where $\XX$ is some set of representatives for $\Pset(\omega)/\FIN$.
We will also build a corresponding set of maps in $\BS$, $\{{\pi}_{B, A}: A, B \in \XX \wedge A \; {\subset}^{\ast} \; B\}$, ensuring that if $A \; {\subset}^{\ast} \; B$ are any two members of $\XX$, then ${\pi}_{B, A}$ is an RK-map from ${\U}_{B}$ to ${\U}_{A}$.
We first define the notion of a creature needed for the construction and establish its most important properties.
\begin{Def} \label{def:creature}
 Let $A$ be a non-empty finite set.
 Say that $u$ is a \emph{creature acting on $A$} if $u$ is a pair of sequences $\langle \langle {u}_{a}: a \subset A \rangle, \langle {\pi}_{u, b, a}: a \subset b \subset A \rangle \rangle$ such that the following things hold:
 \begin{enumerate}
  \item
  each ${u}_{a}$ is a non-empty finite set;
  \item
  ${\pi}_{u, b, a}: {u}_{b} \rightarrow {u}_{a}$ is an onto function;
  \item
  if $a \subset b \subset c$, then ${\pi}_{u, c, a} = {\pi}_{u, b, a} \circ {\pi}_{u, c, b}$. 
 \end{enumerate}
 The collection of all creatures acting on $A$ is denoted $\cre(A)$.
 Strictly speaking of course $\cre(A)$ is a proper class, but we may restrict ourselves to the ones in $H(\omega)$. 
\end{Def}
The idea of this definition is that $u$ acts on the finite bit of information available to it to produce approximations to sets that will end up in various ultrafilters and also approximations to various RK maps.
More explicitly, if $X \in \Pset(\omega)$ and $A$ is some appropriately chosen finite set, then ${u}_{X \cap A}$ is an approximation to some set in the ultrafilter ${\U}_{X}$.
Similarly if $X \; {\subset}^{\ast} \; Y$ and if $X \cap A \subset Y \cap A$, then ${\pi}_{u, Y \cap A, X \cap A}$ approximates the RK map ${\pi}_{Y, X}$.
\begin{Def} \label{Def:sigma}
For a non-empty finite set $A$ and $u \in \cre(A)$, $\Sigma(u)$ denotes the collection of all $v \in \cre(A)$ such that:
\begin{enumerate}
 \item
 for each $a \subset A$, ${v}_{a} \subset {u}_{a}$;
 \item
 for each $a \subset b \subset A$, ${\pi}_{v, b, a} = {\pi}_{u, b, a} \restrict {v}_{b}$.
\end{enumerate}
\end{Def}
Note that if $v \in \Sigma(u)$, then $\Sigma(v) \subset \Sigma(u)$.
\begin{Def} \label{def:norm}
 For a non-empty finite set $A$, define the \emph{norm} of $u \in \cre(A)$, denoted $\nor(u)$, as follows.
 We first define by induction on $n \in \omega$, the relation $\nor(u) \geq n$ by the following clauses:
 \begin{enumerate}
  \item 
  $\nor(u) \geq 0$ always holds;
  \item
  $\nor(u) \geq n + 1$ iff 
  \begin{enumerate}
   \item[(a)]
   for each $a \subset A$, if ${u}_{a} = {u}^{0} \cup {u}^{1}$, then there exist $v \in \Sigma(u)$ and $i \in 2$ such that $\nor(v) \geq n$ and ${v}_{a} \subset {u}^{i}$; 
   \item[(b)]
   for any $a, b \subset A$, if $b \not\subset a$, then for every function $F: \Pset({u}_{a}) \rightarrow {u}_{b}$, there exists $v \in \Sigma(u)$ such that $\nor(v) \geq n$ and $F''\Pset({v}_{a}) \cap {v}_{b} = 0$.
  \end{enumerate}
 \end{enumerate}
 Define $\nor(u) = \max\{n \in \omega: \nor(u) \geq n\}$.
\end{Def}
It is easily seen that if $u \in \cre(A)$, $v \in \Sigma(u)$, and $\nor(v) \geq n$, then $\nor(u) \geq n$ as well.
It follows that for any $u \in \cre(A)$ if $\nor(u) \geq k$, then for all $n \leq k$, $\nor(u) \geq n$.
Because of the requirement that both $A$ and ${u}_{a}$ be non-empty, $\nor(u)$ is well-defined for every $u \in \cre(A)$.
To elaborate, if $k \in \omega$, $u \in \cre(A)$, and $\nor(u) \geq k + 1$, then since $0, A \subset A$, and $A \neq 0$, clause (2b) applies to $0$ and $A$.
By definition ${u}_{A} \neq 0$; fix ${x}_{0} \in {u}_{A}$.
Define a function $F: \Pset({u}_{0}) \rightarrow {u}_{A}$ by stipulating that $F(y) = {x}_{0}$, for every $y \in \Pset({u}_{0})$.
By (2b) there exists $v \in \Sigma(u)$ such that $\nor(v) \geq k$ and $F''\Pset({v}_{0}) \cap {v}_{A} = 0$.
Thus ${x}_{0} \notin {v}_{A}$ because ${x}_{0} \in F''\Pset({v}_{0})$.
As ${v}_{A} \neq 0$, we can choose ${x}_{1} \in {v}_{A}$.
Then ${x}_{0}, {x}_{1} \in {u}_{A}$ and ${x}_{1} \neq {x}_{0}$.
So we conclude that $\lc {u}_{A} \rc \geq 2$, if $\nor(u) \geq k + 1$.
Next, using this fact and clause (2a), a straightforward induction on $k \in \omega$ shows that for any $u \in \cre(A)$, if $\nor(u) \geq k$, then $\lc {u}_{A} \rc \geq k$.
This shows that $\nor(u)$ is well-defined.
Clause 2(a) ensures that we can construct ultrafilters, while clause 2(b) is needed to ensure that if $X, Y \in \XX$ and $Y \; {\not\subset}^{\ast}\; X$, then ${\U}_{Y} \; {\not\leq}_{T} \; {\U}_{X}$.  

The next lemma is a special case of a much more general theorem.
It is a Ramsey type theorem for a finite product of finite sets.
We only prove the special case which we use.
See \cite{creaturebook}, \cite{carlosstevo}, and \cite{jindraprodramsey} for far-reaching generalizations of this lemma.
\begin{Lemma} \label{lem:norm}
 For each $n < \omega$, for each $0 < l < \omega$, and for each $k < l$, there exists $0 < i(n, l, k) < \omega$ such that:
 \begin{enumerate}
  \item
  for each $n \in \omega$, $0 < l < \omega$, and $0 < m \leq l$, if $\langle {F}_{k}: k < m \rangle$ is a sequence of sets such that $\forall k < m\[\lc {F}_{k} \rc = i(n + 1, l, k)\]$ and if $\displaystyle\prod_{k < m}{F}_{k} = {X}_{0} \cup {X}_{1}$, then there exist $j \in 2$ and a sequence $\langle {E}_{k}: k < m \rangle$ such that $\forall k < m\[{E}_{k} \subset {F}_{k} \wedge \lc {E}_{k} \rc = i(n, l, k)\]$ and $\left(\displaystyle\prod_{k < m}{E}_{k} \right) \subset {X}_{j}$.
  \item
  for each $n < \omega$, each $0 < l < \omega$, and each $k < l$, $i(n + 1, l, k) \geq {2}^{x(n, l)} + i(n, l, k)$, where $x(n, l) = \displaystyle\prod_{k < l}i(n, l, k)$.
 \end{enumerate}
\end{Lemma}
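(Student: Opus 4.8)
The plan is to derive the lemma from a finite product version of the pigeonhole principle, proved by reducing one coordinate at a time. The key sub-statement I would isolate is this: given $m \geq 1$, target sizes $t_0, \dots, t_{m-1} \geq 1$, and finite sets $F_0, \dots, F_{m-1}$ satisfying $|F_k| \geq (t_k-1)\,2^{\prod_{k<j<m}|F_j|}+1$ for every $k < m$, any $2$-coloring of $\prod_{k<m}F_k$ admits sets $E_k \subseteq F_k$ with $|E_k| = t_k$ such that $\prod_{k<m}E_k$ is monochromatic. I would first observe that a cover $\prod_{k<m}F_k = X_0 \cup X_1$ is just a $2$-coloring (color each tuple by the least $i$ with the tuple in $X_i$), and that a monochromatic box then lies inside a single $X_j$; so the sub-statement immediately yields the conclusion of clause (1).

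To prove the sub-statement I would reduce coordinate $0$ first. Regarding the coloring $c$ as a map $\Phi : F_0 \to 2^{\prod_{0<j<m}F_j}$ sending $a$ to the slice $\vec{y} \mapsto c(a,\vec{y})$, one gets a coloring of $F_0$ by $2^{\prod_{0<j<m}|F_j|}$ colors; by the pigeonhole principle and the size hypothesis on $F_0$, some color class $E_0 \subseteq F_0$ has $|E_0| = t_0$, and on $E_0 \times \prod_{0<j<m}F_j$ the coloring is independent of the first coordinate. It therefore descends to a $2$-coloring of $\prod_{0<j<m}F_j$, whose size hypotheses are unchanged, and I would recurse on the remaining $m-1$ coordinates. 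After $m$ steps this produces the desired monochromatic box.

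With the sub-statement in hand I would construct $i$ by recursion on $n$, separately for each fixed $l$. Set $i(0,l,k) = 1$ for all $k < l$. Given $i(n,l,k)$ for all $k<l$, put $x(n,l) = \prod_{k<l} i(n,l,k)$ and define $i(n+1,l,k)$ by \emph{downward} recursion on $k$ from $l-1$ to $0$ by $i(n+1,l,k) = \max\left\{\,(i(n,l,k)-1)\,2^{\prod_{k<j<l} i(n+1,l,j)} + 1,\ 2^{x(n,l)} + i(n,l,k)\,\right\}$. The right-hand side refers only to the values $i(n+1,l,j)$ with $j > k$, which are already defined, so this is legitimate, and each value is positive. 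Clause (2) holds immediately because $i(n+1,l,k)$ is at least the second term of the maximum.

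Finally I would verify clause (1). Fix $n$, $l$ and $0 < m \leq l$, and write $S_k = i(n+1,l,k)$, $t_k = i(n,l,k)$. For every $k < m$, since each $S_j \geq 1$ and $m \leq l$ we have $\prod_{k<j<m}S_j \leq \prod_{k<j<l}S_j$, whence
\[ (t_k-1)\,2^{\prod_{k<j<m}S_j}+1 \ \leq\ (t_k-1)\,2^{\prod_{k<j<l}S_j}+1 \ \leq\ S_k, \]
the last inequality being the first term of the maximum defining $S_k$. Thus the size hypotheses of the sub-statement are met for the $m$-fold product with targets $t_0,\dots,t_{m-1}$, and clause (1) follows. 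The one point requiring care—and the main obstacle—is exactly this uniformity: a single sequence of source sizes must serve every $m \leq l$ at once, which is why the sizes are calibrated against the full product $\prod_{k<j<l}S_j$ and the monotonicity of that exponent in $m$ is exploited. Everything else is routine bookkeeping.
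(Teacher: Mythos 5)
Your proof is correct, and it follows the same overall strategy as the paper's --- define $i(n+1,l,\cdot)$ as a maximum of two terms, one securing clause (2) and one securing the pigeonhole step, then establish clause (1) by eliminating one coordinate of the product at a time --- but the two arguments are organized quite differently in the details. You isolate a standalone quantitative product-pigeonhole lemma with explicit hypotheses $|F_k| \geq (t_k-1)2^{\prod_{k<j<m}|F_j|}+1$, and you peel off the \emph{first} coordinate by a single application of the pigeonhole principle to the $2^{\prod_{0<j<m}|F_j|}$-coloring of $F_0$ by slice functions; accordingly your $i(n+1,l,k)$ is defined by \emph{downward} recursion on $k$, calibrated against the product of the \emph{later} coordinates up to $l$, and the uniformity over all $m\leq l$ is obtained from the monotonicity of that exponent in $m$. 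The paper instead proves clause (1) directly by induction on $m$, peeling off the \emph{last} coordinate: it enumerates the tuples of $\prod_{k<m}F_k$ and halves $F_m$ once per tuple, which requires $|F_m|\geq 2^{y(n+1,l,m)}i(n,l,m)$ with $y(n+1,l,m)=\prod_{k<m}i(n+1,l,k)$; this forces an \emph{upward} recursion on $k$ in the definition of $i(n+1,l,k)$ and makes the uniformity over $m$ automatic, since the bound for coordinate $k$ only references coordinates below $k$. Your modular version is arguably cleaner --- the extracted sub-lemma is a reusable general statement, and the single pigeonhole on slice colorings replaces the iterated halving --- while the paper's version avoids stating an auxiliary lemma and keeps all the counting inside one induction. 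Both produce valid and comparable bounds, and both satisfy the extra requirement (2) by the same device of taking a maximum with $2^{x(n,l)}+i(n,l,k)$.
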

\begin{proof}
 We define $i(n, l, k)$ by induction on $n \in \omega$ and for a fixed $n$ and a fixed $0 < l < \omega$, by induction on $k < l$.
 Put $i(0, l, k) = 1$ for all $0 < l < \omega$ and $k < l$.
 Fix $n \in \omega$.
 Suppose that $i(n, l, k)$ is given for all $0 < l < \omega$ and all $k < l$.
 Fix $0 < l < \omega$.
 We define $i(n + 1, l, k)$ by induction on $k < l$.
 Let $x(n, l)$ be as in (2) above.
 Note that $0 < x(n, l) < \omega$ and that for any $k < l$, $0 < i(n, l, k) < {2}^{x(n, l)} + i(n, l, k) < \omega$.
 Now fix $k < l$ and assume that $i(n + 1, l, k')$ has been defined for all $k' < k$.
 Define $y(n + 1, l, k) = \displaystyle\prod_{k' < k}i(n + 1, l, k')$ (when $k = 0$ this product is taken to be $1$) and let $z(n + 1, l, k) = {2}^{y(n + 1, l, k)}i(n, l, k)$.
 Note that $0 < z(n + 1, l, k) < \omega$.
 Put $i(n + 1, l, k) = \max\{z(n + 1, l, k), {2}^{x(n, l)} + i(n, l, k)\}$.
 Thus $0 < i(n + 1, l, k) < \omega$ and $i(n + 1, l, k) \geq {2}^{x(n, l)} + i(n, l, k)$ as needed for (2).
 
 To verify (1) fix $n \in \omega$ and $0 < l < \omega$.
 We induct on $0 < m \leq l$.
 Suppose $m = 1$ and suppose $ \lc {F}_{0} \rc = i(n + 1, l, 0)$ and suppose that ${F}_{0} = {X}_{0} \cup {X}_{1}$.
 Then $i(n + 1, l, 0) \geq 2i(n, l, 0)$.
 So there exists $j \in 2$ and ${E}_{0} \subset {X}_{j} \subset {F}_{0}$ such that $\lc {E}_{0} \rc = i(n, l, 0)$, as needed.
 
 Now fix $0 < m < m + 1 \leq l$ and suppose that the required statement holds for $m$.
 Let $\langle {F}_{k}: k < m + 1 \rangle$ be a sequence of sets such that $\forall k < m + 1\[\lc {F}_{k} \rc = i(n + 1, l, k)\]$ and suppose that $\displaystyle\prod_{k < m + 1}{F}_{k} = {X}_{0} \cup {X}_{1}$.
 Let $\langle {\sigma}_{i}: i < y(n + 1, l, m) \rangle$ enumerate the members of $\displaystyle\prod_{k < m}{F}_{k}$.
 Build a sequence $\langle {E}^{i}_{m}: -1 \leq i < y(n + 1, l, m)\rangle$ such that the following hold:
 \begin{enumerate}
  \item[(3)]
  ${E}^{-1}_{m} \subset {F}_{m}$ and $\forall -1 \leq i < i + 1 < y(n + 1, l, m)\[{E}^{i + 1}_{m} \subset {E}^{i}_{m}\]$;
  \item[(4)]
  $\forall -1 \leq i < y(n + 1, l, m)\[\lc {E}^{i}_{m} \rc = {2}^{y(n + 1, l, m) - i - 1}i(n, l, m)\]$;
  \item[(5)]
  $\forall 0 \leq i < y(n + 1, l, m) \exists {j}_{i} \in 2 \forall x \in {E}^{i}_{m}\[{\left( {\sigma}_{i} \right)}^{\frown}{\langle x \rangle} \in {X}_{{j}_{i}}\]$.
 \end{enumerate}
 The sequence is constructed by induction.
 To start choose ${E}^{-1}_{m} \subset {F}_{m}$ of size equal to ${2}^{y(n + 1, l, m)}i(n, l, m)$ (possible because $\lc {F}_{m} \rc = i(n + 1, l, m) \geq {2}^{y(n + 1, l, m)}i(n, l, m)$).
 Now suppose that $-1 \leq i < i + 1 < y(n + 1, l, m)$ and that ${E}^{i}_{m}$ is given.
 For each $j \in 2$ let ${Z}_{j} = \{x \in {E}^{i}_{m}: {\left( {\sigma}_{i + 1} \right)}^{\frown}{\langle x \rangle} \in {X}_{j}\}$.
 Then ${E}^{i}_{m} = {Z}_{0} \cup {Z}_{1}$ and so there exist ${E}^{i + 1}_{m} \subset {E}^{i}_{m}$ and ${j}_{i + 1} \in 2$ such that $\lc {E}^{i + 1}_{m} \rc = {2}^{y(n + 1, l, m) - i - 2}i(n, l, m)$ and ${E}^{i + 1}_{m} \subset {Z}_{{j}_{i + 1}}$.
 It is then clear that ${E}^{i + 1}_{m}$ and ${j}_{i + 1}$ satisfy (3)-(5).
 This completes the construction of the sequence $\langle {E}^{i}_{m}: -1 \leq i  < y(n + 1, l, m)\rangle$.
 For $j \in 2$ define ${Y}_{j} = \{{\sigma}_{i}: 0 \leq i < y(n + 1, l, m)\wedge {j}_{i} = j\}$.
 It is clear that $\displaystyle\prod_{k < m}{F}_{k} = {Y}_{0} \cup {Y}_{1}$.
 So by the inductive hypothesis, there exist $j \in 2$ and a sequence $\langle {E}_{k}: k  < m \rangle$ such that $\forall k < m \[{E}_{k} \subset {F}_{k} \wedge \lc {E}_{k} \rc = i(n, l, k)\]$ and $\left( \displaystyle\prod_{k < m}{E}_{k} \right) \subset {Y}_{j}$.
 Now put ${E}_{m} = {E}^{y(n + 1, l, m) - 1}_{m}$.
 The sequence $\langle {E}_{k}: k < m + 1\rangle$ and $j \in 2$ are as needed.
 This completes the verification of (1) and the proof of the lemma.
\end{proof}
We use Lemma \ref{lem:norm} to show that there exist creatures of arbitrarily high norm.
This is an essential step to defining a partial order out of any notion of a creature.
In our case each condition of the partial order is an approximation to the final collection of ultrafilters and RK-maps.
\begin{Cor} \label{cor:norm}
 Let $A$ be a non-empty finite set and $l = {2}^{\lc A \rc}$.
 Suppose $\langle {s}_{k}: k < l \rangle$ is an enumeration of all the subsets of $A$ such that if $k' < k$, then ${s}_{k} \not\subset {s}_{k'}$.
 For each $a \subset A$, let ${D}_{a}$ denote $\{k < l: {s}_{k} \subset a\}$.
 For each $n \in \omega$, if $\langle {F}_{k}: k < l \rangle$ is any sequence of sets such that $\forall k < l\[\lc {F}_{k} \rc = i(n, l, k)\]$, then $u = \langle \langle {u}_{a}: a \subset A \rangle, \langle {\pi}_{u, b, a}: a \subset b \subset A \rangle \rangle$, where ${u}_{a} = \displaystyle\prod\{{F}_{k}: k \in {D}_{a}\}$ and ${\pi}_{u, b, a}(s) = s \restrict {D}_{a}$, is a member of $\cre(A)$ and has norm at least $n$. 
\end{Cor}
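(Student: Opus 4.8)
The plan is to argue by induction on $n$, establishing simultaneously for every non-empty finite $A$, every enumeration $\langle {s}_{k} : k < l \rangle$ as in the hypothesis, and every sequence $\langle {F}_{k} : k < l \rangle$ with $\lc {F}_{k} \rc = i(n,l,k)$, that the associated $u$ lies in $\cre(A)$ and satisfies $\nor(u) \geq n$. The membership $u \in \cre(A)$ does not depend on the sizes, so I would dispose of it once: since $0 \subset a$ for every $a$ and $0$ occurs as some ${s}_{k}$, each ${D}_{a}$ is non-empty, whence each ${u}_{a} = \prod\{{F}_{k} : k \in {D}_{a}\}$ is a non-empty finite set; moreover $a \subset b$ yields ${D}_{a} \subset {D}_{b}$, so ${\pi}_{u,b,a}(s) = s \restrict {D}_{a}$ is a well-defined surjection of ${u}_{b}$ onto ${u}_{a}$ (surjective because the coordinates in ${D}_{b} \setminus {D}_{a}$ may be filled arbitrarily), and clause (3) of Definition \ref{def:creature} is immediate from iterated restriction. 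The base case $\nor(u) \geq 0$ is vacuous, and the engine of the inductive step is the following closure under sub-boxes: if $\langle {E}_{k} : k < l \rangle$ is any sequence with ${E}_{k} \subset {F}_{k}$ and $\lc {E}_{k} \rc = i(n,l,k)$, then the creature $v$ built from $\langle {E}_{k} \rangle$ exactly as in the statement satisfies $v \in \Sigma(u)$ (both conditions of Definition \ref{Def:sigma} hold because the relevant maps are restriction to ${D}_{a}$), and $\nor(v) \geq n$ by the induction hypothesis applied to $\langle {E}_{k} \rangle$. Thus, to verify $\nor(u) \geq n+1$, it suffices for each clause of Definition \ref{def:norm} to produce a single such sub-box.

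For clause (2a), fix $a$ and a covering ${u}_{a} = {u}^{0} \cup {u}^{1}$, where ${u}_{a} = \prod\{{F}_{k} : k \in {D}_{a}\}$. Since ${D}_{a}$ need not be an initial segment of $l$, I cannot feed it directly into Lemma \ref{lem:norm}(1); the remedy is to work inside the initial segment $m = \max({D}_{a}) + 1 \leq l$. I lift the covering to $\prod_{k < m}{F}_{k} = {\tilde{X}}_{0} \cup {\tilde{X}}_{1}$ via ${\tilde{X}}_{j} = \{\sigma : \sigma \restrict {D}_{a} \in {u}^{j}\}$, apply Lemma \ref{lem:norm}(1) to get $j \in 2$ and ${E}_{k} \subset {F}_{k}$ $(k < m)$ with $\lc {E}_{k} \rc = i(n,l,k)$ and $\prod_{k<m}{E}_{k} \subset {\tilde{X}}_{j}$, and then project back: every point of $\prod\{{E}_{k} : k \in {D}_{a}\}$ extends to some element of $\prod_{k<m}{E}_{k} \subset {\tilde{X}}_{j}$, hence lies in ${u}^{j}$. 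Filling in ${E}_{k}$ arbitrarily of the correct size for $m \leq k < l$ yields a sub-box with ${v}_{a} \subset {u}^{j}$, as required.

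For clause (2b), fix $a, b$ with $b \not\subset a$ together with $F : \Pset({u}_{a}) \rightarrow {u}_{b}$, and let ${k}^{\ast}$ be the index with ${s}_{{k}^{\ast}} = b$; then ${k}^{\ast} \in {D}_{b} \setminus {D}_{a}$, since $b \subset b$ but $b \not\subset a$. I would first choose ${E}_{k}$ of size $i(n,l,k)$ for $k \in {D}_{a}$ arbitrarily, thereby fixing ${v}_{a} = \prod\{{E}_{k} : k \in {D}_{a}\}$. Then $F''\Pset({v}_{a})$ has at most ${2}^{\lc {v}_{a} \rc} \leq {2}^{x(n,l)}$ elements, so the set $P$ of its ${k}^{\ast}$-th coordinates satisfies $\lc P \rc \leq {2}^{x(n,l)}$. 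As $\lc {F}_{{k}^{\ast}} \rc = i(n+1,l,{k}^{\ast}) \geq {2}^{x(n,l)} + i(n,l,{k}^{\ast})$ by Lemma \ref{lem:norm}(2), I can choose ${E}_{{k}^{\ast}} \subset {F}_{{k}^{\ast}} \setminus P$ of size $i(n,l,{k}^{\ast})$; filling in the remaining coordinates arbitrarily gives a sub-box whose ${v}_{b}$ has every ${k}^{\ast}$-coordinate outside $P$, so that $F''\Pset({v}_{a}) \cap {v}_{b} = 0$. It is essential that ${k}^{\ast} \notin {D}_{a}$, so that this last choice does not disturb ${v}_{a}$.

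I expect the one genuine obstacle to be the bookkeeping around the down-sets ${D}_{a}$: because no single enumeration can turn all of them into initial segments, Lemma \ref{lem:norm}(1) cannot be applied verbatim to ${u}_{a}$, and the argument hinges entirely on the calibration of the sizes $i(n,l,k)$ — namely that $i(n+1,l,k)$ dominates both the multiplicative bound needed for the splitting in (2a) (accessed through the initial-segment lift) and the additive bound ${2}^{x(n,l)} + i(n,l,k)$ needed for the avoidance in (2b). Once this is in hand, everything else is routine checking against the definitions of $\cre(A)$, $\Sigma$, and $\nor$.
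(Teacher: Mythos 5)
Your proposal is correct and follows essentially the same route as the paper: induction on $n$, lifting each instance of clause (2a) to a covering of an initial-segment product so that Lemma \ref{lem:norm}(1) applies (the paper simply uses $m=l$ where you use $m=\max({D}_{a})+1$), and handling clause (2b) by shrinking a single coordinate ${k}^{\ast}\in {D}_{b}\setminus{D}_{a}$ to avoid the at most ${2}^{x(n,l)}$ relevant values, exactly as licensed by Lemma \ref{lem:norm}(2). The differences are cosmetic.
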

\begin{proof}
 Since $i(n, l, k)$ is always at least $1$, $u$ as defined above is always a member of $\cre(A)$ with $\nor(u) \geq 0$ regardless of what $n$ is.
 So the claim holds for $n = 0$.
 We assume that the claim is true for some $n \in \omega$ and check it for $n + 1$.
 Indeed let $\langle {F}_{k}: k < l \rangle$ be any sequence of sets with $\lc {F}_{k} \rc = i(n + 1, l, k)$ and let $u$ be defined as above from $\langle {F}_{k}: k < l \rangle$.
 Suppose that $a \subset A$ and that ${u}_{a} = {u}^{0} \cup {u}^{1}$.
 Then $X = \displaystyle\prod\{{F}_{k}: k < l \} = {X}_{0} \cup {X}_{1}$, where ${X}_{j} = \{s \in X: s \restrict {D}_{a} \in {u}^{j}\}$.
 By (1) of Lemma \ref{lem:norm} applied with $m = l$, there exist a sequence $\langle {E}_{k}: k < l \rangle$ and a $j \in 2$ such that ${E}_{k} \subset {F}_{k}$, $ \lc {E}_{k} \rc = i(n, l, k)$, and $\displaystyle\prod\{{E}_{k}: k < l\} \subset {X}_{j}$.
 Now if $v$ is defined from the sequence $\langle {E}_{k}: k < l \rangle$ as above, then by the inductive hypothesis $v \in \cre(A)$ and $\nor(v) \geq n$.
 Moreover it is clear that $v \in \Sigma(u)$ and that ${v}_{a} \subset {u}^{j}$.
 So this checks clause 2(a) of Definition \ref{def:norm}.
 
 For clause 2(b), fix $a, b \subset A$ with $b \not\subset a$.
 Let $F: \Pset({u}_{a}) \rightarrow {u}_{b}$ be any function.
 For each $k < l$, let ${G}_{k} \subset {F}_{k}$ with $\lc {G}_{k} \rc = i(n, l, k)$.
 This is possible to do because by (2) of Lemma \ref{lem:norm}, $\forall k < l\[\lc {F}_{k} \rc  = i(n + 1, l, k) \geq {2}^{x(n, l)} + i(n, l, k) \geq i(n, l, k)\]$, where $x(n, l)$ is defined as there.
 Note that ${D}_{b} \setminus {D}_{a} \neq 0$.
 Fix ${k}_{0} \in {D}_{b} \setminus {D}_{a}$.
 Let $e = \displaystyle\prod\{{G}_{k}: k \in {D}_{a}\}$ and let $o = \displaystyle\prod\{{G}_{k}: k \in l\}$.
 Let $M = \{s({k}_{0}): s \in F''\Pset(e)\}$.
 Then $M \subset {F}_{{k}_{0}}$ and $\lc M \rc \leq \lc \Pset(o) \rc = {2}^{x(n, l)}$.
 There exists ${E}_{{k}_{0}} \subset {F}_{{k}_{0}}$ such that $\lc {E}_{{k}_{0}} \rc = i(n, l, {k}_{0})$ and ${E}_{{k}_{0}} \cap M = 0$ because $\lc {F}_{{k}_{0}} \rc \geq {2}^{x(n, l)} + i(n, l, {k}_{0})$.
 For all $k \in l \setminus \{{k}_{0}\}$, let ${E}_{k} = {G}_{k}$.
 Then $\langle {E}_{k}: k < l \rangle$ is a sequence of sets such that $\forall k < l\[{E}_{k} \subset {F}_{k} \wedge \lc {E}_{k} \rc = i(n, l, k)\]$.
 So by the inductive hypothesis if $v$ is defined as above from $\langle {E}_{k}: k < l \rangle$, then $v \in \cre(A)$ and $\nor(v) \geq n$.
 Moreover $v \in \Sigma(u)$.
 We check that $F''\Pset({v}_{a}) \cap {v}_{b} = 0$.
 Since ${k}_{0} \notin {D}_{a}$, $\forall k \in {D}_{a}\[{E}_{k} = {G}_{k}\]$.
 Therefore ${v}_{a} = e$.
 So if $s \in F''\Pset({v}_{a}) \cap {v}_{b}$, then $s({k}_{0}) \in M$.
 On the other hand by the definition of ${v}_{b}$, $s({k}_{0}) \in {E}_{{k}_{0}}$.
 Hence $M \cap {E}_{{k}_{0}} \neq 0$, contradicting the choice of ${E}_{{k}_{0}}$.
 Therefore $F''\Pset({v}_{a}) \cap {v}_{b} = 0$.
 This concludes the verification of clause 2(b) of Definition \ref{def:norm} and that proof that $\nor(u) \geq n + 1$.
\end{proof}
One of the main features of the final construction will be that creatures will be allowed to ``shift'' their scene of action.
In fact, we will want to perform this shifting operation infinitely often.
The following two lemmas ensure that the two main features of a creature $u$, namely $\nor(u)$ and $\Sigma(u)$, are preserved while shifting.
\begin{Def} \label{def:shift}
 Let $A$ and $B$ be non-empty finite sets and suppose $h: B \rightarrow A$ is an onto function.
 Let $u$ be a creature acting on $B$.
 Define $h\[u\] = v = \langle \langle {v}_{a}: a \subset A \rangle, \langle {\pi}_{v, {a}^{\ast}, a}: a \subset {a}^{\ast} \subset A \rangle \rangle$ by the following clauses:
 \begin{enumerate}
  \item
  for all $a \subset A$, ${v}_{a} = {u}_{{h}^{-1}(a)}$;
  \item
  for all $a \subset {a}^{\ast} \subset A$, ${\pi}_{v, {a}^{\ast}, a} = {\pi}_{u, {h}^{-1}({a}^{\ast}), {h}^{-1}(a)}$.
 \end{enumerate}
\end{Def}
\begin{Lemma} \label{lem:shift1}
 Let $A$, $B$, $h$, $u$, and $v = h\[u\]$ be as in Definition \ref{def:shift}.
 Then $v$ is a creature acting on $A$.
 Moreover, for any $w \in \Sigma(u)$, $h\[w\] \in \Sigma(v)$.
\end{Lemma}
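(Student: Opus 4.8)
The plan is to verify both assertions by directly unwinding Definitions \ref{def:creature}, \ref{Def:sigma}, and \ref{def:shift}; the only conceptual ingredient is that taking preimages under $h$ is monotone, i.e. $a \subset a^{\ast} \subset A$ implies $h^{-1}(a) \subset h^{-1}(a^{\ast}) \subset B$. Everything else is bookkeeping, so the real task is simply to keep the indexing on $A$ and on $B$ straight.

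First I would check that $v = h\[u\]$ satisfies clauses (1)--(3) of Definition \ref{def:creature}. Clause (1) holds because each ${v}_{a} = {u}_{{h}^{-1}(a)}$ is a non-empty finite set, as ${h}^{-1}(a) \subset B$ and $u \in \cre(B)$. For clause (2), whenever $a \subset {a}^{\ast} \subset A$, monotonicity of ${h}^{-1}$ gives ${h}^{-1}(a) \subset {h}^{-1}({a}^{\ast}) \subset B$, so by the creature properties of $u$ the map ${\pi}_{v, {a}^{\ast}, a} = {\pi}_{u, {h}^{-1}({a}^{\ast}), {h}^{-1}(a)}$ is a well-defined onto function from ${u}_{{h}^{-1}({a}^{\ast})} = {v}_{{a}^{\ast}}$ to ${u}_{{h}^{-1}(a)} = {v}_{a}$, exactly as required. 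For clause (3), given $a \subset {a}^{\ast} \subset {a}^{\ast\ast} \subset A$, monotonicity yields ${h}^{-1}(a) \subset {h}^{-1}({a}^{\ast}) \subset {h}^{-1}({a}^{\ast\ast})$, and the desired identity ${\pi}_{v, {a}^{\ast\ast}, a} = {\pi}_{v, {a}^{\ast}, a} \circ {\pi}_{v, {a}^{\ast\ast}, {a}^{\ast}}$ translates, via Definition \ref{def:shift}, into precisely clause (3) for $u$ applied at the three preimages. This establishes that $v \in \cre(A)$.

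Next, for the ``moreover'' clause, fix an arbitrary $w \in \Sigma(u)$ and write $w' = h\[w\]$. Since $w \in \cre(B)$, the first part of the lemma (applied to $w$ in place of $u$) already gives $w' \in \cre(A)$, so it remains to verify clauses (1) and (2) of Definition \ref{Def:sigma} for $w'$ relative to $v$. For clause (1), any $a \subset A$ satisfies ${w'}_{a} = {w}_{{h}^{-1}(a)} \subset {u}_{{h}^{-1}(a)} = {v}_{a}$, using that $w \in \Sigma(u)$. For clause (2), fix $a \subset {a}^{\ast} \subset A$; then ${\pi}_{w', {a}^{\ast}, a} = {\pi}_{w, {h}^{-1}({a}^{\ast}), {h}^{-1}(a)}$, and because $w \in \Sigma(u)$ this equals ${\pi}_{u, {h}^{-1}({a}^{\ast}), {h}^{-1}(a)} \restrict {w}_{{h}^{-1}({a}^{\ast})} = {\pi}_{v, {a}^{\ast}, a} \restrict {w'}_{{a}^{\ast}}$, as needed. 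Hence $h\[w\] \in \Sigma(v)$.

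I do not expect a genuine obstacle here: the computation is mechanical, and the single point that must be handled with care --- and the reason the statement is true at all --- is the order-preservation of ${h}^{-1}$, which transports every inclusion of subsets of $A$ to an inclusion of subsets of $B$ along which the data of $u$ (respectively $w$) are already defined and compatible. Note that surjectivity of $h$ plays no role in this particular lemma; only monotonicity of the preimage operation is used.
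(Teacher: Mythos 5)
Your proof is correct and follows essentially the same route as the paper's: unwind Definitions \ref{def:creature}, \ref{Def:sigma}, and \ref{def:shift} using only the monotonicity of $h^{-1}$, with the first part of the lemma applied to $w$ to start the ``moreover'' clause. The only differences are cosmetic --- you spell out clause (3) of Definition \ref{def:creature}, which the paper leaves implicit, and your observation that surjectivity of $h$ is not needed here (it is used only later, in Lemma \ref{lem:shift2}) is accurate.
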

\begin{proof}
 For any $a \subset A$, ${h}^{-1}(a) \subset B$, and so ${v}_{a} = {u}_{{h}^{-1}(a)}$ is a non-empty finite set.
 Similarly if $a \subset {a}^{\ast} \subset A$, then ${h}^{-1}(a) \subset {h}^{-1}({a}^{\ast}) \subset B$, and so ${\pi}_{v, {a}^{\ast}, a} = {\pi}_{u, {h}^{-1}({a}^{\ast}), {h}^{-1}(a)}$ is an onto map from ${v}_{{a}^{\ast}} = {u}_{{h}^{-1}({a}^{\ast})}$ to ${u}_{{h}^{-1}(a)} = {v}_{a}$.
 Thus $v$ is a creature acting on $A$.
 
 Next, suppose that $w \in \Sigma(u)$.
 By the above $h\[w\]$ is a creature acting on $A$.
 If $a \subset A$, then ${\left( h\[w\] \right)}_{a}= {w}_{{h}^{-1}(a)} \subset {u}_{{h}^{-1}(a)} = {v}_{a}$.
 Likewise, if $a \subset {a}^{\ast} \subset A$, then ${\pi}_{h\[w\], {a}^{\ast}, a} = {\pi}_{w, {h}^{-1}({a}^{\ast}), {h}^{-1}(a)} = {\pi}_{u, {h}^{-1}({a}^{\ast}), {h}^{-1}(a)} \restrict {w}_{{h}^{-1}({a}^{\ast})} = {\pi}_{v, {a}^{\ast}, a} \restrict {\left( h\[w\] \right)}_{{a}^{\ast}}$.
 Thus $h\[w\] \in \Sigma(v)$.
\end{proof}
\begin{Lemma} \label{lem:shift2}
Let $A$, $B$, $h$, $u$, and $v$ be as in Definition \ref{def:shift}.
For each $n \in \omega$, if $\nor(u) \geq n$, then $\nor(v) \geq n$.
\end{Lemma}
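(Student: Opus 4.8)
The plan is to prove by induction on $n$ the statement $P(n)$: for every $u \in \cre(B)$, if $\nor(u) \geq n$ then $\nor(h[u]) \geq n$, where $A$, $B$, and the onto map $h: B \rightarrow A$ are held fixed but $u$ is quantified over all of $\cre(B)$. The base case $n = 0$ is immediate, since $\nor \geq 0$ always holds. For the inductive step I would assume $P(n)$ and, given $u \in \cre(B)$ with $\nor(u) \geq n + 1$, verify the two subclauses 2(a) and 2(b) of $\nor(v) \geq n + 1$ for $v = h[u]$, in each case pulling the data back along $h^{-1}$, applying the corresponding clause for $u$, and pushing the resulting subcreature forward again via Lemma \ref{lem:shift1}.

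For clause 2(a): fix $a \subset A$ and a splitting $v_a = {u}^{0} \cup {u}^{1}$. Since $v_a = u_{h^{-1}(a)}$ by Definition \ref{def:shift}, this is literally a splitting of $u_{h^{-1}(a)}$, so clause 2(a) of $\nor(u) \geq n + 1$ applied to the subset $h^{-1}(a) \subset B$ yields $w \in \Sigma(u)$ and $i \in 2$ with $\nor(w) \geq n$ and $w_{h^{-1}(a)} \subset {u}^{i}$. Then Lemma \ref{lem:shift1} gives $h[w] \in \Sigma(v)$, the inductive hypothesis $P(n)$ gives $\nor(h[w]) \geq n$, and ${(h[w])}_{a} = w_{h^{-1}(a)} \subset {u}^{i}$, so $h[w]$ and $i$ witness clause 2(a) for $v$.

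For clause 2(b): fix $a, b \subset A$ with $b \not\subset a$ and a function $F: \Pset(v_a) \rightarrow v_b$. The only genuine use of surjectivity of $h$ is the observation that $b \not\subset a$ forces $h^{-1}(b) \not\subset h^{-1}(a)$: choosing $y \in b \setminus a$ and, by ontoness, $x \in B$ with $h(x) = y$, we get $x \in h^{-1}(b) \setminus h^{-1}(a)$. Since $v_a = u_{h^{-1}(a)}$ and $v_b = u_{h^{-1}(b)}$, the map $F$ is a function $\Pset(u_{h^{-1}(a)}) \rightarrow u_{h^{-1}(b)}$, so clause 2(b) of $\nor(u) \geq n + 1$ applies to the pair $h^{-1}(a), h^{-1}(b)$ and to $F$, producing $w \in \Sigma(u)$ with $\nor(w) \geq n$ and $F''\Pset(w_{h^{-1}(a)}) \cap w_{h^{-1}(b)} = 0$. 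Again Lemma \ref{lem:shift1} gives $h[w] \in \Sigma(v)$, $P(n)$ gives $\nor(h[w]) \geq n$, and since ${(h[w])}_{a} = w_{h^{-1}(a)}$ and ${(h[w])}_{b} = w_{h^{-1}(b)}$, the disjointness $F''\Pset({(h[w])}_{a}) \cap {(h[w])}_{b} = 0$ holds verbatim. Combining 2(a) and 2(b) gives $\nor(v) \geq n + 1$, completing the induction.

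The argument is essentially bookkeeping: the content is that $h^{-1}$ sends subsets of $A$ to subsets of $B$ compatibly, preserves containment, and — crucially, using that $h$ is onto — reflects non-containment, which is precisely what clause 2(b) requires. The one point needing care is organizing the induction so that $P(n)$ can be invoked on the auxiliary subcreature $w \in \Sigma(u) \subset \cre(B)$ rather than on $u$ itself, which is why I would state $P(n)$ with $u$ universally quantified while keeping $A$, $B$, $h$ fixed. Beyond this there is no real obstacle, as the structural substance was already isolated in Lemma \ref{lem:shift1}.
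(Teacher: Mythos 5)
Your proof is correct and follows essentially the same route as the paper's: induction on $n$, verifying clauses 2(a) and 2(b) by pulling the data back along $h^{-1}$, applying the corresponding clause for $u$, and pushing the resulting subcreature forward via Lemma \ref{lem:shift1}, with surjectivity of $h$ used exactly where you use it, to see that $h^{-1}(b) \not\subset h^{-1}(a)$. Your explicit universal quantification of $P(n)$ over all creatures in $\cre(B)$ is a point of care that the paper leaves implicit, but it is the same argument.
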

\begin{proof}
 The proof is by induction on $n$.
 For $n = 0$, by Lemma \ref{lem:shift1} $v$ is a creature acting on $A$ and so $\nor(v) \geq 0$.
 Assume that it holds for $n$ and suppose $\nor(u) \geq n + 1$.
 We first check clause 2(a) of Definition \ref{def:norm}.
 Let $a \subset A$ and suppose that ${v}_{a}  = {v}^{0} \cup {v}^{1}$.
 Then ${h}^{-1}(a) \subset B$ and ${v}_{a} = {u}_{{h}^{-1}(a)} = {v}^{0} \cup {v}^{1}$.
 So there exists $w \in \Sigma(u)$ with $\nor(w) \geq n$ and $i \in 2$ such that ${w}_{{h}^{-1}(a)} \subset {v}^{i}$.
 By Lemma \ref{lem:shift1} $h\[w\] \in \Sigma(v)$ and by the induction hypothesis $\nor(h\[w\]) \geq n$.
 Also ${\left(h\[w\]\right)}_{a} = {w}_{{h}^{-1}(a)} \subset {v}^{i}$.
 This checks clause 2(a) of Definition \ref{def:norm}.
 
 For clause 2(b), fix $a, {a}^{\ast} \subset A$ and suppose that ${a}^{\ast} \not\subset a$.
 Let $F: \Pset({v}_{a}) \rightarrow {v}_{{a}^{\ast}}$.
 We have ${h}^{-1}(a), {h}^{-1}({a}^{\ast}) \subset B$.
 Moreover ${a}^{\ast} \setminus a \neq \emptyset$.
 Since $h$ is onto, ${h}^{-1}({a}^{\ast} \setminus a) = {h}^{-1}({a}^{\ast}) \setminus {h}^{-1}(a) \neq \emptyset$.
 So ${h}^{-1}({a}^{\ast}) \not\subset {h}^{-1}(a)$.
 Also $F: \Pset({u}_{{h}^{-1}(a)}) \rightarrow {u}_{{h}^{-1}({a}^{\ast})}$.
 As $\nor(u) \geq n + 1$, we can find $w \in \Sigma(u)$ with $\nor(w) \geq n$ such that $F''\Pset({w}_{{h}^{-1}(a)}) \cap {w}_{{h}^{-1}({a}^{\ast})} = 0$.
 By Lemma \ref{lem:shift1} $h\[w\] \in \Sigma(v)$ and by the inductive hypothesis $\nor(h\[w\]) \geq n$.
 Also ${\left( h\[w\] \right)}_{a} = {w}_{{h}^{-1}(a)}$ and ${\left( h\[w\] \right)}_{{a}^{\ast}} = {w}_{{h}^{-1}({a}^{\ast})}$.
 Therefore $F''\Pset({\left( h\[w\] \right)}_{a}) \cap {\left( h\[w\] \right)}_{{a}^{\ast}} = 0$.
 This checks that $\nor(v) \geq n + 1$ and concludes the proof.
\end{proof}
We are now ready to define the forcing poset which we use.
We define a version of the poset that makes sense even in the absence of $\MA(\sigma-\textrm{centered})$, though $\MA(\sigma-\textrm{centered})$ is needed for the various density arguments. 
\begin{Def} \label{def:q}
 We say that $q$ is a \emph{standard sequence} if $q$ is a pair $\langle {I}_{q}, {U}_{q}\rangle$ such that:
 \begin{enumerate}
  \item
  ${I}_{q} = \langle {I}_{q, n}: n \in \omega \rangle$ is a sequence of non-empty finite subsets of $\omega$ such that $\forall n \in \omega \[\max({I}_{q, n}) < \min({I}_{q, n + 1})\]$;
  \item
  ${U}_{q} = \langle {u}^{q, n}: n \in \omega \rangle$ is a sequence such that for each $n \in \omega$, ${u}^{q, n}$ is a creature acting on ${I}_{q, n}$; if $a \subset b \subset {I}_{q, n}$, then ${\pi}_{{u}^{q, n}, b, a}$ will be denoted ${\pi}_{q, b, a}$;
  \item
  for each $n \in \omega$ and $a \subset {I}_{q, n}$, ${u}^{q, n}_{a} \subset \omega$;
  \item
  if $n < n + 1$, then $\nor({u}^{q, n}) < \nor({u}^{q, n + 1})$, and for all $a \subset {I}_{q, n}$ and all $b \subset {I}_{q, n + 1}$, $\max({u}^{q, n}_{a}) < \min\left({u}^{q, n + 1}_{b}\right)$.
 \end{enumerate}
 $\Q$ denotes the set of all standard sequences.
\end{Def}
There are several natural partial orderings that can be defined on $\Q$.
However, we will not be using any ordering on $\Q$ in our construction.
\begin{Def} \label{def:0condition}
 $p$ is called a \emph{$0$-condition} if $p = \langle {\A}_{p}, {\C}_{p}, {\D}_{p}\rangle$ where:
 \begin{enumerate}
  \item
  ${\A}_{p} \subset \Pset(\omega)$, $0, \omega \in {\A}_{p}$, $\lc {\A}_{p} \rc < \c$, and $\forall A, B \in {\A}_{p}\[A \neq B \implies A  \; {\neq}^{\ast} \; B\]$;
  \item
  ${\D}_{p} = \langle {\DD}_{p, A}: A \in {\A}_{p} \rangle$ is a sequence of non-principal filters on $\omega$ with the property that for each $A \in {\A}_{p}$ there exists a family ${\F}_{p, A} \subset {\DD}_{p, A}$ with $\lc {\F}_{p, A} \rc < \c$ such that $\forall X \in {\DD}_{p, A}\exists Y \in {\F}_{p, A}\[Y \subset X\]$;
  \item
  ${\C}_{p} = \langle {\pi}_{p, B, A}: A, B \in {\A}_{p} \wedge A \; {\subset}^{\ast} \; B \rangle$ is a sequence of elements of $\BS$;
  \item
  for all $A, B \in {\A}_{p}$, if $A \; {\subset}^{\ast} \; B$, then $\forall X \in {\DD}_{p, B} \[{\pi}_{p, B, A}''X \in {\DD}_{p, A}\]$.
 \end{enumerate}
 ${\P}_{0} = \{p: p \ \text{is a} \ 0\text{-condition}\}$.
 Define an ordering on ${\P}_{0}$ as follows.
 For any ${p}_{0}, {p}_{1} \in {\P}_{0}$, ${p}_{1} \leq {p}_{0}$ iff ${\A}_{{p}_{1}} \supset {\A}_{{p}_{0}}$, $\forall A, B \in {\A}_{{p}_{0}} \[ A \; {\subset}^{\ast} \; B \implies {\pi}_{{p}_{1}, B, A} = {\pi}_{{p}_{0}, B, A}\]$, and $\forall A \in {\A}_{{p}_{0}}\[{\DD}_{{p}_{1}, A} \supset {\DD}_{{p}_{0}, A}\]$. 
\end{Def}
\begin{Def} \label{def:induce}
 Let $p \in {\P}_{0}$ and $q \in \Q$.
 We say that $q$ \emph{induces} $p$ if the following hold:
 \begin{enumerate}
  \item 
  Let $\BB$ denote the Boolean subalgebra of $\Pset(\omega)$ generated by ${\A}_{p}$; then for every infinite member $A$ of $\BB$, $\forallbutfin n \in \omega \[\lc A \cap {I}_{q, n} \rc < \lc A \cap {I}_{q, n + 1} \rc\]$;
  \item
  for each $A \in {\A}_{p}$ and each $X \in {\DD}_{p, A}$, $\forallbutfin n \in \omega \[{u}^{q, n}_{A \cap {I}_{q, n}} \subset X\]$;
  \item
  for each $A, B \in {\A}_{p}$ with $A \; {\subset}^{\ast} \; B$ the following holds:
  \begin{align*}
  \forallbutfin n \in \omega \[{\pi}_{p, B, A} \restrict {u}^{q, n}_{B \cap {I}_{q, n}} = {\pi}_{q, B \cap {I}_{q, n}, A \cap {I}_{q, n}}\].
  \end{align*}
 \end{enumerate}
\end{Def}
Note that if $p, p' \in {\P}_{0}$, $p \leq p'$, $q \in \Q$, and $q$ induces $p$, then $q$ also induces $p'$.
\begin{Lemma} \label{lem:ppoint1}
 Let $p \in {\P}_{0}$ and suppose $q \in \Q$ induces $p$.
 Define ${p}_{0} = \langle {\A}_{{p}_{0}}, {\C}_{{p}_{0}}, {\D}_{{p}_{0}}\rangle$, where ${\A}_{{p}_{0}} = {\A}_{p}$, ${\D}_{{p}_{0}} = \langle {\DD}_{{p}_{0}, A}: A \in {\A}_{{p}_{0}} \rangle$, where 
 \begin{align*}
 {\DD}_{{p}_{0}, A} = \left\{a \subset \omega: \left( {\bigcup}_{n \in \omega}{{u}^{q, n}_{A \cap {I}_{q, n}}}\right) \; {\subset}^{\ast} \; a \right\},
 \end{align*}
 and ${\C}_{{p}_{0}} = \langle {\pi}_{{p}_{0}, B, A}: A, B \in {\A}_{{p}_{0}} \wedge A \; {\subset}^{\ast} \; B \rangle$, where ${\pi}_{{p}_{0}, B, A} = {\pi}_{p, B, A}$.
 Then ${p}_{0} \in {\P}_{0}$, ${p}_{0} \leq p$, and $q$ induces ${p}_{0}$.
\end{Lemma}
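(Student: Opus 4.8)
The plan is to verify in turn that $p_0 \in {\P}_{0}$, that $p_0 \leq p$, and that $q$ induces $p_0$, in each case reducing the claim to the inducement hypotheses of Definition \ref{def:induce} together with the basic properties of creatures. Throughout, for $A \in {\A}_{{p}_{0}}$ I write $W_A = \bigcup_{n \in \omega} u^{q,n}_{A \cap {I}_{q,n}}$, so that ${\DD}_{{p}_{0}, A} = \{a \subset \omega : W_A \; {\subset}^{\ast} \; a\}$. The single most useful observation is that, by clause (4) of Definition \ref{def:q}, the sets $u^{q,n}_{A \cap {I}_{q,n}}$ are non-empty and have strictly increasing minima; hence they are pairwise disjoint and partition $W_A$ into infinitely many finite blocks. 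In particular each $W_A$ is infinite, and any finite subset of $W_A$ meets only finitely many blocks.

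First I would check that $p_0$ is a $0$-condition. Clause (1) of Definition \ref{def:0condition} is inherited verbatim from $p$ since ${\A}_{{p}_{0}} = {\A}_{p}$, and clause (3) holds because ${\C}_{{p}_{0}} = {\C}_{p}$. Each ${\DD}_{{p}_{0}, A}$ is a non-principal filter: it is closed under finite intersection and superset, it is proper because $W_A$ is infinite, and it contains every cofinite set $\omega \setminus F$ because $W_A \cap F$ is finite. For clause (2) I would take ${\F}_{{p}_{0}, A} = \{W_A \setminus F : F \in \fin\}$, which is countable, hence of size $< \c$; given $X \in {\DD}_{{p}_{0}, A}$, the set $F = W_A \setminus X$ is finite, $W_A \setminus F = W_A \cap X \subset X$, and $W_A \setminus F \in {\F}_{{p}_{0}, A} \cap {\DD}_{{p}_{0}, A}$.

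The heart of the matter is clause (4) of Definition \ref{def:0condition}, and I expect this to be the main point; the rest is bookkeeping. Fix $A, B \in {\A}_{{p}_{0}}$ with $A \; {\subset}^{\ast} \; B$ and $X \in {\DD}_{{p}_{0}, B}$, so $W_B \; {\subset}^{\ast} \; X$. Since images respect ${\subset}^{\ast}$, we have ${\pi}_{p, B, A}''W_B \; {\subset}^{\ast} \; {\pi}_{p, B, A}''X$, so it suffices to prove $W_A \; {\subset}^{\ast} \; {\pi}_{p, B, A}''W_B$. Because $A \setminus B$ is finite and the ${I}_{q,n}$ are disjoint, there is $N$ such that for all $n \geq N$ one has $A \cap {I}_{q,n} \subset B \cap {I}_{q,n}$ and, by clause (3) of ``$q$ induces $p$'', ${\pi}_{p, B, A} \restrict u^{q,n}_{B \cap {I}_{q,n}} = {\pi}_{q, B \cap {I}_{q,n}, A \cap {I}_{q,n}}$. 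By clause (2) of Definition \ref{def:creature} this creature projection is onto $u^{q,n}_{A \cap {I}_{q,n}}$, so ${\pi}_{p, B, A}''u^{q,n}_{B \cap {I}_{q,n}} = u^{q,n}_{A \cap {I}_{q,n}}$ for every $n \geq N$. Taking unions gives ${\pi}_{p, B, A}''W_B \supset \bigcup_{n \geq N} u^{q,n}_{A \cap {I}_{q,n}}$, and the remaining blocks form a finite set, whence $W_A \; {\subset}^{\ast} \; {\pi}_{p, B, A}''W_B$, as required.

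Finally, $p_0 \leq p$ and ``$q$ induces $p_0$'' both follow from the block decomposition. For $p_0 \leq p$ the only nontrivial requirement is ${\DD}_{p, A} \subset {\DD}_{{p}_{0}, A}$: given $X \in {\DD}_{p, A}$, clause (2) of ``$q$ induces $p$'' yields $u^{q,n}_{A \cap {I}_{q,n}} \subset X$ for all large $n$, so all but finitely many blocks of $W_A$ lie in $X$ and $W_A \; {\subset}^{\ast} \; X$. For inducement of $p_0$, clauses (1) and (3) of Definition \ref{def:induce} are identical to those for $p$, since ${\A}_{{p}_{0}} = {\A}_{p}$ and ${\pi}_{{p}_{0}, B, A} = {\pi}_{p, B, A}$. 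For clause (2), fix $A$ and $X \in {\DD}_{{p}_{0}, A}$; then $W_A \setminus X$ is finite, hence meets only finitely many blocks, so for all sufficiently large $n$ the block $u^{q,n}_{A \cap {I}_{q,n}} \subset W_A$ is disjoint from $W_A \setminus X$ and therefore contained in $X$. This completes the three verifications.
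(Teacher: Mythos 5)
Your proof is correct and takes essentially the same approach as the paper: the key verification of clause (4) of Definition \ref{def:0condition} combines the same three consequences of ``$q$ induces $p$'' with the ontoness of the creature projections, merely routed through $W_A \; {\subset}^{\ast} \; {\pi}_{p,B,A}''W_B \; {\subset}^{\ast} \; {\pi}_{p,B,A}''X$ rather than showing directly that almost every block $u^{q,n}_{A \cap I_{q,n}}$ lies in ${\pi}_{p,B,A}''X$. The remaining verifications, which the paper dismisses as straightforward, are filled in correctly.
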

\begin{proof}
 The only clause in Definition \ref{def:0condition} that is not obvious is (4).
 To check it fix $A, B \in {\A}_{{p}_{0}}$ with $A \; {\subset}^{\ast}\; B$.
 Fix $X \in {\DD}_{{p}_{0}, B}$.
 Since ${\pi}_{{p}_{0}, B, A} = {\pi}_{p, B, A}$, we would like to see that ${\pi}_{p, B, A}''X \in {\DD}_{{p}_{0}, A}$.
 By the definition of ${\DD}_{{p}_{0}, B}$, $ \left( {\bigcup}_{n \in \omega}{{u}^{q, n}_{B \cap {I}_{q, n}}} \right) \; {\subset}^{\ast}\; X$.
 Because of this and because $q$ induces $p$ and $A \; {\subset}^{\ast} \; B$, the following things hold:
 \begin{enumerate}
  \item
  $\forallbutfin n \in \omega \[{u}^{q, n}_{B \cap {I}_{q, n}} \subset X\]$;
  \item
  $\forallbutfin n \in \omega \[A \cap {I}_{q, n} \subset B \cap {I}_{q, n}\]$;
  \item
  $\forallbutfin n \in \omega \[{\pi}_{p, B, A} \restrict {u}^{q, n}_{B \cap {I}_{q, n}} = {\pi}_{q, B \cap {I}_{q, n}, A \cap {I}_{q, n}}\]$.
 \end{enumerate}
 Let $n \in \omega$ be arbitrary such that (1)-(3) hold.
 Then 
 \begin{align*}
 {\pi}_{q, B \cap {I}_{q, n}, A \cap {I}_{q, n}}: {u}^{q, n}_{B \cap {I}_{q, n}} \rightarrow {u}^{q, n}_{A \cap {I}_{q, n}} 
 \end{align*}
 is an onto function.
 So if $k \in {u}^{q, n}_{A \cap {I}_{q, n}}$, then for some $l \in {u}^{q, n}_{B \cap {I}_{q, n}} \subset X$ we have
 \begin{align*}
  k = {\pi}_{q, B \cap {I}_{q, n}, A \cap {I}_{q, n}}(l) = \left( {\pi}_{p, B, A} \restrict {u}^{q, n}_{B \cap {I}_{q, n}} \right)(l) = {\pi}_{p, B, A}(l)
 \end{align*}
 Therefore $k \in {\pi}_{p, B, A}''X$, and so ${u}^{q, n}_{A \cap {I}_{q, n}} \subset {\pi}_{p, B, A}''X$.
 Thus we have shown that $\forallbutfin n \in \omega \[{u}^{q, n}_{A \cap {I}_{q, n}} \subset {\pi}_{p, B, A}''X\]$, which implies ${\pi}_{p, B, A}''X \in {\DD}_{{p}_{0}, A}$.
 
 Checking that ${p}_{0} \leq p$ and that $q$ induces ${p}_{0}$ is straightforward.
\end{proof}
\begin{Def} \label{def:1condition}
 We say that a $0$-condition $p$ is \emph{finitary} if $\lc {\A}_{p} \rc < \omega$ and $\forall A \in {\A}_{p} \exists {\F}_{p, A} \subset {\DD}_{p, A}\[\lc {\F}_{p, A}\rc \leq \omega \wedge \forall X \in {\DD}_{p, A} \exists Y \in {\F}_{p, A}\[Y \subset X\]\]$.
 A $0$-condition $p$ is called a \emph{$1$-condition} if every finitary $p' \in {\P}_{0}$ that satisfies $p \leq p'$ is induced by some $q \in \Q$.
 Let ${\P}_{1} = \{p \in {\P}_{0}: p \ \text{is a} \ 1\text{-condition}\}$.
 We partially order ${\P}_{1}$ by the same ordering $\leq$ as ${\P}_{0}$.
\end{Def}
\begin{Lemma} \label{lem:nonempty}
 ${\P}_{1}$ is non-empty.
\end{Lemma}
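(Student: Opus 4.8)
The plan is to exhibit a single $1$-condition, and the whole lemma turns out to hinge on the remark immediately following Definition \ref{def:induce}: if $q \in \Q$ induces a $0$-condition $p$ and $p \leq p'$, then $q$ induces $p'$ as well. Consequently, once I produce even one $0$-condition $p_0$ that is induced by some standard sequence $q$, that same $q$ induces every $p' \geq p_0$, and in particular every \emph{finitary} $p'$ with $p_0 \leq p'$; by Definition \ref{def:1condition} this makes $p_0$ a $1$-condition. So the task reduces to building a standard sequence $q \in \Q$ together with a $0$-condition it induces.

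First I would construct $q$. Fix consecutive intervals ${I}_{q, n} \subset \omega$ with $\max({I}_{q, n}) < \min({I}_{q, n + 1})$ and $|{I}_{q, n}| = n + 1$, so the block sizes strictly increase. Using Corollary \ref{cor:norm} I obtain, for each $n$, a creature $w^{n} \in \cre({I}_{q, n})$ of norm at least $m_{n}$, where the $m_{n}$ are chosen inductively to be strictly increasing (possible because $\nor(w^{n-1})$ is a fixed finite number and Corollary \ref{cor:norm} yields creatures of arbitrarily large norm). The value sets produced by Corollary \ref{cor:norm} are products of finite sets rather than subsets of $\omega$, so I relabel: each $w^{n}$ has only finitely many value sets, each finite, so I carry them injectively into a finite block $B_{n} \subset \omega$ with $\max(B_{n}) < \min(B_{n+1})$ and transport the projection maps accordingly. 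Since $\nor$ and $\Sigma$ depend only on the abstract combinatorics of a creature (partitions of, and functions on, its value sets), this relabeling yields an isomorphic creature ${u}^{q, n}$ with the same norm and with all ${u}^{q, n}_{a} \subset \omega$. Clauses (1)--(3) of Definition \ref{def:q} are then immediate, the strictly increasing norms give the first half of clause (4), and the block ordering $\max(B_{n}) < \min(B_{n+1})$ gives $\max({u}^{q, n}_{a}) < \min({u}^{q, n+1}_{b})$ for all $a \subset {I}_{q, n}$, $b \subset {I}_{q, n+1}$; thus $q = \langle {I}_{q}, {U}_{q}\rangle \in \Q$.

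Next I would define the induced condition. Set ${\A}_{p_0} = \{0, \omega\}$ (the minimum allowed, and $0 \; {\neq}^{\ast}\; \omega$). For $A \in \{0, \omega\}$ put $S_{A} = {\bigcup}_{n \in \omega}{u}^{q, n}_{A \cap {I}_{q, n}}$; the pieces are non-empty and pairwise disjoint by clause (4) of Definition \ref{def:q}, so $S_{A}$ is infinite. Define ${\DD}_{p_0, A} = \{a \subset \omega : S_{A} \; {\subset}^{\ast}\; a\}$, precisely the form appearing in Lemma \ref{lem:ppoint1}; these filters are non-principal and have the countable base $\{S_{A} \setminus F : F \in \fin\}$, so clause (2) of Definition \ref{def:0condition} holds. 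Define the maps ${\pi}_{p_0, B, A}$ (for $A \; {\subset}^{\ast}\; B$ in $\{0, \omega\}$) by letting them agree with ${\pi}_{q, B \cap {I}_{q, n}, A \cap {I}_{q, n}}$ on each ${u}^{q, n}_{B \cap {I}_{q, n}}$ — the pieces have pairwise disjoint domains and ranges by clause (4) of Definition \ref{def:q} — and arbitrarily off $S_{B}$. Clauses (1) and (3) of Definition \ref{def:0condition} are immediate, and the coherence clause (4) follows by exactly the computation in Lemma \ref{lem:ppoint1}: for $X \in {\DD}_{p_0, B}$ we have ${u}^{q, n}_{B \cap {I}_{q, n}} \subset X$ for all but finitely many $n$, and since each ${\pi}_{q, B \cap {I}_{q, n}, A \cap {I}_{q, n}}$ is onto with image ${u}^{q, n}_{A \cap {I}_{q, n}}$, it follows that $S_{A} \; {\subset}^{\ast}\; {\pi}_{p_0, B, A}''X$, i.e.\ ${\pi}_{p_0, B, A}''X \in {\DD}_{p_0, A}$. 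Hence $p_0 \in {\P}_{0}$. That $q$ induces $p_0$ is read straight off Definition \ref{def:induce}: clause (1) holds since the only infinite set in the Boolean algebra generated by $\{0, \omega\}$ is $\omega$ and $|{I}_{q, n}| = n+1$ strictly increases; clause (2) holds because $X \in {\DD}_{p_0, A}$ means $S_{A} \; {\subset}^{\ast}\; X$, so all but finitely many of the disjoint pieces ${u}^{q, n}_{A \cap {I}_{q, n}}$ lie in $X$; and clause (3) holds by construction of the maps.

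Applying the remark following Definition \ref{def:induce} then finishes the argument: $q$ induces every $p' \geq p_0$, in particular every finitary one, so $p_0$ is a $1$-condition and ${\P}_{1} \neq 0$. I expect the only genuine work to be the construction of $q$ — forcing the unboundedly-high-norm creatures supplied by Corollary \ref{cor:norm} into the rigid format of Definition \ref{def:q} (value sets inside $\omega$, correctly ordered, strictly increasing norms) — together with the coherence check for $p_0$; the reduction of the $1$-condition requirement to producing a single induced $0$-condition, via the monotonicity of the inducing relation, is what renders everything else routine.
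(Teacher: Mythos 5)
Your proposal is correct and follows essentially the same route as the paper: take ${\A}_{{p}_{0}} = \{0,\omega\}$, build a standard sequence $q$ from Corollary \ref{cor:norm} (the paper uses the intervals $[{2}^{n}, {2}^{n+1})$ and likewise implicitly relabels the product-type value sets into increasing blocks of $\omega$), let the filters be generated by the unions ${\bigcup}_{n}{u}^{q,n}_{A \cap {I}_{q,n}}$ and the maps by the creature projections, and then invoke the monotonicity of the inducing relation to conclude that $q$ induces every finitary extension, so ${p}_{0} \in {\P}_{1}$. Your explicit treatment of the relabeling and of the coherence clause (4) of Definition \ref{def:0condition} only spells out steps the paper leaves to the reader.
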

\begin{proof}
 Let ${\A}_{p} = \{0, \omega\}$.
 Define ${i}_{0} = 0$ and ${i}_{n + 1} = {2}^{n + 1}$ for all $n \in \omega$.
 Let ${I}_{n} = [{i}_{n}, {i}_{n + 1})$ and find a sequence $U = \langle {u}^{n}: n \in \omega \rangle$ satisfying clauses (2)-(4) of Definition \ref{def:q} with respect to $I = \langle {I}_{n}: n \in \omega \rangle$ using Corollary \ref{cor:norm}.
 Then $q =\langle I, U \rangle \in \Q$.
 Let ${A}_{0} = {\bigcup}_{n \in \omega}{{u}^{n}_{0}}$ and let ${A}_{\omega} = {\bigcup}_{n \in \omega}{{u}^{n}_{{I}_{n}}}$.
 Both of these sets are infinite subsets of $\omega$.
 Let ${\DD}_{p, 0} = \{a \subset \omega: {A}_{0} \; {\subset}^{\ast} \; a \}$ and ${\DD}_{p, \omega} = \{a \subset \omega: {A}_{\omega} \; {\subset}^{\ast} \; a \}$.
 Let ${\D}_{p} = \langle {\DD}_{p, A}: A = 0 \vee A = \omega \rangle$.
 Define ${\pi}_{p, \omega, 0}, {\pi}_{p, 0, 0}, {\pi}_{p, \omega, \omega} \in \BS$ as follows.
 Fix $k \in \omega$.
 If $k \in {A}_{\omega}$, then ${\pi}_{p, \omega, 0}(k) = {\pi}_{{u}^{n}, {I}_{n}, 0}(k)$ and ${\pi}_{p, \omega, \omega}(k) = {\pi}_{{u}^{n}, {I}_{n}, {I}_{n}}(k)$, where $n$ is the unique member of $\omega$ such that $k \in {u}^{n}_{{I}_{n}}$; if $k \notin {A}_{\omega}$, then ${\pi}_{p, \omega, 0}(k) = 0 = {\pi}_{p, \omega, \omega}(k)$; if $k \in {A}_{0}$, then let ${\pi}_{p, 0, 0}(k) = {\pi}_{{u}^{n}, 0, 0}(k)$, where $n$ is the unique member of $\omega$ such that $k \in {u}^{n}_{0}$; if $k \notin {A}_{0}$, then put ${\pi}_{p, 0, 0}(k) = 0$.
 Let ${\C}_{p} = \langle {\pi}_{p, B, A}: A, B \in {\A}_{p} \wedge A \; {\subset}^{\ast} \; B \rangle$.
 Let $p = \langle {\A}_{p}, {\C}_{p}, {\D}_{p} \rangle$.
 It is easy to check that $p \in {\P}_{0}$ and that $q$ induces $p$.
 So $q$ also induces any $p' \in {\P}_{0}$ with $p \leq p'$.
 Thus $p \in {\P}_{1}$.
\end{proof}
${\P}_{1}$ is the poset that will be used in the construction.
As mentioned earlier, $\MA(\sigma-\textrm{centered})$ is not needed for the definition of ${\P}_{1}$ or to prove that it is non-empty, although it will be needed to prove most of its properties.
The first of these properties, proved in the next lemma, shows that there is a single standard sequence that induces the entire condition.
\begin{Lemma} [Representation Lemma]\label{lem:rep}
 Assume $\MA(\sigma-\textrm{centered})$.
 Every $p \in {\P}_{1}$ is induced by some $q \in \Q$.
\end{Lemma}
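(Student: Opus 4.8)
The plan is to realize the single inducing sequence $q$ as the union of the stems along a generic filter for an auxiliary $\sigma$-centered poset $R$, and to extract that filter using $\MA(\sigma\text{-centered})$. A condition of $R$ is a pair $r = \langle {t}_{r}, {\Gamma}_{r}\rangle$, where ${t}_{r}$ is a finite initial segment $\langle \langle {I}_{0}, \dotsc, {I}_{N - 1}\rangle, \langle {u}^{0}, \dotsc, {u}^{N - 1}\rangle\rangle$ satisfying the finite analogues of clauses (1)--(4) of Definition \ref{def:q}, and ${\Gamma}_{r}$ is a finite set of \emph{commitments}, one kind for each clause of Definition \ref{def:induce}: an infinite $A \in \BB$ (demanding of each newly added block $m$ that $\lc A \cap {I}_{m - 1}\rc < \lc A \cap {I}_{m}\rc$), a pair $\langle A, X\rangle$ with $A \in {\A}_{p}$ and $X \in {\F}_{p, A}$ (demanding ${u}^{m}_{A \cap {I}_{m}} \subset X$), and a pair $\langle A, B\rangle$ with $A, B \in {\A}_{p}$ and $A \; {\subset}^{\ast}\; B$ (demanding the corresponding agreement of projections). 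I order $R$ by declaring $r' \leq r$ iff ${t}_{r'}$ end-extends ${t}_{r}$, ${\Gamma}_{r'} \supseteq {\Gamma}_{r}$, and every block of ${t}_{r'}$ beyond the length of ${t}_{r}$ meets every commitment in ${\Gamma}_{r}$.

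The conditions sharing a given stem form a centered set, a common lower bound being obtained by keeping the stem and unioning the commitment sets; since the creatures lie in $H(\omega)$ there are only countably many stems, so $R$ is $\sigma$-centered. The dense sets I need are, for each $m \in \omega$, the set of $r$ with $\lc {t}_{r}\rc \geq m$, and, for each individual commitment $\gamma$, the set of $r$ with $\gamma \in {\Gamma}_{r}$; the latter are dense trivially, as adjoining a commitment constrains no existing block. Counting them: there are $\lc \BB\rc = \max(\lc {\A}_{p}\rc, {\aleph}_{0}) < \c$ of the first kind, at most ${\sum}_{A \in {\A}_{p}} \lc {\F}_{p, A}\rc < \c$ of the second, and at most $\lc {\A}_{p}\rc^{2} < \c$ of the third, so fewer than $\c$ in all; here I use that $\MA(\sigma\text{-centered})$ makes $\c = \p$ regular.

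The crux is density of the length-increasing sets, and this is where the hypothesis that $p$ is a $1$-condition enters. Given $r$ with stem of length $N$, I manufacture a \emph{finitary} $p' \in {\P}_{0}$ with $p \leq p'$ that encodes ${\Gamma}_{r}$: let ${\A}_{p'}$ be the finite subset of ${\A}_{p}$ consisting of $0, \omega$, every $A, B$ named in ${\Gamma}_{r}$, and, for each infinite $A \in \BB$ named, the finitely many members of ${\A}_{p}$ of which $A$ is a Boolean combination. The delicate point is clause (4) of Definition \ref{def:0condition}; I secure it by taking each ${\DD}_{p', A}$ to be generated by a countable family ${\GG}_{A} \subseteq {\DD}_{p, A}$ that contains all cofinite sets together with the relevant seeds $X$, and that is closed both under finite intersection and under the finitely many images ${\pi}_{p, B, A}''$ for $A \; {\subset}^{\ast}\; B$ in ${\A}_{p'}$. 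Such a countable closure exists because images of members of ${\DD}_{p, B}$ fall in ${\DD}_{p, A}$ by clause (4) for $p$; and since every member of ${\DD}_{p', B}$ then contains a member of ${\GG}_{B}$ while images are monotone, clause (4) holds for $p'$. As $p$ is a $1$-condition, $p'$ is induced by some $q' \in \Q$, whence all three requirements of Definition \ref{def:induce} for $p'$ hold for all but finitely many blocks of $q'$; since the norms and the coordinates of the blocks of $q'$ tend to infinity, I may select one index $n$ so large that $\langle {I}_{q', n}, {u}^{q', n}\rangle$ has norm exceeding $\nor({u}^{N - 1})$, has all coordinates above $\max({u}^{N - 1}_{a})$, and simultaneously meets every commitment in ${\Gamma}_{r}$. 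Appending this block to ${t}_{r}$ gives the required extension.

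Finally I apply $\MA(\sigma\text{-centered})$ to get a filter $G$ meeting all these dense sets and set $q = \bigcup\{{t}_{r}: r \in G\}$; the stems in $G$ are linearly ordered by end-extension, so $q$ is well defined, and it lies in $\Q$ and is infinite because the length-increasing sets are met. For each commitment $\gamma$ there is $r \in G$ with $\gamma \in {\Gamma}_{r}$, and from the length of ${t}_{r}$ onward every block of $q$ respects $\gamma$; reading this off for the three kinds of commitment yields exactly the three clauses of Definition \ref{def:induce}, using for clause (2) that ${\F}_{p, A}$ is cofinal in ${\DD}_{p, A}$. Hence $q$ induces $p$. I expect the only genuine obstacle to be the stem-extension step, and within it the bookkeeping that certifies the auxiliary $p'$ to be a bona fide finitary $0$-condition; everything else is routine density and coherence.
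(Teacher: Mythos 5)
Your proposal is correct and follows essentially the same route as the paper's proof: an auxiliary $\sigma$-centered poset of finite initial segments of standard sequences carrying finite side conditions, with the crucial density of the length-extending sets obtained by packaging the current commitments into a finitary $0$-condition $p'$ above $p$ (closing the generating families under the images ${\pi}_{p, B, A}''$ to secure clause (4) of Definition \ref{def:0condition}) and then invoking the definition of a $1$-condition to produce an inducing $q'$ from which a sufficiently late block is extracted. The only cosmetic difference is that you append the chosen block of $q'$ directly, whereas the paper first fixes the new interval and transfers the creature onto it with the shift operation of Definition \ref{def:shift}; both work here, the shift being genuinely needed only in the later density arguments where a new set must be accommodated.
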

\begin{proof}
Fix $p \in {\P}_{1}$.
For each $A \in {\A}_{p}$ choose ${\F}_{p, A} \subset {\DD}_{p, A}$ as in (2) of Definition \ref{def:0condition}.
Define a partial order $\Ra$ as follows.
A condition $r \in \Ra$ iff $r = \langle {f}_{r}, {g}_{r}, {F}_{r}, {\Phi}_{r}\rangle$ where:
\begin{enumerate}
 \item
 $\langle {f}_{r}, {g}_{r} \rangle$ is an initial segment of some standard sequence --  that is, there exist ${n}_{r} \in \omega$ and a standard sequence $\langle I, U \rangle$ such that ${f}_{r} = I \restrict {n}_{r}$ and ${g}_{r} = U \restrict {n}_{r}$;
 \item
 ${F}_{r}$ is a finite subset of ${\A}_{p}$;
 \item
 ${\Phi}_{r}$ is a function with domain ${F}_{r}$ such that $\forall A \in {F}_{r}\[{\Phi}_{r}(A) \in {\DD}_{p, A}\]$.
\end{enumerate}
Partially order $\Ra$ by stipulating that $s \preceq r$ iff
\begin{enumerate}
 \item[(4)]
 ${f}_{s} \supset {f}_{r}$, ${g}_{s} \supset {g}_{r}$, ${F}_{s} \supset {F}_{r}$, and $\forall A \in {F}_{r}\[{\Phi}_{s}(A) \subset {\Phi}_{r}(A)\]$;
 \item[(5)]
 if ${\BB}_{r}$ is the Boolean subalgebra of $\Pset(\omega)$ generated by ${F}_{r}$, then for every $B \in {\BB}_{r}$, $\forall n \in {n}_{s} \setminus {n}_{r}\[{f}_{s}(n) \cap B \neq 0 \ \text{iff} \ B \ \text{is infinite}\]$;
 \item[(6)]
 for every infinite $B \in {\BB}_{r}$, 
 \begin{align*}
 \forall n \in {n}_{s}\[n + 1 \in {n}_{s} \setminus {n}_{r} \implies \lc B \cap {f}_{s}(n) \rc < \lc B \cap {f}_{s}(n + 1)\rc\];
 \end{align*}
 \item[(7)]
 for each $A \in {F}_{r}$, $\forall n \in {n}_{s} \setminus {n}_{r} \[{\left( {g}_{s}(n) \right)}_{\left(A \cap {f}_{s}(n)\right)} \; \subset \; {\Phi}_{r}(A)\]$;
 \item[(8)]
 for each $A, B \in {F}_{r}$, if $A \; {\subset}^{\ast} \; B$, then 
 \begin{align*}
 \forall n \in {n}_{s} \setminus {n}_{r}\[{\pi}_{p, B, A} \restrict \left( {\left( {g}_{s}(n) \right)}_{B \cap {f}_{s}(n)}\right) = {\pi}_{{g}_{s}(n), B \cap {f}_{s}(n), A \cap {f}_{s}(n)}\].
 \end{align*}
\end{enumerate}
It is easily checked that $\langle \Ra, \preceq \rangle$ is a $\sigma$-centered poset.
It is also easy to check that for each $A \in {\A}_{p}$ and each $Y \in {\F}_{p, A}$, ${R}_{A, Y} = \{s \in \Ra: A \in {F}_{s} \wedge {\Phi}_{s}(A) \subset Y \}$ is dense in $\Ra$.
Now check the following claim.
\begin{Claim} \label{claim:rep1}
 For each $n \in \omega$, ${R}_{n} = \{s \in \Ra: n < {n}_{s}\}$ is dense in $\Ra$. 
\end{Claim}
\begin{proof}
The proof is by induction on $n$.
Fix $n$ and suppose the claim is true for all $m < n$.
Let $r \in \Ra$.
By the inductive hypothesis, we may assume that $n \subset {n}_{r}$.
If $n < {n}_{r}$, then there is nothing to do, so we assume $n = {n}_{r}$ and define $s$ so that ${n}_{s} = n + 1$. 
Also $0, \omega \in {\A}_{p}$.
So we may assume that $\{0, \omega\} \subset {F}_{r}$.
Let ${\BB}_{r}$ be the Boolean subalgebra of $\Pset(\omega)$ generated by ${F}_{r}$.
This is finite.
So we can find a finite, non-empty set ${f}_{s}(n) \subset \omega$ such that:
\begin{enumerate}
 \item[(9)]
 for any finite $B \in {\BB}_{r}$, $B \cap {f}_{s}(n) = 0$;
 \item[(10)]
 for any infinite $B \in {\BB}_{r}$, $B \cap {f}_{s}(n) \neq 0$;
 \item[(11)]
 if $n > 0$, then $\min({f}_{s}(n)) > \max({f}_{r}(n - 1))$ and for any infinite $B \in {\BB}_{r}$, $\lc {f}_{r}(n - 1) \cap B \rc < \lc {f}_{s}(n) \cap B \rc$.
\end{enumerate}
Now we will define a finitary ${p}_{0} \in {\P}_{0}$ with $p \leq {p}_{0}$.
Let ${\A}_{{p}_{0}} = {F}_{r}$.
We define by induction on $n \in \omega$ sequences ${\bar{X}}_{n} = \langle {X}_{A, n}: A \in {\A}_{{p}_{0}}\rangle$ such that $\forall n \in \omega \forall A \in {\A}_{{p}_{0}}\[{X}_{A, n} \in {\DD}_{p, A} \wedge {X}_{A, n + 1} \subset {X}_{A, n}\]$.
Define ${X}_{A, 0} = {\Phi}_{r}(A)$, for all $A \in {\A}_{{p}_{0}}$.
Suppose that ${\bar{X}}_{n}$ having the required properties is given for some $n \in \omega$.
For each $A \in {\A}_{{p}_{0}}$, define ${X}_{A, n + 1} = {X}_{A, n} \cap \left( \bigcap \left\{ {\pi}_{p, B, A}'' {X}_{B, n}: B \in {\A}_{{p}_{0}} \wedge A \; {\subset}^{\ast} \; B \right\} \right)$.
It is easy to see that ${\bar{X}}_{n + 1}$ has the required properties.
This completes the definition of the ${\bar{X}}_{n}$.
Now define ${\DD}_{{p}_{0}, A} = \{a \subset \omega: \exists n \in \omega \[{X}_{A, n} \; {\subset}^{\ast} \; a \]\}$, for each $A \in {\A}_{{p}_{0}}$.
Note $\forall A \in {\A}_{{p}_{0}}\forall n \in \omega\[{X}_{A, n} \in {\DD}_{{p}_{0}, A}\]$.
Let ${\D}_{{p}_{0}} = \langle {\DD}_{{p}_{0}, A}: A \in {\A}_{{p}_{0}}\rangle$.
Finally, for any $A, B \in {\A}_{{p}_{0}}$ with $A \; {\subset}^{\ast} \; B$, let ${\pi}_{{p}_{0}, B, A} = {\pi}_{p, B, A}$ and let ${\C}_{{p}_{0}} = \langle {\pi}_{{p}_{0}, B, A}: B, A \in {\A}_{{p}_{0}} \wedge A \; {\subset}^{\ast} \; B \rangle$.
Then ${p}_{0} = \langle {\A}_{{p}_{0}}, {\C}_{{p}_{0}} , {\D}_{{p}_{0}} \rangle$ is in ${\P}_{0}$, $p \leq {p}_{0}$, and ${p}_{0}$ is finitary.
So by hypothesis we can fix ${q}_{0} \in \Q$ inducing ${p}_{0}$.
Since ${\A}_{{p}_{0}}$ and ${\BB}_{r}$ are both finite, it is possible to find $m \in \omega$ such that:
\begin{enumerate}
 \item[(12)]
 for each $A \in {\BB}_{r}$, ${I}_{{q}_{0}, m} \cap A \neq 0$ iff $A$ is infinite; moreover for every infinite $A \in {\BB}_{r}$, $\lc A \cap {I}_{{q}_{0}, m} \rc \geq \lc A \cap {f}_{s}(n) \rc$;
 \item[(13)]
 for each $A \in {\A}_{{p}_{0}}$, ${u}^{{q}_{0}, m}_{A \cap {I}_{{q}_{0}, m}} \subset {X}_{A, 0}$;
 \item[(14)]
 for all $A, B \in {\A}_{{p}_{0}}$ with $A \; {\subset}^{\ast} \; B$, ${\pi}_{{p}_{0}, B, A} \restrict {u}^{{q}_{0}, m}_{B \cap {I}_{{q}_{0},m}} = {\pi}_{{q}_{0}, B \cap {I}_{{q}_{0}, m}, A \cap {I}_{{q}_{0}, m}}$;
 \item[(15)]
 if $n > 0$, then $\nor({u}^{{q}_{0}, m}) > \nor({g}_{r}(n - 1))$ and for every $a \subset {f}_{r}(n - 1)$ and every $b \subset {I}_{{q}_{0}, m}$, $\min({u}^{{q}_{0}, m}_{b}) > \max({\left( {g}_{r}(n - 1) \right)}_{a})$.
\end{enumerate}
Let $\{{A}_{0}, \dotsc, {A}_{l}\}$ enumerate the members of ${\A}_{{p}_{0}}$.
For each $\sigma \in {2}^{l + 1}$ define ${b}_{\sigma} = \left( \bigcap \left\{{A}_{i}: \sigma(i) = 0 \right\} \right) \cap \left( \bigcap \left\{ \omega \setminus {A}_{i}: \sigma(i) = 1 \right\}\right)$ (in this definition $\bigcap 0 = \omega$).
Let $T = \{\sigma \in {2}^{l + 1}: {b}_{\sigma} \ \text{is infinite}\}$.
Because of (9), (10), and (12), ${f}_{s}(n) = {\bigcup}_{\sigma \in T} \left( {b}_{\sigma} \cap {f}_{s}(n)\right)$ and ${I}_{{q}_{0}, m} = {\bigcup}_{\sigma \in T}\left( {b}_{\sigma} \cap {I}_{{q}_{0}, m}\right)$.
Also if $\sigma \neq \tau$, then ${b}_{\sigma} \cap {b}_{\tau} = 0$ and if $\sigma \in T$, then $\lc {b}_{\sigma} \cap {I}_{{q}_{0}, m}\rc \geq \lc {b}_{\sigma} \cap {f}_{s} (n) \rc \neq 0$.
Therefore there is an onto map $h: {I}_{{q}_{0}, m} \rightarrow {f}_{s}(n)$ such that $\forall \sigma \in T\[{h}^{-1}({b}_{\sigma} \cap {f}_{s}(n)) = {b}_{\sigma} \cap {I}_{{q}_{0}, m}\]$.
Let ${g}_{s}(n) = h\[{u}^{{q}_{0}, m}\]$.
Then by Lemmas \ref{lem:shift1} and \ref{lem:shift2}, ${g}_{s}(n)$ is a creature acting on ${f}_{s}(n)$, and if $n > 0$, then $\nor({g}_{s}(n)) > \nor({g}_{r}(n - 1))$.
Also if $a \subset {f}_{s}(n)$, then ${\left( {g}_{s} (n)\right)}_{a} = {u}^{{q}_{0}, m}_{{h}^{-1}(a)} \subset \omega$, and if $n > 0$, then for any $x \subset {f}_{r}(n - 1)$, $\max({\left( {g}_{r}(n - 1)\right)}_{x}) < \min({\left( {g}_{s}(n) \right)}_{a})$.
So if we define ${n}_{s} = n + 1 $, ${f}_{s} = {{f}_{r}}^{\frown}{\langle {f}_{s}(n) \rangle}$, ${g}_{s} = {{g}_{r}}^{\frown}{\langle {g}_{s}(n) \rangle}$, ${F}_{s} = {F}_{r}$, and ${\Phi}_{r} = {\Phi}_{s}$, then $s \in \Ra$.
We check that $s \preceq r$.
Clause (4) is obvious and clause (5) follows from (9) and (10).
Since ${n}_{s} \setminus {n}_{r} = \{n\}$, clause (6) just amounts to the second part of clause (11).

In order to check (7) and (8), we first make a preliminary observation.
For each $0 \leq i \leq l$, put ${T}_{i} = \{\sigma \in T: \sigma(i) = 0\}$.
Because of (9), (10), and (12) ${A}_{i} \cap {f}_{s}(n) = \bigcup \{{b}_{\sigma} \cap {f}_{s}(n): \sigma \in {T}_{i}\}$ and ${A}_{i} \cap {I}_{{q}_{0}, m} = \bigcup \{{b}_{\sigma} \cap {I}_{{q}_{0}, m}: \sigma \in {T}_{i}\}$.
Therefore for any $0 \leq i \leq l$, ${h}^{-1}({A}_{i} \cap {f}_{s}(n)) = \bigcup \{{h}^{-1}({b}_{\sigma} \cap {f}_{s}(n)): \sigma \in {T}_{i}\} = \bigcup \{{b}_{\sigma} \cap {I}_{{q}_{0}, m}: \sigma \in {T}_{i}\} = {A}_{i} \cap {I}_{{q}_{0}, m}$.
With this observation in hand, let us check (7) and (8).
Take any $A \in {F}_{r} = {\A}_{{p}_{0}}$.
There is $0 \leq i \leq l$ such that $A = {A}_{i}$ and ${\left( {g}_{s}(n) \right)}_{\left( {A}_{i} \cap {f}_{s}(n)\right)} = {u}^{{q}_{0}, m}_{{h}^{-1}({A}_{i} \cap {f}_{s}(n))} = {u}^{{q}_{0}, m}_{{A}_{i} \cap {I}_{{q}_{0}, m}} \subset {X}_{{A}_{i}, 0} = {\Phi}_{r}({A}_{i})$, as needed for (7).
For (8), fix $A, B \in {F}_{r} = {\A}_{{p}_{0}}$ with $A \; {\subset}^{\ast} \; B$.
Then for some $0 \leq i, j \leq l$, $A = {A}_{i}$ and $B = {A}_{j}$.
Observe that ${A}_{i} \setminus {A}_{j}$ is a finite member of ${\BB}_{r}$ because ${A}_{i} \; {\subset}^{\ast} \; {A}_{j}$.
Therefore by (9) $\left( {A}_{i} \setminus {A}_{j} \right) \cap {f}_{s}(n) = 0$, and ${A}_{i} \cap {f}_{s}(n) \subset {A}_{j} \cap {f}_{s}(n)$.
Therefore ${\pi}_{{g}_{s}(n), {A}_{j} \cap {f}_{s}(n), {A}_{i} \cap {f}_{s}(n)}$ is defined as is equal to ${\pi}_{{u}^{{q}_{0}, m}, {h}^{-1}({A}_{j} \cap {f}_{s}(n)), {h}^{-1}({A}_{i} \cap {f}_{s}(n))}$. 
So ${\pi}_{p, {A}_{j}, {A}_{i}} \restrict \left( {\left( {g}_{s}(n) \right)}_{{A}_{j} \cap {f}_{s}(n)} \right) = {\pi}_{{p}_{0}, {A}_{j}, {A}_{i}} \restrict \left( {u}^{{q}_{0}, m}_{{h}^{-1}({A}_{j} \cap {f}_{s}(n))}\right) = {\pi}_{{p}_{0}, {A}_{j}, {A}_{i}} \restrict \left( {u}^{{q}_{0}, m}_{{A}_{j} \cap {I}_{{q}_{0}, m}}\right) = {\pi}_{{u}^{{q}_{0}, m}, {A}_{j} \cap {I}_{{q}_{0}, m}, {A}_{i} \cap {I}_{{q}_{0}, m}} = {\pi}_{{u}^{{q}_{0}, m}, {h}^{-1}({A}_{j} \cap {f}_{s}(n)), {h}^{-1}({A}_{i} \cap {f}_{s}(n))} = {\pi}_{{g}_{s}(n), {A}_{j} \cap {f}_{s}(n), {A}_{i} \cap {f}_{s}(n)}$, which is exactly what is needed.
This checks $s \preceq r$ and completes the proof of the claim.
\end{proof}
Using $\MA(\sigma-\textrm{centered})$ we can find a filter $G \subset \Ra$ that meets every member of $\{{R}_{A, Y}: A \in {\A}_{p} \wedge Y \in {\F}_{p, A}\} \cup \{{R}_{n}: n \in \omega\}$ (recall that $\c$ is regular under $\MA(\sigma-\textrm{centered})$).
Let $I = \bigcup\{{f}_{r}: r \in G\}$ and $U = \bigcup\{{g}_{r}: r \in G\}$.
Then it is clear that $q = \langle I, U \rangle \in \Q$.
We check that $q$ induces $p$.
Let $\BB$ be the Boolean subalgebra of $\Pset(\omega)$ generated by ${\A}_{p}$.
Take $A \in \BB$.
Then there exist ${A}_{0}, \dotsc, {A}_{l} \in {\A}_{p}$ such that $A \in {\BB}_{0}$, where ${\BB}_{0}$ is the Boolean subalgebra of $\Pset(\omega)$ generated by $\{{A}_{0}, \dotsc, {A}_{l}\}$.
For each $0 \leq i \leq l$, ${\F}_{p, {A}_{i}}$ is non-empty.
Choosing ${Y}_{i} \in {\F}_{p, {A}_{i}}$, ${R}_{{A}_{i}, {Y}_{i}}$ is a dense open set met by $G$.
So there is $r \in G \cap \left( {\bigcap}_{i \leq l} {R}_{{A}_{i}, {Y}_{i}}\right)$.
Then $A \in {\BB}_{r}$.
For any $n \geq {n}_{r}$ there is $t \in G$ such that $t \preceq r$ and $n + 1 < {n}_{t}$.
Then if $A$ is infinite, then since $n + 1 \in {n}_{t} \setminus {n}_{r}$, by (6), we have $\lc A \cap {I}_{n} \rc = \lc  A \cap {f}_{t}(n) \rc < \lc A \cap {f}_{t}(n + 1)\rc = \lc A \cap {I}_{n + 1}\rc$.
Thus if $A$ is infinite, then for all $n \geq {n}_{r}\[\lc A \cap {I}_{n} \rc < \lc A \cap {I}_{n + 1} \rc\]$, as needed for clause (1) of Definition \ref{def:induce}.
Next, take $A \in {\A}_{p}$ and $X \in {\DD}_{p, A}$.
Choose $Y \in {\F}_{p, A}$ with $Y \subset X$.
Again there is $r \in G \cap {R}_{A, Y}$.
Fix $n \geq {n}_{r}$.
There is $t \in G$ such that $t \preceq r$ and $n < {n}_{t}$.
Since $n \in {n}_{t} \setminus {n}_{r}$, by (7), $ {u}^{q, n}_{A \cap {I}_{q, n}} = {\left( {g}_{t}(n)  \right)}_{A \cap {f}_{t}(n)} \subset {\Phi}_{r}(A) \subset Y \subset X$.
So $\forallbutfin n \in \omega \[{u}^{q, n}_{A \cap {I}_{q, n}} \subset X \]$, as needed.
Finally take $A, B \in {\A}_{p}$ with $A \; {\subset}^{\ast} \; B$.
${\F}_{p, A}$ and ${\F}_{p, B}$ are non-empty.
Take ${Y}_{0} \in  {\F}_{p, A}$ and ${Y}_{1} \in {\F}_{p, B}$.
Since ${R}_{A, {Y}_{0}}$ and ${R}_{B, {Y}_{1}}$ are dense open sets met by $G$, we can find $r \in G \cap {R}_{A, {Y}_{0}} \cap {R}_{B, {Y}_{1}}$.
Then $A, B \in {F}_{r}$ and ${n}_{r} \in \omega$.
Fix $n \geq {n}_{r}$.
Then there is $t \in G$ such that $t \preceq r$ and $n < {n}_{t}$.
Since $n \in {n}_{t} \setminus {n}_{r}$, by (8), ${\pi}_{p, B, A} \restrict \left( {u}^{q, n}_{B \cap {I}_{q, n}} \right) = {\pi}_{p, B, A} \restrict \left( {\left( {g}_{t}(n) \right)}_{B \cap {f}_{t}(n)} \right) = {\pi}_{{g}_{t}(n), B \cap {f}_{t}(n), A \cap {f}_{t}(n)} = {\pi}_{q, B \cap {I}_{q, n}, A \cap {I}_{q, n}}$.
Therefore
\begin{align*}
 \forallbutfin n \in \omega \[{\pi}_{p, B, A} \restrict \left( {u}^{q, n}_{B \cap {I}_{q, n}} \right) = {\pi}_{q, B \cap {I}_{q, n}, A \cap {I}_{q, n}}\].
\end{align*}
This completes the verification that $q$ induces $p$ and hence also the proof of the lemma.
\end{proof}
\begin{Lemma} \label{lem:main1}
 Assume $\MA(\sigma-\textrm{centered})$
 For every $C \in \Pset(\omega)$, $\{p \in {\P}_{1}: \exists {C}^{\ast} \in {\A}_{p}\[C \; {=}^{\ast} \; {C}^{\ast}\] \}$ is dense in ${\P}_{1}$.
\end{Lemma}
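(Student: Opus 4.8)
The plan is to produce, for the given $C \in \Pset(\omega)$, a condition $p' \leq p$ whose family ${\A}_{p'}$ contains a set ${=}^{\ast}$-equivalent to $C$, and to arrange that $p'$ be induced by a single standard sequence; by the remark following Definition \ref{def:induce} this makes $p'$ automatically a $1$-condition, since any $q$ inducing $p'$ induces every coarser $0$-condition. First I would dispose of the trivial cases: if $C$ is finite or cofinite then $C \; {=}^{\ast} \; 0$ or $C \; {=}^{\ast} \; \omega$ and $0, \omega \in {\A}_{p}$, while if $C \; {=}^{\ast} \; A$ for some $A \in {\A}_{p}$ there is again nothing to do. So I may assume $C$ is infinite, co-infinite, and $C \; {\neq}^{\ast} \; A$ for every $A \in {\A}_{p}$; then I take ${C}^{\ast} = C$ and aim for ${\A}_{p'} = {\A}_{p} \cup \{C\}$.

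The heart of the argument is to build a standard sequence $q' \in \Q$ that simultaneously (i) induces $p$ and (ii) has blocks that ``see'' $C$, meaning clause (1) of Definition \ref{def:induce} holds for every infinite member of the Boolean algebra generated by ${\A}_{p} \cup \{C\}$, not merely of the algebra generated by ${\A}_{p}$. I would obtain $q'$ by rerunning the $\sigma$-centered forcing $\Ra$ from the proof of Lemma \ref{lem:rep} with one change: the finite set ${F}_{r}$ of tracked sets is now allowed to contain $C$ (and a further dense set forces $C \in {F}_{r}$ for some $r$ in the generic filter), the slicing requirements (the analogues of clauses (5), (6) and (9)--(12)) are imposed relative to the algebra generated by ${F}_{r}$ \emph{including} $C$, whereas the filter- and map-tracking requirements (the analogues of (7), (8), (13), (14)) are imposed only for the old sets $A, B \in {F}_{r} \cap {\A}_{p}$. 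Since $C$ carries no filter or map to be approximated, the surjection $h: {I}_{{q}_{0}, m} \rightarrow {f}_{s}(n)$ used to shift a creature onto the new block need only respect the coarser atoms of the algebra generated by ${F}_{r} \cap {\A}_{p}$; the finer splitting of ${f}_{s}(n)$ induced by $C$ plays no role in the shift, and the resulting creature ${g}_{s}(n) = h\[{u}^{{q}_{0}, m}\]$ still acts on all subsets of ${f}_{s}(n)$ by Lemmas \ref{lem:shift1} and \ref{lem:shift2}. The only new point in the density arguments is that the next block ${f}_{s}(n)$ can be chosen to meet, and to grow across, every infinite atom of the finer algebra; this is immediate since those atoms are infinite, after which one picks $m$ large enough that ${I}_{{q}_{0}, m}$ has at least as many points as ${f}_{s}(n)$ in each old atom. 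Reconciling the control over $C$ with a shift mechanism that only knows about the old sets is where I expect the real work to lie.

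Granting $q'$, I would define $p' = \langle {\A}_{p'}, {\C}_{p'}, {\D}_{p'}\rangle$ with ${\A}_{p'} = {\A}_{p} \cup \{C\}$, taking each ${\DD}_{p', A}$ (for $A \in {\A}_{p'}$, including $A = C$) to be the filter generated by ${\bigcup}_{n}{{u}^{q', n}_{A \cap {I}_{q', n}}}$, exactly as in Lemmas \ref{lem:ppoint1} and \ref{lem:nonempty}. For old pairs I keep ${\pi}_{p', B, A} = {\pi}_{p, B, A}$, and for a pair involving $C$ I define the map piecewise from the creature maps of $q'$: for $A \; {\subset}^{\ast}\; C$, set ${\pi}_{p', C, A}(k) = {\pi}_{q', C \cap {I}_{q', n}, A \cap {I}_{q', n}}(k)$ for $k \in {u}^{q', n}_{C \cap {I}_{q', n}}$ and $0$ elsewhere, and symmetrically when $C \; {\subset}^{\ast} \; B$, so that each such map is eventually equal to the corresponding creature map. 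With these definitions $q'$ induces $p'$: clause (1) of Definition \ref{def:induce} is (ii) above, clause (2) is immediate from the choice of the filters, and clause (3) holds for old pairs because $q'$ induces $p$ and for $C$-pairs by construction.

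Finally I would verify $p' \in {\P}_{0}$ and $p' \leq p$. Clauses (1)--(3) of Definition \ref{def:0condition} are routine; clause (4) for old pairs is Lemma \ref{lem:ppoint1}, and for the $C$-pairs it follows by the same computation, using that the relevant creature map is onto and that ${\DD}_{p', A}$ is generated by ${\bigcup}_{n}{{u}^{q', n}_{A \cap {I}_{q', n}}}$. For $p' \leq p$, note that ${\A}_{p'} \supset {\A}_{p}$, the old maps are unchanged, and ${\DD}_{p', A} \supset {\DD}_{p, A}$ for each old $A$, since $q'$ induces $p$ gives ${\bigcup}_{n}{{u}^{q', n}_{A \cap {I}_{q', n}}} \; {\subset}^{\ast} \; X$ for every $X \in {\DD}_{p, A}$. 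Thus $p' \in {\P}_{1}$, $p' \leq p$, and $C \in {\A}_{p'}$ with $C \; {=}^{\ast} \; C$, which proves the desired density.
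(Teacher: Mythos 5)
Your proposal is correct and follows essentially the same route as the paper: the paper likewise reruns the poset $\Ra$ of Lemma \ref{lem:rep} under a strengthened ordering $\trianglelefteq$ that imposes the slicing clauses relative to the Boolean algebra generated by ${F}_{r} \cup \{C\}$ while tracking filters and maps only for sets in ${\A}_{p}$, and then reads off the new condition from the generic standard sequence exactly as you describe. The only cosmetic difference is that the paper arranges the shift map $h$ to respect the refined atoms (using a factor-of-two surplus of ${I}_{q, m}$ in each old atom), whereas you correctly observe that respecting the old atoms alone suffices, since only the preimages of the sets in ${F}_{r}$ are needed for the filter- and map-tracking clauses.
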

\begin{proof}
Fix $p \in {\P}_{1}$.
If $\exists A \in {\A}_{p}\[A \; {=}^{\ast} \; C\]$, then there is nothing to do.
So assume that $\forall A \in {\A}_{p}\[A \; {\neq}^{\ast} \; C\]$.
Since $0, \omega \in {\A}_{p}$ this implies that both $C$ and $\omega \setminus C$ are infinite.
Let $\BB$ denote the Boolean subalgebra of $\Pset(\omega)$ generated by ${\A}_{p}$.
For each $A \in {\A}_{p}$ choose a family ${\F}_{p, A} \subset {\DD}_{p, A}$ as in (2) of Definition \ref{def:0condition}.
Let $\Ra$ be the poset defined in the proof of Lemma \ref{lem:rep} (with respect to the fixed condition $p$).
Let $\preceq$ also be as in the proof of Lemma \ref{lem:rep}.
We define a new ordering on $\Ra$.
For $r, s \in \Ra$, $s \trianglelefteq r$ iff $s \preceq r$ and
\begin{enumerate}
 \item
 let ${\BB}^{+}_{r}$ denote the Boolean subalgebra of $\Pset(\omega)$ generated by ${F}_{r} \cup \{C\}$; then for any $A \in {\BB}^{+}_{r}$, $\forall n \in {n}_{s} \setminus {n}_{r}\[A \cap {f}_{s}(n) \neq 0 \ \text{iff} \ A \ \text{is infinite}\]$;
 \item
 for each infinite $A \in {\BB}^{+}_{r}$, 
 \begin{align*}
 \forall n \in {n}_{s}\[n + 1 \in {n}_{s} \setminus {n}_{r} \implies \lc A \cap {f}_{s}(n) \rc < \lc A \cap {f}_{s}(n + 1)\rc\].
 \end{align*}
\end{enumerate}
Then it is easy to check that $\langle \Ra, \trianglelefteq \rangle$ is a $\sigma$-centered poset.
Moreover for each $A \in {\A}_{p}$ and $Y \in {\F}_{p, A}$ let ${R}_{A, Y} = \{s \in \Ra: A \in {F}_{s} \wedge {\Phi}_{s}(A) \subset Y\}$; then it is easy to check that ${R}_{A, Y}$ is dense open in $\langle \Ra, \trianglelefteq \rangle$.
Now we check the following claim.
\begin{Claim} \label{claim:main1}
 For each $n \in \omega$, ${R}_{n} = \{s \in \Ra: n < {n}_{s}\}$ is dense open in $\langle \Ra, \trianglelefteq \rangle$.
\end{Claim}
\begin{proof}
It is easy to check that ${R}_{n}$ is open in $\langle \Ra, \trianglelefteq \rangle$.
The proof that it is dense is by induction on $n$.
Fix $n$ and suppose that the claim holds for all $m < n$.
Take $r \in \Ra$.
By the inductive hypothesis and by the openness of the ${R}_{m}$ for $m < n$, we may assume that $n \subset {n}_{r}$.
If $n < {n}_{r}$, then there is nothing to do.
So we assume $n = {n}_{r}$ and define $s$ so that ${n}_{s} = n + 1$.
Also $0, \omega \in {\A}_{p}$ and ${\F}_{p, 0}$ and ${\F}_{p, \omega}$ are non-empty.
If ${Y}_{0} \in {\F}_{p, 0}$ and ${Y}_{1} \in {\F}_{p, \omega}$, then ${R}_{0, {Y}_{0}}$ and ${R}_{\omega, {Y}_{1}}$ are dense open in $\langle \Ra, \trianglelefteq \rangle$, and so we may assume that $0, \omega \in {F}_{r}$.
Since ${\BB}^{+}_{r}$ is finite, we can find a finite non-empty ${f}_{s}(n) \subset \omega$ such that:
\begin{enumerate}
 \item[(3)]
 for every finite $A \in {\BB}^{+}_{r}$, $A \cap {f}_{s}(n) = 0$;
 \item[(4)]
 for every infinite $A \in {\BB}^{+}_{r}$, $A \cap {f}_{s}(n) \neq 0$;
 \item[(5)]
 if $n > 0$, then $\min({f}_{s}(n)) > \max({f}_{r}(n - 1))$ and for every infinite $A \in {\BB}^{+}_{r}$, $\lc {f}_{s}(n) \cap A\rc > \lc {f}_{r}(n - 1) \cap A \rc$.
\end{enumerate}
By the Representation Lemma fix $q \in \Q$ that induces $p$.
Let ${\BB}_{r}$ be the Boolean subalgebra of $\Pset(\omega)$ generated by ${F}_{r}$.
As ${\BB}_{r}$ is a finite subset of $\BB$ and ${F}_{r}$ is a finite subset of ${\A}_{p}$, we can find $m \in \omega$ such that the following hold:
\begin{enumerate}
 \item[(6)]
 for each finite $A \in {\BB}_{r}$, $A \cap {I}_{q, m} = 0$; for each infinite $A \in {\BB}_{r}$, $A \cap {I}_{q, m} \neq 0$; moreover for each infinite $A \in {\BB}_{r}$, $\lc A \cap {I}_{q, m}\rc \geq 2 \lc A \cap {f}_{s}(n)\rc$;
 \item[(7)]
 for each $A \in {F}_{r}$, ${u}^{q, m}_{A \cap {I}_{q, m}} \subset {\Phi}_{r}(A)$;
 \item[(8)]
 for each $A, B \in {F}_{r}$, if $A \; {\subset}^{\ast} \; B$, then ${\pi}_{p, B, A} \restrict {u}^{q, m}_{B \cap {I}_{q, m}} = {\pi}_{q, B \cap {I}_{q, m}, A \cap {I}_{q, m}}$;
 \item[(9)]
 if $n > 0$, then $\nor({u}^{q, m}) > \nor({g}_{r}(n - 1))$ and for each $a \subset {f}_{r}(n - 1)$ and each $b \subset {I}_{q, m}$, $\min({u}^{q, m}_{b}) > \max({\left( {g}_{r}(n - 1) \right)}_{a})$.
\end{enumerate}
Let $\{{A}_{0}, \dotsc, {A}_{l + 1}\}$ enumerate the elements of ${F}_{r} \cup \{C\}$, with $\{{A}_{0}, \dotsc, {A}_{l}\}$ being an enumeration of ${F}_{r}$ and ${A}_{l + 1} = C$.
For each $\sigma \in {2}^{l + 2}$ define the set ${b}_{\sigma} = \left( \bigcap \{{A}_{i}: \sigma(i) = 0 \} \right) \cap \left( \bigcap \{\omega \setminus {A}_{i}: \sigma(i) = 1 \} \right)$ (in this definition $\bigcap 0 = \omega$).
It is clear that each ${b}_{\sigma} \in {\BB}^{+}_{r}$.
For each $\tau \in {2}^{l + 1}$ define ${b}_{\tau} = \left( \bigcap \{{A}_{i}: \tau(i) = 0 \} \right) \cap \left( \bigcap \{\omega \setminus {A}_{i}: \tau(i) = 1 \} \right)$.
Note that each ${b}_{\tau} \in {\BB}_{r}$.
Let $T = \{\sigma \in {2}^{l + 2}: {b}_{\sigma} \ \text{is infinite}\}$ and let $S = \{\tau \in {2}^{l + 1}: {b}_{\tau} \ \text{is infinite}\}$.
If $\sigma \in T$, then $\sigma \restrict l + 1 \in S$.
Also if $\tau \in S$, then at least one of ${\tau}^{\frown}{\langle 0 \rangle}$ or ${\tau}^{\frown}{\langle 1 \rangle}$ is in $T$.
For each $\tau \in S$, by (6), $\lc {b}_{\tau} \cap {I}_{q, m} \rc \geq 2\lc {b}_{\tau} \cap {f}_{s}(n)\rc$.
So we can find disjoint sets ${b}^{0}_{\tau}$ and ${b}^{1}_{\tau}$ such that $\lc {b}^{0}_{\tau} \rc \geq \lc {b}_{\tau} \cap {f}_{s}(n) \rc$, $\lc {b}^{1}_{\tau} \rc \geq \lc {b}_{\tau} \cap {f}_{s}(n) \rc$, and ${b}_{\tau} \cap {I}_{q, m} = {b}^{0}_{\tau} \cup {b}^{1}_{\tau}$.
For each $\sigma \in T$ define a set ${c}_{\sigma}$ as follows.
If both ${\left( \sigma \restrict l + 1 \right)}^{\frown}{\langle 0 \rangle}$ and ${\left( \sigma \restrict l + 1 \right)}^{\frown}{\langle 1 \rangle}$ are members of $T$, then ${c}_{\sigma} = {b}^{\sigma(l + 1)}_{\left( \sigma \restrict l + 1 \right)}$.
Otherwise ${c}_{\sigma} = {b}_{\left( \sigma \restrict l + 1 \right)} \cap {I}_{q, m}$.
It is easy to check that ${f}_{s}(n) = {\bigcup}_{\sigma \in T}({b}_{\sigma} \cap {f}_{s}(n))$ and that ${I}_{q, m} = {\bigcup}_{\sigma \in T}{({c}_{\sigma} \cap {I}_{q, m})}$.
Also for each $\sigma, \sigma' \in T$, if $\sigma \neq \sigma'$, then ${c}_{\sigma} \cap {c}_{\sigma'} = 0$ and ${b}_{\sigma} \cap {b}_{\sigma'} = 0$.
Moreover for each $\sigma \in T$, $\lc {b}_{\sigma} \cap {f}_{s}(n) \rc \leq \lc {c}_{\sigma} \cap {I}_{q, m} \rc$, and ${b}_{\sigma} \cap {f}_{s}(n) \neq 0$.
So there is an onto map $h: {I}_{q, m} \rightarrow {f}_{s}(n)$ such that $\forall \sigma \in T \[{h}^{-1}({b}_{\sigma} \cap {f}_{s}(n)) = {c}_{\sigma} \cap {I}_{q, m}\]$.
For each $0 \leq i \leq l$, let ${T}_{i} = \{\sigma \in T: \sigma(i) = 0\}$.
It is easy to check that for each $0 \leq i \leq l$, ${A}_{i} \cap {f}_{s}(n) = {\bigcup}_{\sigma \in {T}_{i}}\left( {b}_{\sigma} \cap {f}_{s}(n)\right)$ and ${A}_{i} \cap {I}_{q, m} = {\bigcup}_{\sigma \in {T}_{i}}\left( {c}_{\sigma} \cap {I}_{q, m} \right)$.
Therefore for any $0 \leq i \leq l$, ${h}^{-1}({A}_{i} \cap {f}_{s}(n)) = {\bigcup}_{\sigma \in {T}_{i}}\left( {h}^{-1}\left( {b}_{\sigma} \cap {f}_{s}(n) \right)\right) = {\bigcup}_{\sigma \in {T}_{i}}\left( {c}_{\sigma} \cap {I}_{q, m} \right) = {A}_{i} \cap {I}_{q, m}$.
Define ${g}_{s}(n) = h\[{u}^{q, m}\]$.
Then ${g}_{s}(n)$ is a creature acting on ${f}_{s}(n)$ and if $n > 0$, then $\nor({g}_{s}(n)) > \nor({g}_{r}(n - 1))$.
Also if $a \subset {f}_{s}(n)$, ${\left( {g}_{s}(n) \right)}_{a} = {u}^{q, m}_{{h}^{-1}(a)} \subset \omega$ such that if $n > 0$, then for all $x \subset {f}_{r}(n - 1)$, $\max({\left({g}_{r}(n - 1)\right)}_{x}) < \min({\left( {g}_{s}(n)\right)}_{a})$.
Therefore if we let ${n}_{s} = n + 1$, ${f}_{s} = {{f}_{r}}^{\frown}{\langle {f}_{s}(n) \rangle}$, ${g}_{s} = {{g}_{r}}^{\frown}{\langle {g}_{s}(n)\rangle}$, ${F}_{s} = {F}_{r}$, and ${\Phi}_{s} = {\Phi}_{r}$, then $s = \langle {f}_{s}, {g}_{s}, {F}_{s}, {\Phi}_{s} \rangle$ is a member of $\Ra$.
We check that $s \trianglelefteq r$.
Clause (1) follows from (3) and (4), while (2) is a consequence of (5).
Next, to see that $s \preceq r$, note that (4) of Lemma \ref{lem:rep} is obvious, while (5) of Lemma \ref{lem:rep} follows from (1).
(6) of Lemma \ref{lem:rep} is by (2).
Next, take $A \in {F}_{r}$.
Then $A = {A}_{i}$ for some $0 \leq i \leq l$.
So by (7) ${\left( {g}_{s}(n) \right)}_{\left( A \cap {f}_{s}(n) \right)} = {u}^{q, m}_{{h}^{-1}(A \cap {f}_{s}(n))} = {u}^{q, m}_{A \cap {I}_{q, m}} \subset {\Phi}_{r}(A)$.
Finally take $A, B \in {F}_{r}$ and suppose $A \; {\subset}^{\ast} \; B$.
Note that $A \setminus B$ is a finite member of ${\BB}_{r}$.
So ${f}_{s}(n) \cap (A \setminus B) = 0$.
Hence $A \cap {f}_{s}(n) \subset B \cap {f}_{s}(n) \subset {f}_{s}(n)$.
Therefore ${\pi}_{{g}_{s}(n), B \cap {f}_{s}(n), A \cap {f}_{s}(n)}$ is defined and is equal to ${\pi}_{q, {h}^{-1}(B \cap {f}_{s}(n)), {h}^{-1}(A \cap {f}_{s}(n))}$, which in turn equals ${\pi}_{q, B \cap {I}_{q, m}, A \cap {I}_{q, m}}$.
By (8), ${\pi}_{q, B \cap {I}_{q, m}, A \cap {I}_{q, m}} = {\pi}_{p, B, A} \restrict {u}^{q, m}_{B \cap {I}_{q, m}} = {\pi}_{p, B, A} \restrict \left( {\left( {g}_{s}(n) \right)}_{B \cap {f}_{s}(n)} \right)$ because ${\left( {g}_{s}(n) \right)}_{B \cap {f}_{s}(n)} = {u}^{q, m}_{{h}^{-1}(B \cap {f}_{s}(n))} = {u}^{q, m}_{B \cap {I}_{q, m}}$.
This concludes the verification that $s \trianglelefteq r$ and hence the proof of the claim.
\end{proof}
Let $G \subset \Ra$ be a filter meeting all the dense open sets in $\{{R}_{n}: n \in \omega\} \cup \{{R}_{A, Y}: A \in {\A}_{p} \wedge Y \in {\F}_{p, A}\}$.
Let $I = {\bigcup}_{r \in G}{{f}_{r}}$ and $U = {\bigcup}_{r \in G}{{g}_{r}}$, and let ${q}_{0} = \langle I, U \rangle$.
Then ${q}_{0} \in \Q$.
Let ${\A}_{{p}_{0}} = {\A}_{p} \cup \{C\}$.
Then ${\A}_{p} \subset {\A}_{{p}_{0}} \subset \Pset(\omega)$, $\lc {\A}_{{p}_{0}} \rc < \c$, and $\forall A, B \in {\A}_{{p}_{0}}\[A \neq B \implies A \; {\neq}^{\ast} \; B\]$.
Let ${\BB}_{0}$ be the Boolean subalgebra of $\Pset(\omega)$ generated by ${\A}_{{p}_{0}}$.
Let $A$ be an infinite member of ${\BB}_{0}$.
There is a finite set $F \subset {\A}_{p}$ such that $A$ is in the Boolean subalgebra of $\Pset(\omega)$ generated by $F \cup \{C\}$.
Fix $r \in G$ such that $F \subset {F}_{r}$.
Then $A$ is an infinite member of ${\BB}^{+}_{r}$.
For any $n \geq {n}_{r}$, $\lc A \cap {I}_{{q}_{0}, n} \rc < \lc A \cap {I}_{{q}_{0}, n + 1}\rc$ because of (2).
Therefore, for any infinite $A \in {\BB}_{0}$, $\forallbutfin n \in \omega \[\lc A \cap {I}_{{q}_{0}, n} \rc < \lc A \cap {I}_{{q}_{0}, n + 1}\rc\]$.
It is also easy to see that ${q}_{0}$ induces $p$.
Now for each $A \in {\A}_{{p}_{0}}$, let ${X}_{A} = {\bigcup}_{n \in \omega} {u}^{{q}_{0}, n}_{{I}_{{q}_{0}, n} \cap A}$ and let ${\DD}_{{p}_{0}, A} = \{a \subset \omega: {X}_{A} \; {\subset}^{\ast} \; a \}$.
Put ${\D}_{{p}_{0}} = \langle {\DD}_{{p}_{0}, A}: A \in {\A}_{{p}_{0}}\rangle$.
For $A, B \in {\A}_{{p}_{0}}$ with $A \; {\subset}^{\ast} \; B$, if $A, B \in {\A}_{p}$, then define ${\pi}_{{p}_{0}, B, A} = {\pi}_{p, B, A}$.
If either $A$ or $B$ belongs to ${\A}_{{p}_{0}} \setminus {\A}_{p}$, then define ${\pi}_{{p}_{0}, B, A}: \omega \rightarrow \omega$ as follows.
Given $k \in \omega$, if $k \in {X}_{B}$, then there is a unique $n \in \omega$ such that $k \in {u}^{{q}_{0}, n}_{{I}_{{q}_{0}, n} \cap B}$.
If $A \cap {I}_{{q}_{0}, n} \subset B \cap {I}_{{q}_{0}, n}$, then ${\pi}_{{p}_{0}, B, A}(k) = {\pi}_{{q}_{0}, B \cap {I}_{{q}_{0}, n}, A \cap {I}_{{q}_{0}, n}}(k)$.
If either $A \cap {I}_{{q}_{0}, n} \not\subset B \cap {I}_{{q}_{0}, n}$ or if $k \notin {X}_{B}$, then put ${\pi}_{{p}_{0}, B, A}(k)= 0$.
Let ${\C}_{{p}_{0}} = \langle {\pi}_{{p}_{0}, B, A}: A, B \in {\A}_{{p}_{0}} \wedge A \; {\subset}^{\ast} \; B\rangle$ and let ${p}_{0} = \langle {\A}_{{p}_{0}}, {\C}_{{p}_{0}}, {\D}_{{p}_{0}}\rangle$.
Then it is not hard to see that ${p}_{0} \in {\P}_{0}$, ${p}_{0} \leq p$, and that ${q}_{0}$ induces ${p}_{0}$.
Hence ${q}_{0}$ also induces any ${p}_{1} \in {\P}_{0}$ with ${p}_{0} \leq {p}_{1}$.
So ${p}_{0} \in {\P}_{1}$ and ${p}_{0} \leq p$.
As $C \in {\A}_{{p}_{0}}$, this concludes the proof of the lemma.
\end{proof}
\begin{remark} \label{rem:key}
We now make some simple observations that will be useful for the remaining part of the proof.
Suppose $q \in \Q$.
Suppose $\langle {k}_{n}: n \in \omega \rangle \subset \omega$ is a sequence such that $\forall n \in \omega \[{k}_{n} < {k}_{n + 1}\]$.
For each $n \in \omega$, put ${I}_{{q}_{0}, n} = {I}_{q, {k}_{n}}$.
Suppose also that for each $n \in \omega$, we are given ${u}^{{q}_{0}, n} \in \Sigma({u}^{q, {k}_{n}})$ in such a way that for all $n \in \omega$, $\nor({u}^{{q}_{0}, n}) < \nor({u}^{{q}_{0}, n + 1})$.
Then if we let ${I}_{{q}_{0}} = \langle {I}_{{q}_{0}, n}: n \in \omega \rangle$, ${U}_{{q}_{0}} =\langle {u}^{{q}_{0}, n}: n \in \omega \rangle$, and ${q}_{0} = \langle {I}_{{q}_{0}}, {U}_{{q}_{0}}\rangle$, then ${q}_{0} \in \Q$.
Moreover, if $p \in {\P}_{0}$ and $q$ induces $p$, then ${q}_{0}$ also induces $p$.
We can now define ${p}_{0}$ using $p$ and ${q}_{0}$ as follows.
Put ${\A}_{{p}_{0}} = {\A}_{p}$.
For each $A \in {\A}_{{p}_{0}}$, let ${X}_{A} = {\bigcup}_{n \in \omega} {u}^{{q}_{0}, n}_{{I}_{{q}_{0}, n} \cap A}$ and let ${\DD}_{{p}_{0}, A} = \{a \subset \omega: {X}_{A} \; {\subset}^{\ast} \; a\}$.
Put ${\D}_{{p}_{0}} = \langle {\DD}_{{p}_{0}, A}: A \in {\A}_{{p}_{0}}\rangle$.
Given $A, B \in {\A}_{{p}_{0}}$ with $A \; {\subset}^{\ast} \; B$, set ${\pi}_{{p}_{0}, B, A} = {\pi}_{p, B, A}$.
Define ${\C}_{{p}_{0}} = \langle {\pi}_{{p}_{0}, B, A}: A, B \in {\A}_{{p}_{0}} \wedge A \; {\subset}^{\ast} \; B \rangle$ and ${p}_{0} = \langle {\A}_{{p}_{0}}, {\C}_{{p}_{0}}, {\D}_{{p}_{0}} \rangle$.
Then ${p}_{0} \in {\P}_{0}$, ${p}_{0} \leq p$, and ${q}_{0}$ induces ${p}_{0}$.
Therefore, ${q}_{0}$ also induces any ${p}_{1} \in {\P}_{0}$ with ${p}_{0} \leq {p}_{1}$.
Hence ${p}_{0} \in {\P}_{1}$.
\end{remark}
\begin{Lemma} \label{lem:ultra}
 Suppose $p \in {\P}_{1}$ and $A \in {\A}_{p}$.
 Let $b \subset \omega$.
 There exists ${p}_{0} \in {\P}_{1}$, ${p}_{0} \leq p$ such that either $b \in {\DD}_{{p}_{0}, A}$ or $\omega \setminus b \in {\DD}_{{p}_{0}, A}$.
\end{Lemma}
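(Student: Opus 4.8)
The plan is to use the Representation Lemma to collapse $p$ to a single standard sequence, to split each relevant creature along $b$ by means of clause 2(a) of the norm, and then to reassemble the surviving sub-creatures using Remark \ref{rem:key}. The decision into $\DD_{p_0,A}$ will then be read off directly from the generator of the filter that Remark \ref{rem:key} attaches to $A$.

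First I would invoke Lemma \ref{lem:rep} to fix $q \in \Q$ that induces $p$. For each $n \in \omega$ I consider the finite set ${u}^{q,n}_{A \cap {I}_{q,n}}$ and partition it as ${u}^{q,n}_{A \cap {I}_{q,n}} = \left( {u}^{q,n}_{A \cap {I}_{q,n}} \cap b \right) \cup \left( {u}^{q,n}_{A \cap {I}_{q,n}} \setminus b \right)$. Whenever $\nor({u}^{q,n}) \geq 1$, clause 2(a) of Definition \ref{def:norm}, applied with $a = A \cap {I}_{q,n}$, yields ${v}^{n} \in \Sigma({u}^{q,n})$ and ${i}_{n} \in 2$ with $\nor({v}^{n}) \geq \nor({u}^{q,n}) - 1$ and ${v}^{n}_{A \cap {I}_{q,n}}$ contained in one of the two pieces; so ${v}^{n}_{A \cap {I}_{q,n}} \subset b$ when ${i}_{n} = 0$ and ${v}^{n}_{A \cap {I}_{q,n}} \subset \omega \setminus b$ when ${i}_{n} = 1$.

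Since $\langle \nor({u}^{q,n}): n \in \omega \rangle$ is strictly increasing by clause (4) of Definition \ref{def:q}, we have $\nor({v}^{n}) \geq \nor({u}^{q,n}) - 1 \to \infty$. By the pigeonhole principle I fix $i \in 2$ for which ${i}_{n} = i$ holds for infinitely many $n$, and then extract a subsequence $\langle {k}_{n}: n \in \omega \rangle$ along which ${i}_{{k}_{n}} = i$ and $\langle \nor({v}^{{k}_{n}}): n \in \omega \rangle$ is strictly increasing. This is exactly the input required by Remark \ref{rem:key}: setting ${u}^{{q}_{0}, n} = {v}^{{k}_{n}} \in \Sigma({u}^{q, {k}_{n}})$ with strictly increasing norms produces ${q}_{0} \in \Q$ inducing $p$, together with a condition ${p}_{0} \in {\P}_{1}$ satisfying ${p}_{0} \leq p$, ${\A}_{{p}_{0}} = {\A}_{p}$ (so $A \in {\A}_{{p}_{0}}$), and ${q}_{0}$ induces ${p}_{0}$.

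It remains to read off the decision. By Remark \ref{rem:key} the filter ${\DD}_{{p}_{0}, A}$ is $\{a \subset \omega: {X}_{A} \; {\subset}^{\ast} \; a\}$, where ${X}_{A} = {\bigcup}_{n} {u}^{{q}_{0}, n}_{{I}_{{q}_{0}, n} \cap A} = {\bigcup}_{n} {v}^{{k}_{n}}_{A \cap {I}_{q, {k}_{n}}}$. Each term of this union is contained in $b$ (if $i = 0$) or in $\omega \setminus b$ (if $i = 1$), so ${X}_{A} \subset b$ or ${X}_{A} \subset \omega \setminus b$; in particular ${X}_{A} \; {\subset}^{\ast} \; b$ or ${X}_{A} \; {\subset}^{\ast} \; \omega \setminus b$, whence $b \in {\DD}_{{p}_{0}, A}$ in the first case and $\omega \setminus b \in {\DD}_{{p}_{0}, A}$ in the second, as desired. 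The one point demanding care is the subsequence extraction in the third paragraph: splitting along $b$ costs a single unit of norm at each coordinate, and Remark \ref{rem:key} applies only once the retained sub-creatures have \emph{strictly} increasing norms; this is salvaged precisely because the original norms tend to infinity, leaving enough room to thin out to a strictly increasing subsequence while keeping the color $i$ fixed.
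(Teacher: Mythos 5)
Your proposal is correct and follows essentially the same route as the paper's own proof: fix $q$ inducing $p$ via the Representation Lemma, split each ${u}^{q,n}_{A \cap {I}_{q,n}}$ along $b$ using clause 2(a) of the norm at the cost of one unit, pigeonhole on the colour, thin to a subsequence with strictly increasing norms, and conclude via Remark \ref{rem:key}. The only cosmetic difference is that the paper records the norm bound as $\nor({v}^{n}) \geq n-1$ (using $\nor({u}^{q,n}) \geq n$ for $n \geq 1$) rather than as $\nor({u}^{q,n}) - 1$, which is the same estimate.
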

\begin{proof}
 Let ${b}_{0} = b$ and ${b}_{1} = \omega \setminus b$.
 By the Representation Lemma fix $q \in \Q$ that induces $p$.
 Fix $n \geq 1$.
 Then $\nor({u}^{q, n}) \geq (n - 1) + 1$.
 We have that ${u}^{q, n}_{A \cap {I}_{q, n}} = \left( {u}^{q, n}_{A \cap {I}_{q, n}} \cap {b}_{0} \right) \cup \left( {u}^{q, n}_{A \cap {I}_{q, n}} \cap {b}_{1} \right)$.
 So there exists ${j}_{n} \in 2$ and ${v}^{n} \in \Sigma({u}^{q, n})$ such that $\nor({v}^{n}) \geq n - 1$ and ${v}^{n}_{A \cap {I}_{q, n}} \subset {u}^{q, n}_{A \cap {I}_{q, n}} \cap {b}_{{j}_{n}}$.
Clearly, there is $j \in 2$ such that $\{n \geq 1: {j}_{n} = j\}$ is infinite.
So it is possible to find a sequence $\langle {k}_{n}: n \in \omega \rangle \subset \omega$ such that for each $n \in \omega$, ${k}_{n} \geq 1$, ${j}_{{k}_{n}} = j$, ${k}_{n} < {k}_{n + 1}$, and $\nor({v}^{{k}_{n + 1}}) > \nor({v}^{{k}_{n}})$.
For each $n \in \omega$, let ${u}^{{q}_{0}, n} = {v}^{{k}_{n}} \in \Sigma({u}^{q, {k}_{n}})$.
Also $\nor({u}^{{q}_{0}, n}) < \nor({u}^{{q}_{0}, n + 1})$ holds for all $n \in \omega$.
Therefore if ${q}_{0}$ and ${p}_{0}$ are defined as in Remark \ref{rem:key}, then ${p}_{0} \in {\P}_{1}$ and ${p}_{0} \leq p$.
Moreover note that for each $n \in \omega$, ${u}^{{q}_{0}, n}_{{I}_{{q}_{0}, n} \cap A} = {v}^{{k}_{n}}_{{I}_{q, {k}_{n}} \cap A} \subset \left( {u}^{q, {k}_{n}}_{A \cap {I}_{q, {k}_{n}}} \cap {b}_{{j}_{{k}_{n}}} \right) \subset {b}_{j}$.
Hence ${X}_{A} \subset {b}_{j}$, whence ${b}_{j} \in {\DD}_{{p}_{0}, A}$, as needed.
\end{proof}
\begin{Def} \label{def:p-pointgame}
Let $\U$ be an ultrafilter on $\omega$.
The \emph{P-point game on} $\U$ is a two player game in which Players I and II alternatively choose sets ${a}_{n}$ and ${s}_{n}$ respectively, where ${a}_{n} \in \U$ and ${s}_{n} \in {\[{a}_{n}\]}^{< \omega}$.
Together they construct the sequence
\begin{align*}
{a}_{0}, {s}_{0}, {a}_{1}, {s}_{1}, \dotsc	\end{align*}
Player I wins iff ${\bigcup}_{n \in \omega}{{s}_{n}} \notin  \U$.
\end{Def}
A proof of the following useful characterization of P-points in terms of the P-point game can be found in Bartoszy{\'n}ski and Judah~\cite{BJ}.
\begin{Theorem}
An ultrafilter $\U$ is a P-point iff Player I does not have a winning strategy in the P-point game on $\U$.
\end{Theorem}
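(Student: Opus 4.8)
The plan is to prove the two implications separately, arguing the easy one by contraposition and reserving the combinatorial work for the hard one.

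For the implication (\,I has no winning strategy\,)\,$\implies$\,(\,$\U$ is a P-point\,), I would prove the contrapositive and manufacture a winning strategy for Player I out of a failure of P-pointness. If $\U$ is not a P-point, fix $f:\omega\to\omega$ with no $A\in\U$ on which $f$ is finite-to-one or constant, and set $P_n=f^{-1}(n)$. Then no $P_n$ lies in $\U$ (else $f$ would be constant on it), and no $A\in\U$ meets every $P_n$ in a finite set (that is exactly $f$ being finite-to-one on $A$). Player I's strategy is to ignore II and play $a_n=\omega\setminus(P_0\cup\dotsb\cup P_n)$; this is in $\U$ because it is the complement of a finite union of non-members. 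Any legal response satisfies $s_n\subset a_n$, hence $s_n\cap(P_0\cup\dotsb\cup P_n)=0$, so for each $i$ we get ${\left({\bigcup}_n s_n\right)}\cap P_i={\bigcup}_{n<i}(s_n\cap P_i)$, a finite set. Thus ${\bigcup}_n s_n$ meets every $P_i$ finitely and so, by the choice of the $P_n$, lies outside $\U$; Player I wins. This part is entirely routine.

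For the reverse implication, fix a strategy $\sigma$ for Player I; it suffices to produce one run consistent with $\sigma$ whose union lies in $\U$. First I would normalize: replacing $\sigma$ by the strategy whose answer is the intersection of all of I's previous answers, I may assume I always plays a decreasing sequence $a_0\supset a_1\supset\dotsb$, since every run against the normalized strategy is still a legal run against the original one with the same outcome. Now the key point, and the only place P-pointness is used, is that there are only countably many positions $p=\langle s_0,\dotsc,s_{n-1}\rangle$, so $\{\sigma(p):p\text{ a position}\}$ is a countable subfamily of $\U$; apply the P-point property to obtain $B\in\U$ with $B\;{\subset}^{\ast}\;\sigma(p)$ for every $p$. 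The engine of the construction is then a simple \emph{capturability lemma}: for each position $p$, all but finitely many $b\in B$ satisfy $b\in\sigma(q)$ for some position $q$ extending $p$ (call such $b$ capturable from $p$). Indeed, the set of $b\in B$ with $b\notin\sigma(p)$ is just $B\setminus\sigma(p)$, which is finite by $B\;{\subset}^{\ast}\;\sigma(p)$, and non-capturability from $p$ is stronger than $b\notin\sigma(p)$; note also that capturability only shrinks as $p$ grows.

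Using this, I would run a fusion along the strategy tree: process the elements of $B$ in increasing order, and at each stage have II capture the least currently available element of $B$, i.e.\ the least element of $(B\cap a_n)\setminus{\bigcup}_{i<n}s_i$, which is nonempty since $B\cap a_n\;{=}^{\ast}\;B$. The main obstacle, which I expect to be the crux of the whole argument, is that capturing one point can push others out of the decreasing sets $a_n$ forever, so that a naive greedy play might strand infinitely many elements and leave ${\bigcup}_n s_n$ outside $\U$. The resolution is to show, via the capturability lemma applied at cofinally many positions of the constructed run, that the set of elements of $B$ that are \emph{never} captured is non-capturable from arbitrarily late positions and therefore cannot be cofinite in $B$; hence ${\bigcup}_n s_n\;{=}^{\ast}\;B\in\U$. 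Once this bookkeeping is verified, the run defeats $\sigma$, so no strategy for Player I is winning, and combining the two directions yields the theorem.
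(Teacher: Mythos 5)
The paper does not actually prove this statement: it quotes it and cites Bartoszy\'nski--Judah for the proof, so your argument must be judged on its own merits. Your first direction is correct and standard: from a witness $f$ to non-P-pointness, the fibers $P_n = f^{-1}(n)$ are not in $\U$, no member of $\U$ meets all of them finitely, and Player I's moves $a_n = \omega\setminus(P_0\cup\dotsb\cup P_n)$ force $\bigcup_n s_n$ to meet each $P_i$ finitely.

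The hard direction has a genuine gap, located exactly where you flag ``the crux of the whole argument.'' The greedy play --- capture at stage $n$ the single least element of $(B\cap a_n)\setminus\bigcup_{i<n}s_i$ --- does not defeat every non-winning strategy, and the proposed resolution does not repair it. Concretely, let $\U$ be a P-point containing the odd numbers (the symmetric example works if it contains the evens), and let $\sigma(p)=\omega\setminus\{2k+1 : k<n\}$ for every position $p$ of length $n$. Every value of $\sigma$ is cofinite, hence in $\U$; the values already decrease along any play; and $B=\omega$ is a legitimate output of your P-point step, since $B\;{\subset}^{\ast}\;\sigma(p)$ for every $p$. The greedy run then gives $s_n=\{2n\}$: at stage $n$ the numbers $1,3,\dotsc,2n-1$ are excluded and $0,2,\dotsc,2n-2$ already captured, so the least available element is $2n$. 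Thus $\bigcup_n s_n$ is the set of even numbers, which is not in $\U$, even though $\sigma$ is trivially non-winning (e.g.\ $s_n=\{2n,2n+1\}$ defeats it). The never-captured set here is the set of odd numbers: it is indeed not cofinite in $B$, exactly as your ``resolution'' asserts, yet it lies in $\U$. This exposes the non sequitur in your last step: from ``the stranded set is not cofinite in $B$'' you infer ``$\bigcup_n s_n\;{=}^{\ast}\;B$,'' which would need the stranded set to be \emph{finite} (or at least not in $\U$); ``not cofinite'' is far too weak. The capturability lemma only bounds the stranded elements visible from each \emph{fixed} position by the finite set $B\setminus\sigma(p)$; over infinitely many positions these finite sets can accumulate to an infinite set in $\U$, as above. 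A correct proof must control \emph{which} elements of $B$ are stranded, not merely how many per stage --- the standard argument has Player II play blocks of $B$ whose endpoints are computed in advance from $\sigma$ and $B$ by quantifying over the (finite) set of positions whose moves lie below the current threshold, plus an argument that the union of the captured blocks lands in $\U$. That machinery is absent from your sketch, so the theorem is not yet proved.
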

\begin{Lemma} \label{lem:localmap}
 Suppose $\VV$ is a P-point and $\U$ is any ultrafilter.
 Suppose $\phi: \VV \rightarrow \U$ is monotone and cofinal in $\U$.
 Then there exist $P \subset {\[\omega\]}^{< \omega} \setminus \{0\}$ and $f: P \rightarrow \omega$ such that the following things hold:
 \begin{enumerate}
  \item
  $\forall s, t \in P\[s \subset t \implies s = t\]$;
  \item
  $f$ is finite-to-one;
  \item
  $\forall a \in \VV\forall b \in \U\exists s \in P\[s \subset a \wedge f(s) \in b\]$.
 \end{enumerate}
\end{Lemma}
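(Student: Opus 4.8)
The plan is to begin by restating the target in filter-theoretic terms. Since $\U$ is an ultrafilter, a subset of $\omega$ meeting every member of $\U$ is itself in $\U$, so condition (3) is equivalent to the assertion that for every $a \in \VV$ the image $f''\{s \in P : s \subset a\}$ belongs to $\U$. I would then reduce condition (3) to the following single coupling property, call it $(\ast)$: for every $a \in \VV$ there is $s \in P$ with $s \subset a$ and $f(s) \in \phi(a)$. The reduction is clean: given $a \in \VV$ and $b \in \U$, cofinality of $\phi$ supplies $a' \in \VV$ with $\phi(a') \subset b$; applying $(\ast)$ to $a \cap a' \in \VV$ yields $s \in P$ with $s \subset a \cap a' \subset a$ and $f(s) \in \phi(a \cap a')$, and monotonicity gives $\phi(a \cap a') \subset \phi(a') \subset b$, so $f(s) \in b$ and $s \subset a$, which is exactly (3). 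Thus it suffices to build an antichain $P$ and a finite-to-one $f$ satisfying $(\ast)$.

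For the construction I would organize a recursion as a strategy for Player I in the P-point game on $\VV$, exploiting the game characterization of P-points stated above. Along the strategy tree, at the position reached after Player II's moves have accumulated to a finite set $t$, Player I plays a $\VV$-set $a_t$, chosen (using cofinality and monotonicity of $\phi$) to form a decreasing system whose images $\phi(a_t)$ shrink; each admissible accumulated set $t$ is a candidate member of $P$, and I would commit a value $f(t) \in \phi(\cdot)$ to it. The payoff is that Player I has \emph{no} winning strategy, and the same is true of the game relativized below any fixed $a \in \VV$ (since $\VV \restriction a$ is again a P-point). Hence for each $a \in \VV$ there is a branch of the relativized game whose union $a^{\infty} \in \VV$ satisfies $a^{\infty} \subset a$ and defeats the strategy; along this branch some enrolled support $s$ lies in $a^{\infty} \subset a$, and if its committed value lands in $\phi(a^{\infty})$ then, by monotonicity, $f(s) \in \phi(a^{\infty}) \subset \phi(a)$, delivering $(\ast)$ for $a$.

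Finite-to-one-ness and the antichain condition are handled by bookkeeping layered onto this recursion. I would keep $P$ an antichain by enrolling only $\subset$-minimal supports and discarding any later candidate comparable to one already chosen (this is clause (1)). To force $f$ finite-to-one I would apply the defining P-point property of $\VV$ to an auxiliary function recording, for each value, the stage at which it is claimed, passing to a $\VV$-set on which every value is used only finitely often; this is the exact analogue, in the present finitary setting, of the passage from an RK-map to an RB-map on a P-point, and it is why the hypothesis that \emph{$\VV$} (not $\U$) be a P-point is all that is needed.

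The main obstacle is the tension at the heart of the value commitment: $\phi$ is given only on $\VV$ and need not be finitely determined, yet $f(s)$ must be fixed from the finite set $s$, and property $(\ast)$ demands that this finitely committed value nevertheless lie in $\phi(a^{\infty})$ for the \emph{limit} $a^{\infty}$ produced by the branch — the image of a small set, not a large one. Extracting this finite-determination (a continuity-type behaviour for $\phi$) is precisely where the P-point game on $\VV$ must be used, and it is also where finite-to-one-ness and the covering requirement $(\ast)$ pull against each other: covering forces many supports carrying $\U$-typical values into every $a \in \VV$, while finite-to-one caps the supports per value. The absence of a winning strategy for Player I is exactly the device that reconciles them, simultaneously supplying the defeating branches needed for $(\ast)$ and permitting the thinning that renders $f$ finite-to-one.
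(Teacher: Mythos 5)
Your reduction of (3) to the coupling property $(\ast)$ is correct and matches what the paper does implicitly, and your plans for (1) (enroll only $\subset$-minimal supports) and for using the P-point game on $\VV$ as the engine are both on target. But there is a genuine gap exactly where you flag ``the main obstacle'': you never say how the value $f(s)$ is to be computed from the finite set $s$ so that it is guaranteed to land in $\phi(a)$ for the relevant infinite $a \in \VV$. Asserting that the absence of a winning strategy ``is exactly the device that reconciles'' the tension is a statement of intent, not an argument, and the sentence ``if its committed value lands in $\phi(a^{\infty})$ then \dots'' leaves the crucial implication unproved. The paper resolves this with a concrete device your sketch is missing: the trace function $\psi(x) = \bigcap\{\phi(a): a \in \VV \wedge x \subset a\}$. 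Any element of $\psi(t)$ automatically belongs to $\phi(a)$ for every $a \in \VV$ containing $t$, so one may safely set $f(t) = \min(\psi(t))$ on $P = \{t \in {\[b\]}^{< \omega}: \psi(t) \neq 0 \wedge \forall s \subsetneq t\[\psi(s) = 0\]\}$; the whole burden then shifts to showing that $\psi$ is nonempty on enough finite sets, namely that for $c \in \VV$ with $c \subset b$ and $n \in \phi(c)$ one already has $n \in \psi(c \cap ({\bigcup}_{i \leq n}{s}_{i}))$. That is what Player I's strategy must be designed to deliver: his $(n+1)$-st move diagonalizes against every $s \subset {\bigcup}_{i \leq n}{s}_{i}$ with $n \notin \psi(s)$ by intersecting with witnesses ${a}_{n, s} \in \VV$ satisfying $s \subset {a}_{n, s}$ and $n \notin \phi({a}_{n, s})$, so that any $c \subset b$ extending such an $s$ only beyond stage $n$ is forced into ${a}_{n, s}$ and hence has $n \notin \phi(c)$. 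Your strategy instead chooses the sets ${a}_{t}$ ``to form a decreasing system whose images $\phi({a}_{t})$ shrink,'' which does not produce this finite-determination claim.

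A secondary problem is your mechanism for finite-to-one-ness: applying the P-point property to an auxiliary function and ``passing to a $\VV$-set on which every value is used only finitely often,'' then thinning $P$, is both vague and in tension with $(\ast)$ --- discarding supports risks destroying the covering property, and you give no reason the thinned $P$ still meets every $a \in \VV$. In the paper's argument finite-to-one-ness is free: the key claim forces any $t \in P$ with $f(t) = n$ to satisfy $t \subset {\bigcup}_{m \leq n}{s}_{m}$, so ${f}^{-1}(\{n\})$ is contained in the power set of a fixed finite set. Also, the relativization to $\VV \restrict a$ is unnecessary: a single run of the game producing one $b \in \VV$ suffices, since for arbitrary $c \in \VV$ and $d \in \U$ one works inside $b \cap c \cap e$ where $\phi(e) \subset d$.
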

\begin{proof}
 Define $\psi: \Pset(\omega) \rightarrow \Pset(\omega)$ by $\psi(x) = \bigcap\{\phi(a): a \in \VV \wedge x \subset a\}$, for all $x \in \Pset(\omega)$.
 Note that $\psi$ is monotone.
 Also $\psi(0) = 0$.
 To see this, suppose for a contradiction that $k \in \psi(0)$.
 Then $\omega \setminus \{k\} \in \U$.
 Take $a \in \VV$ such that $\phi(a) \subset \omega \setminus \{k\}$.
 However since $k \in \psi(0)$, $k \in \phi(a)$, a contradiction.
 Now we define a strategy for Player I in the P-point game (on $\VV$) as follows.
 He first plays ${a}_{0} = \omega$.
 Given $n \in \omega$ and a partial play ${a}_{0}, {s}_{0}, \dotsc, {a}_{n}, {s}_{n}$, he considers $\Pset({\bigcup}_{i \leq n}{s}_{i})$.
 For each $s \in \Pset({\bigcup}_{i \leq n}{s}_{i})$, if $n \notin \psi(s)$, then he chooses ${a}_{n, s} \in \VV$ such that $s \subset {a}_{n, s}$ and yet $n \notin \phi({a}_{n, s})$.
 He plays 
 \begin{align*}
 {a}_{n + 1} = \left({a}_{n} \setminus {l}_{n} \right)\cap \left( \bigcap\{{a}_{n, s}: s \in \Pset({\bigcup}_{i \leq n}{s}_{i}) \wedge n \notin \psi(s)\} \right),
 \end{align*}
 where ${l}_{n} = \sup\{k + 1: k \in {\bigcup}_{i \leq n}{s}_{i} \} \in \omega$ (in this definition of ${a}_{n + 1}$, $\bigcap 0$ is taken to be $\omega$).
 Since this is not a winning strategy for Player I, there is a run ${a}_{0}, {s}_{0}, \dotsc, {a}_{n}, {s}_{n}, \dotsc$ of the P-point game in which he implements this strategy and looses.
 So $b = {\bigcup}_{n \in \omega}{{s}_{n}} \in \VV$.
 Note that by the definition of the strategy, $\forall n \in \omega\[{a}_{n + 1} \subset {a}_{n}\]$.
 Also since ${s}_{n + 1} \subset {a}_{n + 1}$, if $k \in {s}_{n}$ and $k' \in {s}_{n + 1}$, then $k < k'$.
 Let $P = \{t \in {\[b\]}^{< \omega}: \psi(t) \neq 0 \wedge \forall s \subsetneq t \[\psi(s) = 0\]\}$.
 Since $\psi(0) = 0$, $P \subset {\[\omega\]}^{< \omega} \setminus \{0\}$.
 It is clear that $P$ satisfies (1) by definition.
 Define $f: P \rightarrow \omega$ by $f(t) = \min(\psi(t))$, for all $t \in P$
 Now we claim the following.
 \begin{Claim} \label{claim:localmap1}
  For any $n \in \omega$ and any $c \in \VV$, if $c \subset b$ and $n \in \phi(c)$, then $n \in \psi\left(c \cap \left( {\bigcup}_{i \leq n}{s}_{i} \right) \right)$.
 \end{Claim}
 \begin{proof}
 Suppose not.
 Let $s = c \cap \left( {\bigcup}_{i \leq n}{s}_{i} \right)$.
 Since $n \notin \psi(s)$, ${a}_{n, s}$ exists and ${a}_{n + 1} \subset {a}_{n, s}$.
 Moreover, for any $m \geq n + 1$, ${s}_{m} \subset {a}_{m} \subset {a}_{n + 1} \subset {a}_{n, s}$.
 Therefore, $c = c \cap b = {\bigcup}_{m \in \omega} \left( c \cap {s}_{m} \right) = s \cup \left( {\bigcup}_{m \geq n + 1} \left(c \cap {s}_{m} \right)\right) \subset {a}_{n, s}$.
 Hence $\phi(c) \subset \phi({a}_{n, s})$, whence $n \notin \phi(c)$.
 \end{proof}
 Both (2) and (3) easily follow from Claim \ref{claim:localmap1}.
 For (2), fix $n \in \omega$ and suppose $t \in P$ is such that $f(t) = n$.
 Then $n \in \psi(t)$.
 Consider $c = t \cup \left( {\bigcup}_{m \geq n + 1} {s}_{m}\right)$.
 It is clear that $c \in \VV$, $t \subset c$, and $c \subset b$.
 So $n \in \phi(c)$.
 So by Claim \ref{claim:localmap1}, $n \in \psi\left(c \cap \left( {\bigcup}_{m \leq n}{s}_{m} \right)\right) = \psi\left(t \cap \left( {\bigcup}_{m \leq n}{s}_{m} \right)\right)$.
 Since $t \in P$, this implies that $t \cap \left( {\bigcup}_{m \leq n}{s}_{m} \right) = t$.
 Thus ${f}^{-1}(\{n\}) \subset \Pset({\bigcup}_{m \leq n}{s}_{m})$, which is finite.
 
 Next for (3), fix $c \in \VV$ and $d \in \U$.
 Let $e \in \VV$ be such that $\phi(e) \subset d$.
 Then $b \cap c \cap e \in \VV$, $\phi(b \cap c \cap e) \in \U$.
 So $\phi(b \cap c \cap e) \neq 0$.
 If $n \in \phi(b \cap c \cap e)$, then $n \in \psi(u)$, where $u = \left( b \cap c \cap e \right) \cap \left( {\bigcup}_{m \leq n}{s}_{m}\right)$.
 Thus $\psi(u) \neq 0$, and we may find $t \subset u$ that is $\subset$-minimal w.\@r.\@t.\@ the property that $\psi(t) \neq 0$.
 Then $t \in P$ and $t \subset u \subset b \cap c \cap e \subset c$, and $f(t) \in \psi(t)$.
 Since $t \subset e$ and $e \in \VV$, $f(t) \in \phi(e) \subset d$, as needed.
\end{proof}
\begin{Lemma} \label{lem:killingtukey}
Assume $\MA(\sigma-\textrm{centered})$.
 Suppose $p \in {\P}_{1}$.
 Suppose $A, B \in {\A}_{p}$ with $B \; {\not\subset}^{\ast} \; A$.
 Suppose that $P \subset {\[\omega\]}^{< \omega} \setminus \{0\}$ and $f: P \rightarrow \omega$ satisfy (1)-(2) of Lemma \ref{lem:localmap}.
 Then there exists ${p}_{0} \in {\P}_{1}$ such that ${p}_{0} \leq p$ and there exist sets $X \in {\DD}_{{p}_{0}, A}$ and $Y \in {\DD}_{{p}_{0}, B}$ such that $\forall s \in P\[s \subset X \implies f(s) \notin Y\]$.
\end{Lemma}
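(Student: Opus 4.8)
The plan is to keep $A$ and $B$ fixed and to produce $p_0$ by thinning a single standard sequence representing $p$, in the style of Lemma \ref{lem:ultra} and Remark \ref{rem:key}. First I would use the Representation Lemma (this is where $\MA(\sigma-\textrm{centered})$ enters) to fix $q \in \Q$ inducing $p$. Since $B \; {\not\subset}^{\ast} \; A$, the set $B \setminus A$ is infinite and lies in the Boolean algebra generated by ${\A}_p$, so by clause (1) of Definition \ref{def:induce} we have $\forallbutfin n\[(B \setminus A) \cap {I}_{q,n} \neq 0\]$; equivalently $B \cap {I}_{q,n} \not\subset A \cap {I}_{q,n}$ for all large $n$. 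This is exactly the hypothesis ``$b \not\subset a$'' needed to invoke clause 2(b) of Definition \ref{def:norm} with $a = A \cap {I}_{q,n}$ and $b = B \cap {I}_{q,n}$, and it is the only place where $B \; {\not\subset}^{\ast} \; A$ is used.

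The target sets will be the canonical generators. I would build a subsequence ${k}_0 < {k}_1 < \dotsb$ and creatures ${v}^n \in \Sigma({u}^{q, {k}_n})$ with $\nor({v}^n) < \nor({v}^{n+1})$, and then let ${q}_0, {p}_0$ be as in Remark \ref{rem:key}, so that ${p}_0 \in {\P}_1$, ${p}_0 \leq p$, and ${\DD}_{{p}_0, A}$, ${\DD}_{{p}_0, B}$ are generated by $X = {\bigcup}_n {v}^n_{A \cap {I}_{q, {k}_n}}$ and $Y = {\bigcup}_n {v}^n_{B \cap {I}_{q, {k}_n}}$ respectively. The reduction to a single per-block requirement is then clean, using that distinct blocks occupy disjoint value ranges (clause (4) of Definition \ref{def:q}): one checks $X \cap {u}^{q, {k}_n}_{A \cap {I}_{q,{k}_n}} = {v}^n_{A \cap {I}_{q,{k}_n}}$, and every element of $Y$ lies in exactly one block's $B$-range. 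Thus it suffices to secure, at each stage $n$,
\begin{align*}
(\star)_n \qquad \forall s \in P\[f(s) \in {v}^n_{B \cap {I}_{q,{k}_n}} \implies s \cap {u}^{q,{k}_n}_{A \cap {I}_{q,{k}_n}} \not\subset {v}^n_{A \cap {I}_{q,{k}_n}}\].
\end{align*}
Indeed, if $s \subset X$ and $f(s) \in Y$, choose the unique $n$ with $f(s) \in {v}^n_{B \cap {I}_{q,{k}_n}}$; then $(\star)_n$ puts a point of $s$ in the stage-$n$ $A$-range outside ${v}^n$, contradicting $s \subset X$.

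To arrange $(\star)_n$ while raising the norm, note that only finitely many $s \in P$ are relevant at stage $n$, namely those with $f(s) \in {u}^{q,{k}_n}_{B \cap {I}_{q,{k}_n}}$; this is precisely where the finite-to-one hypothesis (2) on $f$ is used, and disjointness of the $B$-ranges ``charges'' each $s$ to a unique block. For a relevant $s$, writing $e = s \cap {u}^{q,{k}_n}_{A \cap {I}_{q,{k}_n}}$, I would apply clause 2(b) to a function $F : \Pset({u}^{q,{k}_n}_{A \cap {I}_{q,{k}_n}}) \rightarrow {u}^{q,{k}_n}_{B \cap {I}_{q,{k}_n}}$ with $F(e) = f(s)$; the resulting $w \in \Sigma({u}^{q,{k}_n})$ with $F''\Pset({w}_{A \cap {I}_{q,{k}_n}}) \cap {w}_{B \cap {I}_{q,{k}_n}} = 0$ then realizes the instance of $(\star)_n$ for that $s$, and this persists under any further shrinking inside $\Sigma(w)$. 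Iterating clause 2(b) once per relevant $s$ produces a ${v}^n$ satisfying $(\star)_n$, whose norm has dropped from $\nor({u}^{q,{k}_n})$ by the number of relevant $s$. Finally ${p}_0$, $X$, and $Y$ are read off via Remark \ref{rem:key}.

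The main obstacle is exactly this norm bookkeeping: I must choose ${k}_n$ so that $\nor({u}^{q,{k}_n})$ exceeds $\nor({v}^{n-1})$ plus the number ${r}_n$ of $s$ charged to block $n$, where ${r}_n$ itself depends on the $B$-range of the block being selected. When $f''P$ is sparse this is automatic, but when $f''P$ meets every $B$-range densely one cannot simply delete all relevant values, and two delicate points emerge: (i) several relevant $s$ sharing a common trace $e$, which I would absorb by encoding their distinct $f$-values on distinct subsets of $e$ (so one application of clause 2(b) handles the whole fibre over $e$), and (ii) relevant $s$ with empty trace $e = 0$, which are genuinely dangerous only when every element of $s$ is an $A$-value of some block — the remaining ones being discarded using the disjointness of ${u}_{A \cap {I}_{q,{k}_n}}$ and ${u}_{B \cap {I}_{q,{k}_n}}$. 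Controlling ${r}_n$ against the available norm — by passing to a sufficiently sparse subsequence of blocks (harmlessly, since skipping a block removes its $B$-values from $Y$ and renders the $s$ charged there safe) so that only norm-manageably many relevant $s$ survive per block — is the technical heart of the argument, and is where I expect the real work to lie.
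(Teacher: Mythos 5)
Your overall architecture matches the paper's: fix $q \in \Q$ inducing $p$ via the Representation Lemma, thin a subsequence of blocks using clause 2(b) of Definition \ref{def:norm} (applicable because $B \setminus A$ is an infinite member of the generated Boolean algebra, so $B \cap {I}_{q,n} \not\subset A \cap {I}_{q,n}$ for large $n$), and read off ${p}_{0}$, $X$, $Y$ from Remark \ref{rem:key}; the observation that one application of 2(b) can handle a whole function's worth of pairs is also the paper's. But the combinatorial core --- making the number of applications of 2(b) at block $n$ a quantity that is known \emph{before} ${k}_{n}$ is chosen --- is precisely what you leave unresolved, and your proposed repairs do not close the gap. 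You charge a dangerous $s$ to the block containing $f(s)$, so you must treat every $s \in P$ with $f(s) \in {u}^{q,{k}_{n}}_{B \cap {I}_{q,{k}_{n}}}$. That count grows with ${k}_{n}$ (finite-to-one gives no uniform bound on fibres), so enlarging ${k}_{n}$ to gain norm enlarges the set of constraints in lockstep; sparsifying the subsequence only removes the $s$ charged to skipped blocks and does nothing for the blocks you keep. The fibre-encoding idea is also capped: the only subsets of ${u}^{q,{k}_{n}}_{A \cap {I}_{q,{k}_{n}}}$ guaranteed to lie in $\Pset({w}_{A \cap {I}_{q,{k}_{n}}})$ whenever the trace $e$ does are the subsets of $e$, so one application of 2(b) can guarantee the exclusion of at most ${2}^{\lc e \rc}$ values for the $s$ with trace $e$, while arbitrarily many relevant $s$ can share a trace (they may differ on other blocks); in particular for $e = 0$ you get one excluded value against possibly many cross-block $s$.

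The paper escapes this by inverting the charging. A dangerous ${s}^{\ast}$ is charged to the block containing $\max({s}^{\ast})$, and two auxiliary bounds --- $l(n)$, the sup of $f(s)$ over $s \in P$ contained in the first $n$ blocks' $A$-parts, and ${l}^{+}(n)$, the sup of $\bigcup {f}^{-1}(\{i\})$ over $i$ in the first $n$ blocks' $B$-parts (finite precisely by the finite-to-one hypothesis) --- are used to spread out the chosen blocks so that $f({s}^{\ast})$ is forced into the $B$-part of that same block. The per-block condition then quantifies over subsets $s$ of $x = {\bigcup}_{j<n}{u}^{{q}_{0},j}_{{I}_{q,{k}_{j}} \cap A}$, i.e.\ over traces on the already-thinned \emph{earlier} blocks; for each such $s$ a single application of 2(b) with $F(t) = f(s \cup t)$ kills all extensions $t$ into block $n$ at once. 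That is ${2}^{\lc x \rc}$ applications, a number fixed before ${k}_{n}$ is selected, so choosing ${k}_{n} \geq k + {2}^{\lc x \rc}$ (whence $\nor({u}^{q,{k}_{n}}) \geq k + {2}^{\lc x \rc}$) closes the norm bookkeeping. Without some such reorganization of which block each $s$ is charged to, your argument does not go through.
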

\begin{proof}
 Fix $q \in \Q$ that induces $p$.
 There is a $m \in \omega$ such that 
 \begin{align*}
 \forall n \geq m\[\lc \left( B \setminus A \right) \cap {I}_{q, n} \rc < \lc \left( B \setminus A \right) \cap {I}_{q, n + 1} \rc\]
 \end{align*}
 because $B \setminus A$ is an infinite member of the Boolean subalgebra of $\Pset(\omega)$ generated by ${\A}_{p}$.
 For each $n \in \omega$, consider ${\bigcup}_{k \leq n}{u}^{q, k}_{{I}_{q, k} \cap A}$.
 This is a finite subset of $\omega$.
 So $l(n) = \sup\left\{f(s): s \in P \wedge s \subset {\bigcup}_{k \leq n}{u}^{q, k}_{{I}_{q, k} \cap A}\right\} < \omega$.
 Similarly ${\bigcup}_{k \leq n}{u}^{q, k}_{{I}_{q, k} \cap B}$ is a finite subset of $\omega$.
 By (2) of Lemma \ref{lem:localmap}, for each $i \in {\bigcup}_{k \leq n}{u}^{q, k}_{{I}_{q, k} \cap B}$, $\bigcup \left( {f}^{-1}(\{i\}) \right)$ is a finite subset of $\omega$.
 So ${l}^{+}(n) = \sup\left(\bigcup \left\{\bigcup \left({f}^{-1}(\{i\})\right): i \in {\bigcup}_{k \leq n}{u}^{q, k}_{{I}_{q, k} \cap B}\right\}\right) < \omega$.
 Build two sequences $\langle {k}_{n}: n \in \omega \rangle$ and $\langle {u}^{{q}_{0}, n}: n \in \omega \rangle$ such that for each $n \in \omega$:
 \begin{enumerate}
  \item
  ${k}_{n} \in \omega$ and ${u}^{{q}_{0}, n} \in \Sigma({u}^{q, {k}_{n}})$;
  \item
  $\forall j < n\[{k}_{j} < {k}_{n}\]$ and $\forall j < n\[\nor({u}^{{q}_{0}, j}) < \nor({u}^{{q}_{0}, n})\]$;
  \item
  for any $s \subset \left({\bigcup}_{j < n}{u}^{{q}_{0}, j}_{{I}_{q, {k}_{j}} \cap A} \right)$ and any $t \subset \left({u}^{{q}_{0}, n}_{{I}_{q, {k}_{n}} \cap A} \right)$, if $s \cup t \in P$, then $f(s \cup t) \notin {u}^{{q}_{0}, n}_{{I}_{q, {k}_{n}} \cap B}$;
  \item
  $\forall j < n\[l({k}_{j}) < \min\left({u}^{{q}_{0}, n}_{{I}_{q, {k}_{n}} \cap B}\right)\]$ and $\forall j < n\[{l}^{+}({k}_{j}) < \min\left({u}^{{q}_{0}, n}_{{I}_{q, {k}_{n}} \cap A}\right)\]$.
 \end{enumerate}
 Suppose for a moment that such a sequence can be built.
 Let ${q}_{0}$ and ${p}_{0}$ be defined as in Remark \ref{rem:key}.
 Then ${p}_{0} \in {\P}_{1}$ and ${p}_{0} \leq p$.
 Let ${X}_{A} = {\bigcup}_{n \in \omega}{u}^{{q}_{0}, n}_{{I}_{q, {k}_{n}} \cap A}$ and ${X}_{B} = {\bigcup}_{n \in \omega}{u}^{{q}_{0}, n}_{{I}_{q, {k}_{n}} \cap B}$.
 Note that ${X}_{A} \in {\DD}_{{p}_{0}, A}$ and ${X}_{B} \in {\DD}_{{p}_{0}, B}$.
 Suppose towards a contradiction that there exists ${s}^{\ast} \in P$ such that ${s}^{\ast} \subset {X}_{A}$ and $f({s}^{\ast}) \in {X}_{B}$.
 As ${s}^{\ast}$ is a non-empty finite subset of $\omega$, $\max({s}^{\ast})$ exists and there exists a unique $n \in \omega$ such that $\max({s}^{\ast}) \in {u}^{{q}_{0}, n}_{{I}_{q, {k}_{n}} \cap A}$.
 Then ${s}^{\ast} = s \cup t$, where $s = {s}^{\ast} \cap \left( {\bigcup}_{j < n} {u}^{{q}_{0}, j}_{{I}_{q, {k}_{j}} \cap A} \right)$ and $t = {s}^{\ast} \cap {u}^{{q}_{0}, n}_{{I}_{q, {k}_{n}} \cap A}$.
 By clause (3), $f({s}^{\ast}) \notin {u}^{{q}_{0}, n}_{{I}_{q, {k}_{n}} \cap B}$.
 By the definition of $l({k}_{n})$, $f({s}^{\ast}) \leq l({k}_{n})$.
 So by clause (4), $\forall {n}^{\ast} > n\[f({s}^{\ast}) \notin {u}^{{q}_{0}, {n}^{\ast}}_{{I}_{q, {k}_{{n}^{\ast}}} \cap B}\]$.
 So it must be that $f({s}^{\ast}) \in {u}^{{q}_{0}, j}_{{I}_{q, {k}_{j}} \cap B}$ for some $j < n$.
 But then $\max({s}^{\ast}) \leq {l}^{+}({k}_{j})$ contradicting clause (4).
 Therefore there is no ${s}^{\ast} \in P$ such that ${s}^{\ast} \subset {X}_{A}$ and $f({s}^{\ast}) \in {X}_{B}$.
 Hence ${p}_{0}$ is as required.
 
 To build the sequences $\langle {k}_{n}: n \in \omega \rangle$ and $\langle {u}^{{q}_{0}, n}: n \in \omega \rangle$ proceed as follows.
 Fix $n \in \omega$ and suppose that $\langle {k}_{j}: j < n \rangle$ and $\langle {u}^{{q}_{0}, j}: j < n \rangle$ are given.
 Let $M = \{m\} \cup \{{k}_{j}: j < n\} \cup \{\nor({u}^{{q}_{0}, j}) + 1: j < n\} \cup \{l({k}_{j}): j < n\} \cup \{{l}^{+}({k}_{j}): j < n\}$.
 $M$ is a finite non-empty subset of $\omega$.
 Let $k = \max(M) < \omega$.
 Let $x = \left({\bigcup}_{j < n}{u}^{{q}_{0}, j}_{{I}_{q, {k}_{j}} \cap A} \right)$.
 $x$ is a finite set.
 Put ${k}_{n} = k + {2}^{\lc x \rc} < \omega$.
 Note that ${k}_{n} > k \geq m$.
 Therefore $\left( B \setminus A \right) \cap {I}_{q, {k}_{n}} \neq 0$.
 So $B \cap {I}_{q, {k}_{n}} \not\subset A \cap {I}_{q, {k}_{n}}$.
 Also $\nor({u}^{q, {k}_{n}}) \geq {k}_{n} = k + {2}^{\lc x \rc}$.
 Let $\langle {s}_{i}: i < {2}^{\lc x \rc}\rangle$ enumerate all subsets of $x$.
 Now build a sequence $\langle {v}^{i}: i < {2}^{\lc x \rc} \rangle$ such that for each $i < {2}^{\lc x \rc}$:
 \begin{enumerate}
  \item[(5)]
  ${v}^{i} \in \Sigma({u}^{q, {k}_{n}})$ and $\nor({v}^{i}) \geq k + {2}^{\lc x \rc} - i - 1$;
  \item[(6)]
  $\forall {i}^{\ast} < i\[{v}^{i} \in \Sigma\left({v}^{{i}^{\ast}}\right)\]$;
  \item[(7)]
  for any $t \subset {v}^{i}_{{I}_{q, {k}_{n}} \cap A}$, if ${s}_{i} \cup t \in P$, then $f({s}_{i} \cup t) \notin {v}^{i}_{{I}_{q, {k}_{n}} \cap B}$.
 \end{enumerate}
 This sequence is constructed by induction on $i < {2}^{\lc x \rc}$.
 Fix $i < {2}^{\lc x \rc}$ and suppose that ${v}^{{i}^{\ast}}$ is given for all ${i}^{\ast} < i$.
 If $i > 0$, let $v = {v}^{i - 1}$, if $i = 0$, then let $v = {u}^{q, {k}_{n}}$.
 In either case $v \in \cre({I}_{q, {k}_{n}})$ and $\nor(v) \geq (k + {2}^{\lc x \rc} -i - 1) + 1$.
 Now ${v}_{{I}_{q, {k}_{n}} \cap B}$ is a non-empty set.
 Fix ${z}_{0} \in {v}_{{I}_{q, {k}_{n}} \cap B}$.
 Define a function $F: \Pset({v}_{{I}_{q, {k}_{n}} \cap A}) \rightarrow {v}_{{I}_{q, {k}_{n}} \cap B}$ as follows.
 Given $t \in \Pset({v}_{{I}_{q, {k}_{n}} \cap A})$, if ${s}_{i} \cup t \in P$ and $f({s}_{i} \cup t) \in {v}_{{I}_{q, {k}_{n}} \cap B}$, then let $F(t) = f({s}_{i} \cup t)$.
 Otherwise let $F(t) = {z}_{0}$.
 There exists ${v}^{i} \in \Sigma(v)$ with $\nor({v}^{i}) \geq k + {2}^{\lc x \rc} -i - 1$ such that $F''\Pset({v}^{i}_{{I}_{q, {k}_{n}} \cap A}) \cap {v}^{i}_{{I}_{q, {k}_{n}} \cap B} = 0$.
 It is clear that ${v}^{i}$ is as needed.
 
 Now let $i = {2}^{\lc x \rc} - 1 < {2}^{\lc x \rc}$ and define ${u}^{{q}_{0}, n} = {v}^{i}$.
 By (5), ${v}^{i} \in \Sigma({u}^{q, {k}_{n}})$, and so (1) is satisfied.
 For (2) note that $\nor({v}^{i}) \geq k + {2}^{\lc x \rc} - i - 1 = k \geq \nor({u}^{{q}_{0}, j}) + 1 > \nor({u}^{{q}_{0}, j})$, for all $j < n$.
 Next to check (3) fix $s \subset \left({\bigcup}_{j < n}{u}^{{q}_{0}, j}_{{I}_{q, {k}_{j}} \cap A} \right) = x$ and $t \subset \left({u}^{{q}_{0}, n}_{{I}_{q, {k}_{n}} \cap A} \right)$.
 Suppose $s \cup t \in P$.
 Then $s = {s}_{{i}^{\ast}}$ for some ${i}^{\ast} \leq i$.
 It follows from (6) that ${u}^{{q}_{0}, n}_{{I}_{q, {k}_{n}} \cap A} \subset {v}^{{i}^{\ast}}_{{I}_{q, {k}_{n}} \cap A}$ and ${u}^{{q}_{0}, n}_{{I}_{q, {k}_{n}} \cap B} \subset {v}^{{i}^{\ast}}_{{I}_{q, {k}_{n}} \cap B}$.
 So by (7) applied to ${i}^{\ast}$ we have that $f(s \cup t) \notin {u}^{{q}_{0}, n}_{{I}_{q, {k}_{n}} \cap B}$.
 Finally for (4) note that ${u}^{{q}_{0}, n}_{{I}_{q, {k}_{n}} \cap A} \subset {u}^{q, {k}_{n}}_{{I}_{q, {k}_{n}} \cap A}$ and ${u}^{{q}_{0}, n}_{{I}_{q, {k}_{n}} \cap B} \subset {u}^{q, {k}_{n}}_{{I}_{q, {k}_{n}} \cap B}$.
 So $\min\left( {u}^{{q}_{0}, n}_{{I}_{q, {k}_{n}} \cap A} \right) \geq \min\left( {u}^{q, {k}_{n}}_{{I}_{q, {k}_{n}} \cap A} \right) \geq {k}_{n} > k \geq {l}^{+}({k}_{j})$, for all $j < n$, and $\min\left( {u}^{{q}_{0}, n}_{{I}_{q, {k}_{n}} \cap B} \right) \geq \min\left( {u}^{q, {k}_{n}}_{{I}_{q, {k}_{n}} \cap B} \right) \geq {k}_{n} > k \geq l({k}_{j})$, for all $j < n$.
 Thus ${u}^{{q}_{0}, n}$ and ${k}_{n}$ are as required.
\end{proof}
The following lemma is easy to check and tells us what to do at limit stages of the final inductive construction.
We leave the proof to the reader.
\begin{Lemma} \label{lem:limitconditiondef}
 Assume $\MA(\sigma-\textrm{centered})$.
 Let $\delta < \c$ be a limit ordinal.
 Suppose $\langle {p}_{\alpha}: \alpha < \delta \rangle$ be a sequence of conditions in ${\P}_{0}$ such that $\forall \alpha \leq \beta < \delta \[{p}_{\beta} \leq {p}_{\alpha}\]$.
 Define ${\A}_{{p}_{\delta}} = {\bigcup}_{\alpha < \delta} {{\A}_{{p}_{\alpha}}}$.
 For any $A \in {\A}_{{p}_{\delta}}$ let ${\alpha}_{A} = \min\{\alpha < \delta: A \in {\A}_{{p}_{\alpha}}\}$.
 For $A \in {\A}_{{p}_{\delta}}$ define ${\DD}_{{p}_{\delta}, A} = {\bigcup}_{{\alpha}_{A} \leq \alpha < \delta} {{\DD}_{{p}_{\alpha}, A}}$, and define ${\D}_{{p}_{\delta}} = \langle {\DD}_{{p}_{\delta}, A}: A \in {\A}_{{p}_{\delta}}\rangle$.
 Given $A, B \in {\A}_{{p}_{\delta}}$ with $A \; {\subset}^{\ast} \; B$, let ${\alpha}_{A, B} = \max\{{\alpha}_{A}, {\alpha}_{B}\}$, and define ${\pi}_{{p}_{\delta}, B, A} = {\pi}_{{p}_{{\alpha}_{A, B}}, B, A}$.
 Define ${\C}_{{p}_{\delta}} = \langle {\pi}_{{p}_{\delta}, B, A}: A, B \in {\A}_{{p}_{\delta}} \wedge A \; {\subset}^{\ast} \; B \rangle$.
 Finally define ${p}_{\delta} = \langle {\A}_{{p}_{\delta}}, {\C}_{{p}_{\delta}}, {\D}_{{p}_{\delta}} \rangle$.
 Then ${p}_{\delta} \in {\P}_{0}$ and $\forall \alpha < \delta\[{p}_{\delta} \leq {p}_{\alpha}\]$.
\end{Lemma}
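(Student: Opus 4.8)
The plan is to verify directly the two assertions: that ${p}_{\delta}$ satisfies clauses (1)--(4) of Definition \ref{def:0condition}, and that ${p}_{\delta} \leq {p}_{\alpha}$ in the sense of that definition for every $\alpha < \delta$. The structural fact driving everything is that since $\langle {p}_{\alpha} : \alpha < \delta\rangle$ is $\leq$-decreasing, along the chain the domains ${\A}_{{p}_{\alpha}}$ and the filters ${\DD}_{{p}_{\alpha}, A}$ are increasing (the latter for $\alpha \geq {\alpha}_{A}$), while the maps ${\pi}_{{p}_{\alpha}, B, A}$ are \emph{stable} once both $A$ and $B$ have appeared: for $\gamma \geq {\alpha}_{A, B}$ the ordering clause of Definition \ref{def:0condition} forces ${\pi}_{{p}_{\gamma}, B, A} = {\pi}_{{p}_{{\alpha}_{A, B}}, B, A}$. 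I would record this stability at the outset, since it guarantees that every expression defining ${p}_{\delta}$ is well defined and independent of the stage at which it is read off. The only place $\MA(\sigma-\textrm{centered})$ enters is through the regularity of $\c$, which I would invoke for the cardinality bounds below.

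For clause (1), $0, \omega \in {\A}_{{p}_{\delta}}$ is immediate, and $\lc {\A}_{{p}_{\delta}} \rc < \c$ follows from regularity of $\c$ since it is a union of $\delta < \c$ sets each of size $< \c$; distinctness modulo finite of two members $A \neq B$ is read off at the stage $\max\{{\alpha}_{A}, {\alpha}_{B}\}$ where both already lie in a common ${\A}_{{p}_{\alpha}}$. For clause (2), each ${\DD}_{{p}_{\delta}, A}$ is an increasing union of the non-principal filters ${\DD}_{{p}_{\alpha}, A}$ (for ${\alpha}_{A} \leq \alpha < \delta$), hence itself a non-principal filter; for the required cofinal family of size $< \c$ I would take ${\F}_{{p}_{\delta}, A} = {\bigcup}_{{\alpha}_{A} \leq \alpha < \delta} {\F}_{{p}_{\alpha}, A}$, which has size $< \c$ by regularity and is cofinal because any $X \in {\DD}_{{p}_{\delta}, A}$ already lies in some ${\DD}_{{p}_{\alpha}, A}$ and is thus refined by a member of ${\F}_{{p}_{\alpha}, A}$. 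Clause (3) is immediate from the stability remark, every ${\pi}_{{p}_{\delta}, B, A}$ being a genuine element of $\BS$.

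The one clause demanding a genuine argument is clause (4). Given $A \; {\subset}^{\ast} \; B$ in ${\A}_{{p}_{\delta}}$ and $X \in {\DD}_{{p}_{\delta}, B}$, I would fix $\alpha \geq {\alpha}_{B}$ with $X \in {\DD}_{{p}_{\alpha}, B}$ and put $\gamma = \max\{\alpha, {\alpha}_{A, B}\}$; then $A, B \in {\A}_{{p}_{\gamma}}$, $X \in {\DD}_{{p}_{\gamma}, B}$, and by stability ${\pi}_{{p}_{\gamma}, B, A} = {\pi}_{{p}_{\delta}, B, A}$, so clause (4) applied inside ${p}_{\gamma}$ gives ${\pi}_{{p}_{\delta}, B, A}'' X \in {\DD}_{{p}_{\gamma}, A} \subset {\DD}_{{p}_{\delta}, A}$. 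Finally, to verify ${p}_{\delta} \leq {p}_{\alpha}$ for a fixed $\alpha < \delta$: the inclusion ${\A}_{{p}_{\delta}} \supset {\A}_{{p}_{\alpha}}$ and the filter growth ${\DD}_{{p}_{\delta}, A} \supset {\DD}_{{p}_{\alpha}, A}$ (for $A \in {\A}_{{p}_{\alpha}}$, since $\alpha \geq {\alpha}_{A}$) are immediate, and agreement ${\pi}_{{p}_{\delta}, B, A} = {\pi}_{{p}_{\alpha}, B, A}$ on pairs drawn from ${\A}_{{p}_{\alpha}}$ follows once more from stability, as ${\alpha}_{A, B} \leq \alpha$. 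No step presents a serious difficulty; the only place demanding care is the simultaneous-stage argument for clause (4), where one must pass to a single $\gamma$ large enough to witness $X$, $A$, $B$, and the correct transition map all at once.
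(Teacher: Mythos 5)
Your proof is correct and is exactly the routine verification the paper has in mind — the paper explicitly declares this lemma "easy to check" and leaves the proof to the reader. The key points you isolate (stability of the transition maps along the chain, regularity of $\c$ under $\MA(\sigma-\textrm{centered})$ for the cardinality bounds, and passing to a single sufficiently large stage $\gamma$ to verify clause (4)) are precisely what the intended argument requires.
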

\begin{Lemma} \label{lem:cofomegalimitcondition}
Assume $\MA(\sigma-\textrm{centered})$.
 Let $\delta < \c$ be a limit ordinal with $\cf(\delta) = \omega$.
 Suppose $\langle {p}_{\alpha}: \alpha < \delta \rangle$ is a sequence of conditions in ${\P}_{1}$ such that $\forall \alpha \leq \beta < \delta\[{p}_{\beta} \leq {p}_{\alpha}\]$.
 Suppose ${p}_{\delta} \in {\P}_{0}$ is defined as in Lemma \ref{lem:limitconditiondef}.
 Then ${p}_{\delta} \in {\P}_{1}$.
\end{Lemma}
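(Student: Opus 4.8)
The plan is to verify membership in ${\P}_{1}$ directly from Definition \ref{def:1condition}: I fix an arbitrary finitary $p' \in {\P}_{0}$ with ${p}_{\delta} \leq p'$ and produce a single $q \in \Q$ inducing it. Since $p'$ is finitary, ${\A}_{p'}$ is finite, and because $\langle {\A}_{{p}_{\alpha}} : \alpha < \delta \rangle$ is increasing with union ${\A}_{{p}_{\delta}} \supseteq {\A}_{p'}$, I first fix ${\alpha}^{\ast} < \delta$ with ${\A}_{p'} \subseteq {\A}_{{p}_{{\alpha}^{\ast}}}$. For each $A \in {\A}_{p'}$ I use finitariness to choose a countable cofinal subfamily of ${\DD}_{p', A}$ and, by intersecting initial segments, replace it by a $\subseteq$-decreasing cofinal sequence $\langle {Y}^{A}_{k} : k \in \omega \rangle$. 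Since ${\DD}_{p', A} \subseteq {\DD}_{{p}_{\delta}, A} = {\bigcup}_{\alpha < \delta}{\DD}_{{p}_{\alpha}, A}$, each ${Y}^{A}_{k}$ lies in some ${\DD}_{{p}_{\gamma}, A}$; letting ${\beta}_{k} < \delta$ be the maximum of ${\alpha}^{\ast}$ and these finitely many indices as $A$ ranges over ${\A}_{p'}$, I obtain ${Y}^{A}_{k} \in {\DD}_{{p}_{{\beta}_{k}}, A}$ for every $A \in {\A}_{p'}$ at once. Note that for $A \; {\subset}^{\ast} \; B$ in ${\A}_{p'}$ both ${p}_{\delta} \leq p'$ and ${p}_{\delta} \leq {p}_{{\beta}_{k}}$ give ${\pi}_{p', B, A} = {\pi}_{{p}_{\delta}, B, A} = {\pi}_{{p}_{{\beta}_{k}}, B, A}$.

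Each ${p}_{{\beta}_{k}} \in {\P}_{1}$, so the Representation Lemma (Lemma \ref{lem:rep}) yields a standard sequence $\hat{q}_{k} \in \Q$ inducing ${p}_{{\beta}_{k}}$. I then build $q$ by a diagonal stitching across the $\hat{q}_{k}$, in the spirit of Remark \ref{rem:key} but harvesting successive creatures from different standard sequences. Recursively in $n$ I pick an index ${m}_{n}$ and set ${I}_{q, n} = {I}_{\hat{q}_{n}, {m}_{n}}$ and ${u}^{q, n} = {u}^{\hat{q}_{n}, {m}_{n}}$, taking ${m}_{n}$ so deep that simultaneously (with all comparisons to stage $n - 1$ vacuous when $n = 0$): (i) the domain ${I}_{\hat{q}_{n}, {m}_{n}}$, the creature values, and $\nor({u}^{\hat{q}_{n}, {m}_{n}})$ all lie strictly above the corresponding data at stage $n - 1$; (ii) ${u}^{\hat{q}_{n}, {m}_{n}}_{A \cap {I}_{\hat{q}_{n}, {m}_{n}}} \subseteq {Y}^{A}_{n}$ for every $A \in {\A}_{p'}$; (iii) $\lc A \cap {I}_{\hat{q}_{n}, {m}_{n}} \rc > \lc A \cap {I}_{q, n-1} \rc$ for every infinite $A$ in the finite Boolean algebra generated by ${\A}_{p'}$; and (iv) ${\pi}_{{p}_{{\beta}_{n}}, B, A} \restrict {u}^{\hat{q}_{n}, {m}_{n}}_{B \cap {I}_{\hat{q}_{n}, {m}_{n}}} = {\pi}_{\hat{q}_{n}, B \cap {I}_{\hat{q}_{n}, {m}_{n}}, A \cap {I}_{\hat{q}_{n}, {m}_{n}}}$ for all $A \; {\subset}^{\ast} \; B$ in ${\A}_{p'}$. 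Each clause is of the form ``holds for all sufficiently large ${m}_{n}$'': (i) because the locations, values and norms along $\hat{q}_{n}$ tend to infinity, and (ii)--(iv) because $\hat{q}_{n}$ induces ${p}_{{\beta}_{n}}$, so the relevant parts of Definition \ref{def:induce} hold for all but finitely many indices, using ${Y}^{A}_{n} \in {\DD}_{{p}_{{\beta}_{n}}, A}$ and ${\A}_{p'} \subseteq {\A}_{{p}_{{\beta}_{n}}}$. As there are finitely many clauses, a large enough ${m}_{n}$ works.

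It then remains to check that $q \in \Q$ and that $q$ induces $p'$. The clauses of Definition \ref{def:q} follow at once from (i). For Definition \ref{def:induce}(1), clause (iii) makes $\lc A \cap {I}_{q, n} \rc$ strictly increasing for every infinite $A$ in the algebra generated by ${\A}_{p'}$. For Definition \ref{def:induce}(2), given $A \in {\A}_{p'}$ and $X \in {\DD}_{p', A}$, cofinality gives $k$ with ${Y}^{A}_{k} \subseteq X$, and since the ${Y}^{A}_{n}$ decrease and ${u}^{q, n}_{A \cap {I}_{q, n}} \subseteq {Y}^{A}_{n}$ by (ii), we get ${u}^{q, n}_{A \cap {I}_{q, n}} \subseteq X$ for all $n \geq k$. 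For Definition \ref{def:induce}(3), clause (iv) together with ${\pi}_{p', B, A} = {\pi}_{{p}_{{\beta}_{n}}, B, A}$ gives ${\pi}_{p', B, A} \restrict {u}^{q, n}_{B \cap {I}_{q, n}} = {\pi}_{q, B \cap {I}_{q, n}, A \cap {I}_{q, n}}$ for all large $n$. Hence $q$ induces $p'$, and as $p'$ was an arbitrary finitary condition above ${p}_{\delta}$, we conclude ${p}_{\delta} \in {\P}_{1}$.

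The main obstacle is conceptual rather than computational. Because $\cf(\delta) = \omega$, the countably many sets ${Y}^{A}_{k}$ witnessing cofinality of the filters of $p'$ may be spread throughout the whole tower $\langle {p}_{\alpha} : \alpha < \delta \rangle$, so their indices ${\beta}_{k}$ can be cofinal in $\delta$ and there need be no single $\alpha < \delta$ for which $p'$ is ``captured'' by ${p}_{\alpha}$ alone. This is exactly why one application of the Representation Lemma cannot suffice and the creatures of $q$ must be drawn from infinitely many of the $\hat{q}_{k}$. The one technical point requiring care is that this harvesting can be performed without any shifting (Definition \ref{def:shift}): the domains and values of the various standard sequences may be placed freely and independently, so choosing each ${m}_{n}$ deep enough is all that is needed to interleave them into one legitimate standard sequence.
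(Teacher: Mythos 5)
Your proposal is correct and follows essentially the same strategy as the paper's proof: enumerate countable cofinal families for the filters of $p'$, locate each witness at some stage ${\beta}_{k} < \delta$, apply the Representation Lemma to ${p}_{{\beta}_{k}}$ to obtain standard sequences ${\hat{q}}_{k}$, and stitch a single $q$ together by taking one sufficiently deep creature from each ${\hat{q}}_{n}$ so that all finitely many threshold conditions (norms, positions, interval growth, filter membership, and agreement of the $\pi$-maps) are met. The only cosmetic difference is that you pre-process the cofinal families into decreasing sequences, whereas the paper keeps them enumerated and requires ${u}^{{q}_{n}, k}_{A \cap {I}_{{q}_{n}, k}} \subset {Y}_{A, i}$ for all $i < n$ at stage $n$.
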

\begin{proof}
 Take a finitary $p' \in {\P}_{0}$ with ${p}_{\delta} \leq p'$.
 For each $A \in {\A}_{p'}$ let ${\alpha}_{A}$ be defined as in Lemma \ref{lem:limitconditiondef}.
 For each $A \in {\A}_{p'}$, ${\F}_{p', A}$ is non-empty and countable; let $\{{Y}_{A, n}: n \in \omega\}$ enumerate ${\F}_{p', A}$.
 For each $A \in {\A}_{p'}$ and $n \in \omega$ choose ${\alpha}_{A} \leq {\alpha}_{A, n} < \delta$ such that ${Y}_{A, n} \in {\DD}_{{p}_{{\alpha}_{A, n}}, A}$.
 Find a strictly increasing cofinal sequence $\langle {\alpha}_{n}: n \in \omega \rangle$ of elements of $\delta$ such that ${\A}_{p'} \subset {\A}_{{p}_{{\alpha}_{0}}}$ and $\forall A \in {\A}_{p'} \forall i < n \[{\alpha}_{A, i} < {\alpha}_{n}\]$.
 Define a standard sequence $q$ as follows.
 Fix $n \in \omega$ and suppose that ${I}_{q, m}$ and ${u}^{q, m}$ are given for all $m < n$.
 Choose ${q}_{n} \in \Q$ inducing ${p}_{{\alpha}_{n}}$.
 We now define six collections of natural numbers as follows.
 First, let ${\BB}_{p'}$ denote the Boolean subalgebra of $\Pset(\omega)$ generated by ${\A}_{p'}$.
 If $A$ is an infinite member of ${\BB}_{p'}$, then there exists ${k}_{A} \in \omega$ such that $\forall k \geq {k}_{A}\[\lc A \cap {I}_{{q}_{n}, k}\rc < \lc A \cap {I}_{{q}_{n}, k + 1}\rc\]$.
 Define $\sup\{{k}_{A} + \lc {I}_{q, m} \cap A \rc + 1: m < n\} = {l}_{A}$.
 Second, say $A \in {\A}_{p'}$ and $i < n$.
 Then there exists ${l}_{A, i} \in \omega$ such that $\forall k \geq {l}_{A, i}\[{u}^{{q}_{n}, k}_{A \cap {I}_{{q}_{n}, k}} \subset {Y}_{A, i}\]$.
 Third, say $A, B \in {\A}_{p'}$ with $A \; {\subset}^{\ast} \; B$.
 Then there exists ${l}_{A, B} \in \omega$ such that $\forall k \geq {l}_{A, B}\[{\pi}_{{p}_{{\alpha}_{n}}, B, A} \restrict {u}^{{q}_{n}, k}_{B \cap {I}_{{q}_{n}, k}} = {\pi}_{{q}_{n}, B \cap {I}_{{q}_{n}, k}, A \cap {I}_{{q}_{n}, k}}\]$.
 Observe that since ${p}_{\delta} \leq {p}_{{\alpha}_{n}}$ and ${p}_{\delta} \leq p'$, ${\pi}_{{p}_{{\alpha}_{n}}, B, A} = {\pi}_{p', B, A}$.
 Fourth, define ${l}_{0} = \sup\{ \max({I}_{q, m}) + 1: m < n\}$.
 Fifth, let $\sup\{\nor({u}^{q, m}) + 1: m < n\} = {l}_{1}$.
 Sixth, define ${l}_{2} = \sup\left\{ \max({u}^{q, m}_{a}) + 1: m < n \wedge a \in \Pset({I}_{q, m})\right\}$.
 Now consider $M = \{{l}_{A}: A \in {\BB}_{p'} \wedge A \ \text{is infinite}\} \cup \{{l}_{A, i}: A \in {\A}_{p'} \wedge i < n\} \cup \{{l}_{A, B}: A, B \in {\A}_{p'} \wedge A \; {\subset}^{\ast} \; B \} \cup \{{l}_{0}, {l}_{1}, {l}_{2}\}$.
 $M$ is a finite non-empty subset of $\omega$.
 Let $l = \max(M)$.
 Then $l \in \omega$.
 Put ${I}_{q, n} = {I}_{{q}_{n}, l}$ and ${u}^{q, n} = {u}^{{q}_{n}, l}$.
 This completes the definition of $q$.
 It is easy to see that $q \in \Q$ and that $q$ induces $p'$.
 Therefore ${p}_{\delta} \in {\P}_{1}$.
\end{proof}
\begin{Lemma} \label{lem:unccoflimitcondition}
 Assume $\MA(\sigma-\textrm{centered})$.
 Let $\delta < \c$ be a limit ordinal with $\cf(\delta) > \omega$.
 Suppose $\langle {p}_{\alpha}: \alpha < \delta \rangle$ is a sequence of conditions in ${\P}_{1}$ such that $\forall \alpha \leq \beta < \delta\[{p}_{\beta} \leq {p}_{\alpha}\]$.
 Suppose ${p}_{\delta} \in {\P}_{0}$ is defined as in Lemma \ref{lem:limitconditiondef}.
 Then ${p}_{\delta} \in {\P}_{1}$.
\end{Lemma}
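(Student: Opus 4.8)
The plan is to verify directly that $p_\delta$ is a $1$-condition, i.e.\ to fix an arbitrary finitary $p' \in {\P}_{0}$ with $p_\delta \leq p'$ and produce some $q \in \Q$ inducing $p'$. The whole point of the uncountable-cofinality case is that \emph{no} new creature construction is needed, in contrast with Lemma \ref{lem:cofomegalimitcondition}: because $p'$ is finitary, all of the data witnessing $p_\delta \leq p'$ is countable and can be absorbed below a single stage $\alpha^\ast < \delta$. Once I have a single $\alpha^\ast < \delta$ with $p_{\alpha^\ast} \leq p'$, I am done, since $p_{\alpha^\ast} \in {\P}_{1}$ and $p'$ is finitary, so by the very definition of a $1$-condition $p'$ is induced by some $q \in \Q$.

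To find such an $\alpha^\ast$ I would argue as follows. Since $\A_{p'}$ is finite and $\A_{p_\delta} = \bigcup_{\alpha < \delta}\A_{p_\alpha}$ is an increasing union, there is $\alpha_0 < \delta$ with $\A_{p'} \subset \A_{p_{\alpha_0}}$. Next, for each $A \in \A_{p'}$ fix a countable generating family $\F_{p', A} \subset \DD_{p', A}$ as in the definition of finitary; since $\DD_{p', A} \subset \DD_{p_\delta, A} = \bigcup_{\alpha_A \leq \alpha < \delta}\DD_{p_\alpha, A}$ by Lemma \ref{lem:limitconditiondef}, each $Y \in \F_{p', A}$ lies in $\DD_{p_{\alpha(A, Y)}, A}$ for some $\alpha_A \leq \alpha(A, Y) < \delta$. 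As $\A_{p'}$ is finite and each $\F_{p', A}$ is countable, the collection of all these ordinals $\alpha(A, Y)$, together with the finitely many $\alpha_{A, B}$ attached to the maps in Lemma \ref{lem:limitconditiondef}, is a countable subset of $\delta$; since $\cf(\delta) > \omega$ it is bounded by some $\alpha^\ast < \delta$, which I also take to satisfy $\alpha^\ast \geq \alpha_0$.

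It then remains to check $p_{\alpha^\ast} \leq p'$, which is routine given the monotonicity built into the ordering on ${\P}_{0}$ together with the definition of $p_\delta$. The inclusion $\A_{p'} \subset \A_{p_{\alpha^\ast}}$ is immediate. For the maps, whenever $A, B \in \A_{p'}$ with $A \; {\subset}^{\ast} \; B$, I have $\pi_{p_{\alpha^\ast}, B, A} = \pi_{p_{\alpha_{A, B}}, B, A} = \pi_{p_\delta, B, A} = \pi_{p', B, A}$, using that $\alpha^\ast \geq \alpha_{A, B}$ and that the ordering preserves the maps. For the filters, given $A \in \A_{p'}$ and $X \in \DD_{p', A}$, I pick $Y \in \F_{p', A}$ with $Y \subset X$; then $Y \in \DD_{p_{\alpha(A, Y)}, A} \subset \DD_{p_{\alpha^\ast}, A}$ since $p_{\alpha^\ast} \leq p_{\alpha(A, Y)}$, and upward closure of the filter gives $X \in \DD_{p_{\alpha^\ast}, A}$. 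Hence $p_{\alpha^\ast} \leq p'$, and the argument concludes as described.

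I do not expect any serious obstacle in this lemma. The only point requiring care is the bounding step: one must make sure that \emph{every} piece of data needed to witness $p_{\alpha^\ast} \leq p'$ — the finitely many sets in $\A_{p'}$, the finitely many maps, and the countably many generators of the filters $\DD_{p', A}$ — appears below one $\alpha^\ast < \delta$. This is exactly what the finitariness of $p'$ and the assumption $\cf(\delta) > \omega$ deliver, and it is precisely the feature that distinguishes this case from the countable-cofinality case handled by Lemma \ref{lem:cofomegalimitcondition}.
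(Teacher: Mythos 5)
Your proof is correct and follows essentially the same route as the paper's: both exploit $\cf(\delta) > \omega$ to bound the (countable, by finitariness of $p'$) data witnessing $p_\delta \leq p'$ below a single $\alpha^\ast < \delta$ with $p_{\alpha^\ast} \leq p'$. The only cosmetic difference is at the end, where the paper invokes the Representation Lemma to obtain a $q$ inducing $p_{\alpha^\ast}$ and then notes that this $q$ also induces $p'$, whereas you apply the definition of a $1$-condition to $p_{\alpha^\ast}$ and $p'$ directly; your write-up also supplies the bounding details that the paper leaves implicit.
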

\begin{proof}
 Take a finitary $p' \in {\P}_{0}$ with ${p}_{\delta} \leq p'$.
 Since $\cf(\delta) > \omega$, there is $\alpha < \delta$ such that ${p}_{\alpha} \leq p'$.
 There is a $q \in \Q$ such that $q$ induces ${p}_{\alpha}$. This $q$ also induces $p'$.
 Hence ${p}_{\delta} \in {\P}_{1}$. 
\end{proof}
We are now ready to prove the main theorem.
We construct a set $\XX$ of representatives for the equivalence classes in $\Pset(\omega) \slash \FIN$ and index the ultrafilters by members of $\XX$.
\begin{Theorem} \label{thm:main}
 Assume $\MA(\sigma-\textrm{centered})$.
 There exists a set $\XX \subset \Pset(\omega)$ and a sequence $\langle {\U}_{A}: A \in \XX \rangle$ such that the following hold:
 \begin{enumerate}
  \item
  $\forall A, B \in \XX \[A \neq B \implies A \; {\neq}^{\ast} \; B\]$ and $\forall C \in \Pset(\omega) \exists A \in \XX\[C \; {=}^{\ast} \; A\]$;
  \item
  for each $A \in \XX$, ${\U}_{A}$ is a P-point;
  \item
  $\forall A, B \in \XX\[A \; {\subset}^{\ast} \; B \implies {\U}_{A} \; {\leq}_{RK} \; {\U}_{B}\]$;
  \item
  $\forall A, B \in \XX \[B \; {\not\subset}^{\ast} \; A \implies {\U}_{B} \; {\nleq}_{T} \; {\U}_{A}\]$.
 \end{enumerate}
\end{Theorem}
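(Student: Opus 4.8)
The plan is to build, by transfinite recursion of length $\c$, a $\leq$-decreasing sequence $\langle {p}_{\xi} : \xi < \c \rangle$ of conditions in ${\P}_{1}$, and then to read $\XX$ and the ultrafilters off from its ``union''. Setting $\XX = {\bigcup}_{\xi < \c} {\A}_{{p}_{\xi}}$ and, for $A \in \XX$, ${\U}_{A} = \bigcup \{{\DD}_{{p}_{\xi}, A} : A \in {\A}_{{p}_{\xi}}\}$, a bookkeeping argument will be used to meet three families of requirements: (i) for every $C \in \Pset(\omega)$, an application of Lemma \ref{lem:main1} that places a representative of $[C]$ into some ${\A}_{{p}_{\xi}}$; (ii) for every $A \in \XX$ and every $b \subset \omega$, an application of Lemma \ref{lem:ultra} that forces $b$ or $\omega \setminus b$ into ${\DD}_{{p}_{\xi}, A}$; and (iii) for every $A, B \in \XX$ with $B \; {\not\subset}^{\ast} \; A$ and every pair $(P, f)$, with $P \subset {\[\omega\]}^{<\omega} \setminus \{0\}$ and $f : P \to \omega$ satisfying clauses (1)--(2) of Lemma \ref{lem:localmap}, an application of Lemma \ref{lem:killingtukey} that yields $X \in {\DD}_{{p}_{\xi}, A}$ and $Y \in {\DD}_{{p}_{\xi}, B}$ admitting no $s \in P$ with $s \subset X$ and $f(s) \in Y$. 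Since there are $\c$ subsets of $\omega$, $\c$ indices, and only $\c$ localizing pairs $(P, f)$ (each coded by a real), all three families have size $\c$; as $\c$ is regular under $\MA(\sigma-\textrm{centered})$, they can be enumerated and interleaved in $\c$ steps, with the harmless constraint that a task mentioning an index $A$ is serviced only after $A$ has entered some ${\A}_{{p}_{\eta}}$.

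The recursion itself is routine given the earlier lemmas. I would take ${p}_{0}$ from Lemma \ref{lem:nonempty}; at a successor stage I apply whichever of Lemmas \ref{lem:main1}, \ref{lem:ultra}, \ref{lem:killingtukey} the bookkeeping dictates, each of which returns a condition again in ${\P}_{1}$; and at a limit $\delta$ I form ${p}_{\delta}$ by Lemma \ref{lem:limitconditiondef} and invoke Lemma \ref{lem:cofomegalimitcondition} or Lemma \ref{lem:unccoflimitcondition}, according to $\cf(\delta)$, to conclude ${p}_{\delta} \in {\P}_{1}$. Clause (1) of the theorem is then immediate from (i) together with clause (1) of Definition \ref{def:0condition}. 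Clause (3) is built into the conditions: clause (4) of Definition \ref{def:0condition} gives ${\pi}_{{p}_{\xi}, B, A}''X \in {\DD}_{{p}_{\xi}, A}$ whenever $X \in {\DD}_{{p}_{\xi}, B}$ and $A \; {\subset}^{\ast} \; B$, and since the ordering on ${\P}_{0}$ freezes the maps once both indices are present, the stabilized map ${\pi}_{B, A}$ satisfies $\forall X \in {\U}_{B}\[{\pi}_{B, A}''X \in {\U}_{A}\]$; as ${\U}_{A}, {\U}_{B}$ are ultrafilters this witnesses ${\U}_{A} \; {\leq}_{RK} \; {\U}_{B}$.

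For the P-point property (clause (2)) I would exploit the structure of the generators rather than the game. Every successor step redefines all filters ${\DD}_{p, A}$ ($A \in {\A}_{p}$) through Remark \ref{rem:key} (or directly in Lemma \ref{lem:main1}), making each principal modulo finite with generator ${X}^{\xi}_{A} = {\bigcup}_{n} {u}^{{q}_{\xi}, n}_{A \cap {I}_{{q}_{\xi}, n}}$. Given countably many ${b}_{m} \in {\U}_{A}$, choose ${\xi}_{m}$ with ${b}_{m} \in {\DD}_{{p}_{{\xi}_{m}}, A}$ and put $\eta = (\sup_{m} {\xi}_{m}) + 1 < \c$. Then $\eta$ is a successor, so ${X}^{\eta}_{A} \in {\DD}_{{p}_{\eta}, A} \subset {\U}_{A}$, and since ${\DD}_{{p}_{\eta}, A} = \{a : {X}^{\eta}_{A} \; {\subset}^{\ast} \; a\} \supset {\DD}_{{p}_{{\xi}_{m}}, A} \ni {b}_{m}$, I get ${X}^{\eta}_{A} \; {\subset}^{\ast} \; {b}_{m}$ for every $m$. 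Thus every countable subfamily of ${\U}_{A}$ has a pseudo-intersection inside ${\U}_{A}$, which is exactly the P-point property.

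Finally, clause (4) carries the genuine content. Suppose for contradiction that $B \; {\not\subset}^{\ast} \; A$ yet ${\U}_{B} \; {\leq}_{T} \; {\U}_{A}$, so there is a monotone $\phi : {\U}_{A} \to {\U}_{B}$ cofinal in ${\U}_{B}$. As ${\U}_{A}$ is a P-point, Lemma \ref{lem:localmap} converts $\phi$ into a pair $(P, f)$ satisfying its clauses (1)--(3); clauses (1)--(2) place $(P, f)$ among the localizing pairs enumerated for $(A, B)$ in family (iii). At the stage $\xi$ where that task was serviced I have $X \in {\DD}_{{p}_{\xi}, A} \subset {\U}_{A}$ and $Y \in {\DD}_{{p}_{\xi}, B} \subset {\U}_{B}$ with no $s \in P$ satisfying $s \subset X$ and $f(s) \in Y$, directly contradicting clause (3) of Lemma \ref{lem:localmap} taken with $a = X$ and $b = Y$. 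The main obstacle in assembling the argument is precisely this step: recognizing through Lemma \ref{lem:localmap} that an arbitrary Tukey reduction between the not-yet-constructed ultrafilters is captured by one of only $\c$-many \emph{reals} $(P, f)$, so that the single-step diagonalization of Lemma \ref{lem:killingtukey} can be scheduled against all of them. The remaining difficulty is the careful transfinite bookkeeping needed to keep the dynamically appearing indices and pairs synchronized with their tasks; everything else reduces to the lemmas already established.
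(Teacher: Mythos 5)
Your construction is, in outline, exactly the paper's: a decreasing $\c$-sequence in ${\P}_{1}$ with bookkeeping families for Lemmas \ref{lem:main1}, \ref{lem:ultra}, and \ref{lem:killingtukey}, limit stages handled by Lemmas \ref{lem:limitconditiondef}, \ref{lem:cofomegalimitcondition}, and \ref{lem:unccoflimitcondition}, clause (3) read off from the frozen maps ${\pi}_{p, B, A}$ via clause (4) of Definition \ref{def:0condition}, and clause (4) obtained by feeding the pairs $(P, f)$ supplied by Lemma \ref{lem:localmap} into Lemma \ref{lem:killingtukey}. All of that is correct and matches the paper.

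The one genuine gap is in your verification of the P-point property. You assert that \emph{every} successor step resets every ${\DD}_{p, A}$ to be principal modulo finite, and you then take $\eta = (\sup_{m} {\xi}_{m}) + 1$ and use that ${\DD}_{{p}_{\eta}, A} = \{a : {X}^{\eta}_{A} \; {\subset}^{\ast} \; a\}$. This fails at vacuous successor steps: your own recursion (like the paper's) must sometimes set ${p}_{\xi + 1} = {p}_{\xi}$, e.g.\ when Lemma \ref{lem:main1} is invoked for a $C$ that already has a representative ("there is nothing to do"), or when a scheduled task's preconditions are not met. If $\sup_{m} {\xi}_{m}$ is a limit ordinal $\delta$ and stage $\delta + 1$ is vacuous, then ${\DD}_{{p}_{\delta + 1}, A} = {\DD}_{{p}_{\delta}, A}$ is a union of fewer than $\c$ earlier filters and need not have a single ${\subset}^{\ast}$-least generator, so the displayed equality is simply false and your pseudo-intersection disappears. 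The repair is easy and takes one of two forms: either observe that the \emph{non-vacuous} successor stages (each of which does route through Remark \ref{rem:key} or the analogous step of Lemma \ref{lem:main1} and resets all the filters at once) are cofinal in $\c$, and pick $\eta$ among those above $\sup_{m} {\xi}_{m}$; or do what the paper does and reserve a fourth cofinal family of stages at which Lemma \ref{lem:ppoint1} --- the one lemma your proposal never uses, and which exists precisely for this purpose --- is applied to produce, for every $A \in {\A}_{p}$ simultaneously, a set ${Y}_{A} \in {\DD}_{p, A}$ with ${Y}_{A} \; {\subset}^{\ast} \; X$ for all $X \in {\DD}_{p, A}$. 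With either fix the argument closes.
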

\begin{proof}
 Let $\c = {T}_{0} \cup {T}_{1} \cup {T}_{2} \cup {T}_{3}$ be a partition of $\c$ into four disjoint pieces each of size $\c$.
 Let $\langle {A}_{\alpha}: \alpha \in {T}_{0} \rangle$ be an enumeration of $\Pset(\omega)$.
 Let $\langle \langle {A}_{\alpha}, {X}_{\alpha} \rangle: \alpha \in {T}_{1} \rangle$ enumerate $\Pset(\omega) \times \Pset(\omega)$ in such a way that each element of $\Pset(\omega) \times \Pset(\omega)$ occurs $\c$ times on the list. 
 Let $\T$ = 
 \begin{align*}
 \left\{\langle P, f \rangle: P \subset {\[\omega\]}^{< \omega} \setminus \{0\} \ \text{and} \ f: P \rightarrow \omega \ \text{satisfy (1)-(2) of Lemma \ref{lem:localmap}} \right\}.
 \end{align*}
 Let $\langle \langle {A}_{\alpha}, {B}_{\alpha}, {P}_{\alpha}, {f}_{\alpha} \rangle: \alpha \in {T}_{2}\rangle$ enumerate $\Pset(\omega) \times \Pset(\omega) \times \T$ in such a way that every element of $\Pset(\omega) \times \Pset(\omega) \times \T$ occurs $\c$ times on the list.
 Build a decreasing sequence $\langle {p}_{\alpha}: \alpha < \c \rangle$ of conditions in ${\P}_{1}$ by induction as follows.
 Since ${\P}_{1}$ is non-empty choose an arbitrary ${p}_{0} \in {\P}_{1}$.
 If $\delta < \c$ is a limit ordinal, then by Lemmas \ref{lem:cofomegalimitcondition} and \ref{lem:unccoflimitcondition} there is a ${p}_{\delta} \in {\P}_{1}$ such that $\forall \alpha < \delta\[{p}_{\delta} \leq {p}_{\alpha}\]$.
 Now suppose $\delta = \alpha + 1$.
 If $\alpha \in {T}_{0}$, then use Lemma \ref{lem:main1} to find ${p}_{\delta} \in {\P}_{1}$ such that ${p}_{\delta} \leq {p}_{\alpha}$ and $\exists C \in {\A}_{{p}_{\delta}}\[{A}_{\alpha} \; {=}^{\ast} \; C\]$.
 If $\alpha \in {T}_{1}$ and ${A}_{\alpha} \in {\A}_{{p}_{\alpha}}$, then use Lemma \ref{lem:ultra} to find ${p}_{\delta} \in {\P}_{1}$ such that ${p}_{\delta} \leq {p}_{\alpha}$ and either ${X}_{\alpha} \in {\DD}_{{p}_{\delta}, {A}_{\alpha}}$ or $\omega \setminus {X}_{\alpha} \in {\DD}_{{p}_{\delta}, {A}_{\alpha}}$.
 If ${A}_{\alpha} \notin {\A}_{{p}_{\alpha}}$, then let ${p}_{\delta} = {p}_{\alpha}$.
 Next, suppose $\alpha \in {T}_{2}$, ${A}_{\alpha}, {B}_{\alpha} \in {\A}_{{p}_{\alpha}}$, and that ${B}_{\alpha} \; {\not\subset}^{\ast} \; {A}_{\alpha}$.
 Use Lemma \ref{lem:killingtukey} to find ${p}_{\delta} \in {\P}_{1}$ such that ${p}_{\delta} \leq {p}_{\alpha}$ and there exist ${X}_{\alpha} \in {\DD}_{{p}_{\delta}, {A}_{\alpha}}$ and ${Y}_{\alpha} \in {\DD}_{{p}_{\delta}, {B}_{\alpha}}$ such that $\forall s \in {P}_{\alpha}\[s \subset {X}_{\alpha} \implies {f}_{\alpha}(s) \notin {Y}_{\alpha}\]$.
 If $\alpha \in {T}_{2}$, but the other conditions are not satisfied, then let ${p}_{\delta} = {p}_{\alpha}$.
 Finally if $\alpha \in {T}_{3}$, then use Lemma \ref{lem:ppoint1} to find ${p}_{\delta} \in {\P}_{1}$ such that ${p}_{\delta} \leq {p}_{\alpha}$ and $\forall A \in {\A}_{{p}_{\delta}} \exists {Y}_{A, \alpha} \in {\DD}_{{p}_{\delta}, A} \forall X \in {\DD}_{{p}_{\delta}, A}\[ {Y}_{A, \alpha} \; {\subset}^{\ast} \; X\]$.
 This concludes the construction of $\langle {p}_{\alpha}: \alpha < \c \rangle$.
 
 Now define $\XX = {\bigcup}_{\alpha < \c} {{\A}_{{p}_{\alpha}}}$.
 It is easy to check that (1) holds.
 For any $A \in \XX$, let ${\alpha}_{A} = \min\{\alpha < \c: A \in {\A}_{{p}_{\alpha}}\}$.
 Define ${\U}_{A} = {\bigcup}_{{\alpha}_{A} \leq \alpha < \c} {{\DD}_{{p}_{\alpha}, A}}$.
 It is easy to check that ${\U}_{A}$ is a P-point.
 Next, say $A, B \in \XX$ with $A \; {\subset}^{\ast} \; B$.
 Let ${\alpha}_{A, B} = \max\{{\alpha}_{A}, {\alpha}_{B}\} < \c$.
 Define ${\pi}_{B, A} = {\pi}_{{p}_{{\alpha}_{A, B}}, B, A} \in \BS$.
 It is easy to check that if $X \in {\U}_{B}$, then ${\pi}_{B, A}'' X \in {\U}_{A}$.
 This implies that ${\U}_{A} \; {\leq}_{RK} \; {\U}_{B}$.
 Finally suppose $A, B \in \XX$ and that $B \; {\not\subset}^{\ast} \; A$.
 Suppose for a contradiction that ${\U}_{B} \; {\leq}_{T} \; {\U}_{A}$.
 Applying Lemma \ref{lem:localmap} with $\VV = {\U}_{A}$ and $\U = {\U}_{B}$ we can find $P \subset {\[\omega\]}^{< \omega} \setminus \{0\}$ and $f: P \rightarrow \omega$ satisfying (1)-(3) of Lemma \ref{lem:localmap}.
 There exists $\alpha \in {T}_{2}$ such that ${\alpha}_{A, B} \leq \alpha$ and ${A}_{\alpha} = A$, ${B}_{\alpha} = B$, ${P}_{\alpha} = P$, and ${f}_{\alpha} = f$.
 Let $\delta = \alpha + 1$.
 Then by construction there exist ${X}_{\alpha} \in {\DD}_{{p}_{\delta}, A} \subset {\U}_{A} = \VV$ and ${Y}_{\alpha} \in {\DD}_{{p}_{\delta}, B} \subset {\U}_{B} = \U$ such that $\forall s \in P\[s \subset {X}_{\alpha} \implies f(s) \notin {Y}_{\alpha}\]$, contradicting (3) of Lemma \ref{lem:localmap}.
 This concludes the proof of the theorem.
\end{proof}
\section{Remarks and open questions} \label{sec:questions}
 Under $\MA(\sigma-\textrm{centered})$ there are ${2}^{\c}$ P-points.
 Our results here leave open the question of which partial orders of size greater than $\c$ can be embedded into the P-points.
 As pointed out in the introduction, each P-point can have at most $\c$ predecessors with respect to ${\leq}_{RK}$ and also with respect to ${\leq}_{T}$.
 \begin{Def} \label{def:locallyc}
  A partial order $\langle X, < \rangle$ is said to be \emph{locally of size $\c$} if for each $x \in X$, $\lc \{x' \in X: x' \leq x \} \rc \leq \c$.
 \end{Def}
 \begin{Question} \label{q:blassremainder}
  Suppose $\MA(\sigma-\textrm(centered))$ holds.
  Let $\langle X, < \rangle$ be a partial order of size at most ${2}^{\c}$ that is locally of size $\c$.
  Does $\langle X, < \rangle$ embed into the class of P-points with respect to both the Rudin-Keisler and Tukey orders? 
 \end{Question}
 A positive answer to Question \ref{q:blassremainder} will give a complete solution to Blass' Question \ref{q:mainq}.
 It would say that anything that could possibly embed into the P-points does.
 As we have mentioned in the introduction, we are able to modify the techniques in this paper to deal with some specific cases of Question \ref{q:blassremainder}, like when $\langle X, < \rangle$ is the ordinal $\langle {\c}^{+}, \in \rangle$.
 However a general solution may require some new ideas.
\def\polhk#1{\setbox0=\hbox{#1}{\ooalign{\hidewidth
  \lower1.5ex\hbox{`}\hidewidth\crcr\unhbox0}}}
\providecommand{\bysame}{\leavevmode\hbox to3em{\hrulefill}\thinspace}
\providecommand{\MR}{\relax\ifhmode\unskip\space\fi MR }
\providecommand{\MRhref}[2]{%
  \href{http://www.ams.org/mathscinet-getitem?mr=#1}{#2}
}
\providecommand{\href}[2]{#2}

\end{document}